\pgfplotsset{compat=1.14}
\title{\protect{A fractional version of Rivi\`ere's {GL(n)}-gauge}}
\author{Francesca Da Lio}
\address[Francesca Da Lio]{Department of Mathematics, ETH Z\"urich, R\"amistrasse 101, 8092 Z\"urich, Switzerland}
\email{francesca.dalio@math.ethz.ch}
\author{Katarzyna Mazowiecka}
\address[Katarzyna Mazowiecka]{\change{
Lehrstuhl f\"{u}r Angewandte Analysis, RWTH Aachen University, Pontdriesch 14-16, 52062 Aachen, germany}}
\email{\change{mazowiecka@math1.rwth-aachen.de }}
\author{Armin Schikorra}
\address[Armin Schikorra]{Department of Mathematics,
University of Pittsburgh,
301 Thackeray Hall,
Pittsburgh, PA 15260, USA}
\email{armin@pitt.edu}
\def\eps{\varepsilon}
\def\vp{\varphi}
\def\N{{\mathbb N}}
\newcommand{\dif}{\,\mathrm{d}}
\newcommand{\dx}{\dif x}
\newcommand{\dy}{\dif y}
\newtheorem{theorem}{Theorem}
\newtheorem{lemma}[theorem]{Lemma}
\newtheorem{corollary}[theorem]{Corollary}
\newtheorem{proposition}[theorem]{Proposition}
\theoremstyle{definition}
\def\dist{{\rm dist\,}}
\def\lip{{\rm Lip\,}}
\def\supp{{\rm supp\,}}
\newcommand{\R}{\mathbb{R}}
\newcommand{\brac}[1]{\left (#1 \right )}
\newcommand{\abs}[1]{\left |#1 \right |}
\newcommand{\Ep}{\bigwedge\nolimits}
\newcommand{\barint}{
\rule[.036in]{.12in}{.009in}\kern-.16in \displaystyle\int }
\newcommand{\barcal}{\mbox{$ \rule[.036in]{.11in}{.007in}\kern-.128in\int $}}
\def\mvint_#1{\mathchoice
          {\mathop{\vrule width 6pt height 3 pt depth -2.5pt
                  \kern -8pt \intop}\nolimits_{\kern -3pt #1}}%
          {\mathop{\vrule width 5pt height 3 pt depth -2.6pt
                  \kern -6pt \intop}\nolimits_{#1}}%
          {\mathop{\vrule width 5pt height 3 pt depth -2.6pt
                  \kern -6pt \intop}\nolimits_{#1}}%
          {\mathop{\vrule width 5pt height 3 pt depth -2.6pt
                  \kern -6pt \intop}\nolimits_{#1}}}
\numberwithin{theorem}{section} \numberwithin{equation}{section}
\renewcommand{\div}{\operatorname{div}}
\newcommand{\lap}{\Delta }
\newcommand{\aleq}{\precsim}
\newcommand{\ageq}{\succsim}
\newcommand{\aeq}{\approx}
\newcommand{\laps}[1]{(-\lap)^{\frac{#1}{2}}}
\newcommand{\lapv}{(-\lap)^{\frac{1}{4}}}
\newcommand{\laph}{(-\lap)^{\frac{1}{2}}}
\definecolor{chameleongreen1}{RGB}{98,189,25}
\newcommand{\change}[1]{{#1}}
\begin{document}
\begin{abstract}
We prove that for antisymmetric vectorfield $\Omega$ with small $L^2$-norm there exists a gauge $A \in L^\infty \cap \dot{W}^{1/2,2}(\R^1,GL(N))$ such that 
\[
 \div_{\frac12} (A\Omega- d_{\frac{1}{2}} A) = 0.
\]
This extends a celebrated theorem by Rivi\`ere   to the nonlocal case and provides conservation laws for a class of nonlocal equations with antisymmetric potentials, as well as stability under weak convergence. 
\end{abstract}

\sloppy
\keywords{fractional divergence, fractional div-curl lemma, fractional harmonic maps}
\subjclass[2010]{42B37, 42B30, 35R11, 58E20, 35B65}
\sloppy
\maketitle

\tableofcontents
\section{Introduction}
In the celebrated work \cite{R07} Rivi\`ere showed that for two-dimensional disks $D \subset \R^2$ for any $\Omega \in L^2(D,so(N)\otimes \Ep^1 \R^2)$, i.e., $\Omega_{ij}=-\Omega_{ji} \in L^2(D,\Ep^1 \R^2)$ there exists a $GL(N)$-gauge, namely a matrix-valued function $A,A^{-1} \in L^\infty \cap W^{1,2}(D,GL(N))$ such that 
\[
\div(A\Omega - \nabla A) = 0.
\]
These are distortions of the orthonormal Uhlenbeck's Coulomb gauges, \cite{U82}, namely $P  \in L^\infty\cap W^{1,2}(D,SO(N))$ which satisfy
\[
 \div(P\Omega P^t - P\change{^t}\nabla P) = 0.
\]
As Rivi\`ere showed in \cite{R07}, the $GL(N)$-gauges have the advantage that they can transform equations of the form
\begin{equation}\label{eq:rivsys}
 -\lap u = \Omega \cdot \nabla u
\end{equation}
into a conservation law 
\[
 \div(A \nabla u)=\div((\nabla A - A\Omega)u).
\]
This is important since \eqref{eq:rivsys} is the structure of the equation for harmonic maps, $H$-surfaces, and more generally the Euler-Lagrange equations of a large class of conformally invariant variational functionals. The $GL(N)$-gauge transform allows for regularity theory and the study of weak convergence \cite{R07}, it also is an important tool for energy quantization, see \cite{LR14}.

In recent years a theory of fractional harmonic maps has developed, beginning with the work by Rivi\`ere and the first named author, \cite{DLR11,DLR11b}. 
 bubbling analysis was initiated in \cite{DL15}. Fractional harmonic maps have a variety of applications: they appear as free boundary of minimal surfaces or harmonic maps \cite{M11,MS15,S18,DLP20}, they are also related to nonlocal minimal surfaces \cite{MSW19} and to knot energies \cite{BRS16,BRS19}.\par
 
\change{We recall that  in \cite{DLR11} the first named author and Rivi\`ere considered nonlocal Sch\"odinger type systems of the form
  \begin{equation}\label{eqabstr}
 (-\Delta)^{\frac{1}{4}} v=\Omega v\quad\quad\mbox{in  ${\mathcal{D}}'(\R),$} \
 \end{equation}
where $\Omega$ is an  antisymmetric potential in $L^2(\R,so(N))$, $v\in  L^2(\R,{\R}^N)$. The main technique to  establish the sub-criticality of systems \eqref{eqabstr}   is to perform  a {\it change of gauge} by rewriting them
 after having multiplied $v$ by a well chosen rotation valued map $P\in \dot{W}^{1/2,2}({\R},SO(N))$ which is ''integrating'' $\Omega$ in an optimal
 way. 
The key point in \cite{DLR11,DLR11b} was  the discovery of   particular algebraic structures (three-term commutators) that play the role
 of the Jacobians in the case of local systems in $2$-D with an antisymmetric potential   and that enjoy suitable integrability by compensations properties.
In \cite{MS18} the second and the third named authors introduced a new approach to fractional harmonic maps by 
considering  nonlocal systems with an antisymmetric potential which is seen itself as a nonlocal operator.  As we will explain later such an approach is similar in the spirit to that introduced by H\'elein  in \cite{H90} in the context of harmonic maps.}

It  begins with the definition of ``nonlocal one forms''. $F \in L^p(\Ep_{od}^1 \R^n)$ if $F: \R^n \times \R^n \to \R$ and 
\[
 \int_{\R^n}\int_{\R^n} |F(x,y)|^p\, \frac{\dx \dy}{|x-y|^n} <\infty.
\]
The $s$-differential, which takes function $u: \R^n \to \R$ into 1-forms, is then given by
\[
 d_s u(x,y) \coloneqq \frac{u(x)-u(y)}{|x-y|^s}.
\]
The scalar product for two $1$-forms, $F \in L^p(\Ep_{od}^1 \R^n)$ and $G \in L^{p'}(\Ep_{od}^1 \R^n)$ is then given by
\[
 F\cdot G(x) = \int_{\R^n} F(x,y) G(x,y) \frac{dy}{|x-y|^n}.
\]
The fractional divergence $\div_s$, which takes 1-forms into functions, is then the formal adjoint to $d_s$, namely
\[
 \div_s F[\varphi] \coloneqq \int_{\R^n} F \cdot d_s \varphi \quad \forall \varphi \in C_c^\infty(\R^n).
\]
For more details we refer to \Cref{s:preliminaries}. With this notation in mind we now consider equations of the form
\begin{equation}\label{eq:antisympde}
 \div_{\frac{1}{2}} (d_{\frac{1}{2}} u) = \Omega \cdot d_{\frac{1}{2}} u\quad \text{in $\R$} ,
\end{equation}
or in index form
\[
 \div_{\frac{1}{2}} (d_{\frac{1}{2}} u^i) = \sum_{j=1}^N \Omega_{ij} \cdot d_{\frac{1}{2}} u^j \quad \text{in $\R$,\quad $i=1,\ldots,N$},
\]
where $u \in (L^2 + L^\infty)\cap \dot W^{\frac{1}{2},2}(\R,\R^N)$ and $\Omega_{ij} = - \Omega_{ji} \in L^2(\Ep^1_{od} \R)$.

The main observation in \cite{MS18} is that the above notation and the above equation are not merely some random definitions of only analytical interest. Rather it was shown that the role of \eqref{eq:antisympde} for fractional harmonic maps is similar to the role of \eqref{eq:rivsys} for harmonic maps. In \cite{MS18} it was shown that there exists a $div-curl$ Lemma in the spirit of \cite{CLMS93}, that fractional harmonic maps into spheres satisfy a conservation law in the spirit of \cite{H90}, and that fractional harmonic maps into spheres essentially satisfy equations of the form \eqref{eq:antisympde}, in the spirit of \cite{R07}, and that an analogue of Uhlenbeck's gauge exist. In \cite{MPS20} this argument was further pushed to equations of stationary harmonic map in higher dimensional domains.

\change{We mention that in \cite{DLR21} the authors found quasi conservation laws  for  nonlocal Schr\"odinger type systems  of the form
\begin{equation}\label{modelsystemold}
(-\Delta)^{1/4} v=\Omega v+g(x)\end{equation}
where $v\in L^2(\R)$, $\Omega\in L^2(\R,so(N))$, and $g$ is a tempered distribution.
As we have already pointed out above  systems \eqref{modelsystemold}
 represent a particular case of systems \eqref{eq:antisympde} studied in the present paper in the sense that the  antisymmetric potential $\Omega$ in
 \eqref{modelsystemold} is    a pointwise function.
The  conservation laws found  in  \cite{DLR21} are a consequence of a 
  stability property of some three-term commutators by the multiplication of $P \in SO(N)$ and also of  the regularity results obtained
previously for such commutators.
The reformulation of \eqref{modelsystemold} in terms of conservation laws has permitted 
 to get the  quantization  in the neck regions of the $L^2$ norms of the negative part of sequences of solutions to    systems of the type \eqref{modelsystemold}.}

\change{
 The conservation laws that we obtain  in the current paper are more similar in the spirit to those found in the paper \cite{R07} for harmonic maps and concern nonlocal systems \eqref{eq:antisympde} where the antisymmetric potential acts in general as a nonlocal operator.
We hope this technique to be as useful for the   question of concentration compactness and  energy quantization for systems
 as it was in the local case in \cite{LR14}, a question we will study in a future work.}

Applying a gauge $A \in L^\infty \cap \dot{W}^{\frac{1}{2},2}$ to the equation \eqref{eq:antisympde} we find (see \Cref{le:reformulateOmegatoA}),
 \[
\div_{\frac{1}{2}} (A_{ik} d_{\frac{1}{2}} u^k) =\brac{A_{i\ell}\Omega_{\ell k}\, -d_{\frac{1}{2}} A_{ik} } \cdot d_{\frac{1}{2}}u^k.
  \]
Our main result is then the existence of the nonlocal analogue of Rivi\`ere's $GL(N)$-Coulomb gauge \cite{R07}, namely we have

\begin{theorem}\label{th:main}
 There exists a number $0<\sigma \ll 1$ such that the following holds.
 
 If $\Omega\in L^2(\Ep_{od}^1\R)$ is antisymmetric, i.e., $\Omega_{ij} = -\Omega_{ji}$ and satisfies
 \[
  \|\Omega\|_{L^2(\Ep_{od}^1 \R)} < \sigma,
 \]
then there exists an invertible matrix valued function $A \in L^\infty\cap \dot{W}^{\frac12,2}(\R,GL(N))$ such that for $\Omega^A \coloneqq A\Omega - d_\frac12 A$ we have
\[
  \div_\frac12 \brac{\Omega^A} =0.
\]
Moreover we have
\begin{equation}\label{eq:Aestimates}
[A]_{W^{\frac12,2}(\R)} \aleq \|\Omega\|_{L^2(\Ep_{od}^1 \R)}, \quad \|A\|_{L^\infty(\R)}\aleq 1+ \|\Omega\|_{L^2(\Ep_{od}^1 \R)}.
\end{equation}
\end{theorem}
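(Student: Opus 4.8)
The plan is to realise $A$ as a fixed point of the map coming from the linearised gauge equation, and to beat the endpoint failure $\dot W^{\frac12,2}(\R)\not\hookrightarrow L^\infty(\R)$ by extracting, from the antisymmetry of $\Omega$, a Hardy–space gain that \emph{does} survive through $\laps{1}$. First I would rewrite the equation: since $\div_{\frac12}d_{\frac12}=c_0\laps{1}$ for a constant $c_0>0$, a field with $A-\mathrm{Id}\in L^\infty\cap\dot W^{\frac12,2}(\R,\R^{N\times N})$ satisfies $\div_{\frac12}(A\Omega-d_{\frac12}A)=0$ if and only if
\[
 \laps{1}A=c_0^{-1}\,\div_{\frac12}(A\Omega)\qquad\text{in }\R,
\]
and the normalisation $A-\mathrm{Id}\in\dot W^{\frac12,2}$ singles out the solution $A=\mathrm{Id}+c_0^{-1}\laps{-1}\div_{\frac12}(A\Omega)$. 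I would then solve this by a contraction on the closed ball
\[
 \mathcal B=\Bigl\{A:\ [A]_{W^{\frac12,2}(\R)}\le C_1\|\Omega\|_{L^2},\ \|A-\mathrm{Id}\|_{L^\infty(\R)}\le C_2\|\Omega\|_{L^2}\Bigr\},
\]
with $T(A):=\mathrm{Id}+c_0^{-1}\laps{-1}\div_{\frac12}(A\Omega)$.

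The $\dot W^{\frac12,2}$–bound for $T(A)$ is the energy estimate: pairing the equation with $T(A)-\mathrm{Id}$ gives $[T(A)]_{W^{\frac12,2}}^2\aleq\|A\|_{L^\infty}\|\Omega\|_{L^2}[T(A)]_{W^{\frac12,2}}$, so $[T(A)]_{W^{\frac12,2}}\aleq(1+C_2\|\Omega\|_{L^2})\|\Omega\|_{L^2}\le C_1\|\Omega\|_{L^2}$ once $\sigma$ is small and $C_1$ large. The delicate point is the $L^\infty$–bound, where the antisymmetry must enter. I would first reorganise $\Omega$ through the fractional analogue of Uhlenbeck's Coulomb gauge available in the framework of \cite{MS18}: for $\|\Omega\|_{L^2}$ small there is $P\in L^\infty\cap\dot W^{\frac12,2}(\R,SO(N))$, $[P]_{W^{\frac12,2}}\aleq\|\Omega\|_{L^2}$, whose (nonlocal) gauge action replaces $\Omega$ by a coclosed potential $\Omega_P$, $\div_{\frac12}\Omega_P=0$ and $\|\Omega_P\|_{L^2}\aleq\|\Omega\|_{L^2}$. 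Writing $A=\hat A P$ and expanding $\div_{\frac12}(A\Omega)$ with the (non‑multiplicative) nonlocal product and composition rules, the right–hand side becomes a sum of fractional div–curl quantities together with genuinely lower–order three–term commutator remainders built from $d_{\frac12}P$. The model term is controlled by the elementary identity that, for an odd $1$–form $F$ with $\div_{\frac12}F=0$, one has $\div_{\frac12}(aF)=-F\cdot d_{\frac12}a$; applied with $F=\Omega_P$ this identifies the main part of the right–hand side with $-\Omega_P\cdot d_{\frac12}\hat A$, which by the fractional div–curl lemma of \cite{MS18} lies in the Hardy space $\mathcal H^1(\R)$ with norm $\aleq\|\Omega_P\|_{L^2}[\hat A]_{W^{\frac12,2}}$, while the commutator remainders are likewise placed in $\mathcal H^1(\R)$ by the three–term commutator estimates. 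Finally I would invoke the mapping property
\[
 \laps{-1}\colon\ \mathcal H^1(\R)\ \longrightarrow\ \dot W^{1,1}(\R)\ \hookrightarrow\ L^\infty(\R)\cap C^0(\R),
\]
valid because $\partial_x\laps{-1}$ equals, up to a constant, the Hilbert transform, which is bounded on $\mathcal H^1(\R)$, while $\dot W^{1,1}(\R)\hookrightarrow L^\infty(\R)$. This yields $\|T(A)-\mathrm{Id}\|_{L^\infty}\aleq\|\Omega\|_{L^2}[A]_{W^{\frac12,2}}\aleq\|\Omega\|_{L^2}^2\le C_2\|\Omega\|_{L^2}$ for $\sigma$ small; hence $T(\mathcal B)\subset\mathcal B$, and the same estimates for $T(A)-T(A')$ give a contraction in the norm $\|\cdot\|_{L^\infty(\R)}+[\cdot]_{W^{\frac12,2}(\R)}$.

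The resulting fixed point $A$ solves $\div_{\frac12}(A\Omega-d_{\frac12}A)=0$ and satisfies \eqref{eq:Aestimates} by construction. For invertibility, $\|A-\mathrm{Id}\|_{L^\infty}\aleq\sigma<1$ gives $A(x)\in GL(N)$ a.e., $A^{-1}=\sum_{k\ge0}(\mathrm{Id}-A)^k$ converges in $L^\infty(\R)$, and $d_{\frac12}(A^{-1})(x,y)=-A^{-1}(x)\,d_{\frac12}A(x,y)\,A^{-1}(y)$ together with the algebra property of $L^\infty\cap\dot W^{\frac12,2}(\R)$ gives $A^{-1}\in L^\infty\cap\dot W^{\frac12,2}(\R,GL(N))$. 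The main obstacle throughout is the $L^\infty$–estimate: for a general $\Omega$ one only gets $\div_{\frac12}(A\Omega)\in\dot W^{-\frac12,2}$, hence $A\in\dot W^{\frac12,2}$, which is exactly borderline for $L^\infty$; the whole argument rests on the compensation (a div–curl/Wente–type phenomenon routed through three–term commutators) produced by the antisymmetry once it has been exposed by the fractional Coulomb gauge, and on the — technical but not routine — verification that all the nonlocal remainder terms also land in $\mathcal H^1(\R)$.
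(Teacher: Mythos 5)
Your overall strategy — reduce to the Coulomb gauge $P$ from \cite{MS18}, set up a perturbative fixed point, and recover $L^\infty$ via a Wente/Hardy mechanism on div--curl terms — is the right one and matches the paper in spirit. However, the specific fixed point you propose does not work as stated, and the gap is precisely where the paper introduces its main new device.

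You define $T(A)=\mathrm{Id}+c_0^{-1}\laps{-1}\div_{\frac12}(A\Omega)$ and claim that after writing $A=\hat A P$ the right-hand side reorganises into a div--curl term $-\Omega^P\cdot d_{\frac12}\hat A$ plus Hardy-controlled remainders. This is not what the algebra gives. Substituting $A=(I+\eps)P$ and using the paper's identity (Lemma~\ref{la:rewrite1}), one has
\[
A\Omega = d_{\frac12}A \ -\ (I+\eps(x))\,\Omega^P(x,y)\,P(y)\ -\ d_{\frac12}\eps(x,y)\,P(y)\ +\ R_\eps(x,y),
\]
so $\div_{\frac12}(A\Omega)$ contains $\div_{\frac12}(d_{\frac12}A)=c_0\laps{1}(A-\mathrm{Id})$ \emph{back on the right}. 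This collapses your iteration to $T(A)=A - c_0^{-1}\laps{-1}\bigl[\div_{\frac12}((I+\eps)\Omega^P P(y))+\div_{\frac12}(d_{\frac12}\eps\,P(y))-\div_{\frac12}R_\eps\bigr]$, and the middle term $\div_{\frac12}\bigl(d_{\frac12}\eps\,P(y)\bigr)$ is of the same order as $\laps{1}\eps$ (only the smoothness of $P$ is small, not $\|P-\mathrm{Id}\|_{L^\infty}$), so $T(A)-A$ is \emph{not} small: the map is not a contraction. Equivalently, if one instead tries to expand $\div_{\frac12}(A\Omega)$ without separating off $d_{\frac12}A$, the dominant contribution is $\hat A(x)\,d_{\frac12}P(x,y)$, whose $\div_{\frac12}$ produces $(\laps{1}P)\hat A$; this lies only in $\dot W^{-\frac12,2}$, not in $\mathcal H^1(\R)$, so your passage through $\laps{-1}\colon\mathcal H^1\to \dot W^{1,1}\hookrightarrow L^\infty$ does not apply to it. The div--curl compensation you invoke simply is not present in $\div_{\frac12}(A\Omega)$ as a whole.

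The missing idea is the one the paper supplies: do not iterate on the scalar relation $\laps{1}A = c_0^{-1}\div_{\frac12}(A\Omega)$ directly. Instead, apply the nonlocal Hodge decomposition (\Cref{la:hodge}) to the one-form $-(I+\eps)\Omega^P P(y)-d_{\frac12}\eps\,P(y)+R_\eps$, obtaining $d_{\frac12}a(\eps)+B(\eps)$ with $\div_{\frac12}B(\eps)=0$, and then set up the fixed point for $\eps$ via the \emph{auxiliary} scalar equation \eqref{eq:Tdefinition}, in which $\div_{\frac12}(d_{\frac12}T(\eps))$ appears \emph{without} the $P(y)$-factor while the $P$-factors are shifted onto the genuinely div--curl pieces $B(\eps)\cdot d_{\frac12}P^T$ and $d_{\frac12}\eps\cdot(\Omega^P)^\ast$. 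This is what makes the fractional Wente lemma applicable and makes $T$ a contraction. Finally, one must verify that the fixed point of the auxiliary equation also satisfies the original one-form equation; this is \Cref{le:aisvanishing}, which shows $a\equiv\mathrm{const}$ by another div--curl/Wente argument. Without both the Hodge decomposition and the two-equation bookkeeping, your scheme cannot close, and these are precisely the steps absent from your proposal. Your invertibility discussion and the final estimates for $A$ are fine once a valid $\eps$ has been produced.
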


As an immediate corollary we obtain 

\begin{corollary}[Conservation law]\label{co:conservation}
Assume \change{$u\in \dot{W}^{\frac12,2}(\R,\R^N)\cap (L^2+L^\infty)(\R,\R^N)$ and $f\in \dot{W}^{-\frac12,2}(\R,\R^N)$ satisfy}
\[
 \div_{\frac{1}{2}} (d_{\frac{1}{2}} \change{u}) = \Omega \cdot d_{\frac{1}{2}} u+f, ~~\mbox{in ${\mathcal{D}}'(\R)$}
\]
and $\Omega$ satisfies the condition of \Cref{th:main}. Then there exists a matrix $A$ such that for $\Omega^A \coloneqq A\Omega - d_\frac12 A$ we have
\[
 \div_{\frac{1}{2}} \brac{A d_{\frac{1}{2}} u- \change{(\Omega^A)^\ast} u}=Af,  \quad~~\text{ in } {\mathcal{D}}'(\R),
\]
where $\change{(\Omega^A)^\ast(x,y)\coloneqq \Omega^A(y,x)}$.
\end{corollary}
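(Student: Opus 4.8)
The plan is to plug the gauge from \Cref{th:main} into the equation for $u$ and then perform a nonlocal ``integration by parts'' exploiting the gauged condition $\div_\frac12(\Omega^A)=0$. First, let $A\in L^\infty\cap\dot{W}^{\frac12,2}(\R,GL(N))$ be the matrix produced by \Cref{th:main} for the given $\Omega$, so that $\Omega^A=A\Omega-d_\frac12 A$ satisfies $\div_\frac12(\Omega^A)=0$ entrywise; note that $\Omega^A\in L^2(\Ep_{od}^1\R)$, since $A\in L^\infty$, $\Omega\in L^2(\Ep_{od}^1\R)$, and $d_\frac12 A\in L^2(\Ep_{od}^1\R)$ by \eqref{eq:Aestimates}. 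The algebraic identity underlying \Cref{le:reformulateOmegatoA}, evaluated on the equation $\div_\frac12(d_\frac12 u)-\Omega\cdot d_\frac12 u=f$ rather than on its $f=0$ version, gives
\[
 \div_\frac12\brac{A\,d_\frac12 u}=\Omega^A\cdot d_\frac12 u+Af\qquad\text{in }\mathcal{D}'(\R),
\]
where $Af\in\dot{W}^{-\frac12,2}(\R)$ by duality since $A\in L^\infty\cap\dot{W}^{\frac12,2}$. Thus the corollary reduces to the distributional identity $\Omega^A\cdot d_\frac12 u=\div_\frac12\brac{(\Omega^A)^\ast u}$.

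Second, I would prove the following nonlocal integration by parts: if $F$ is a matrix-valued $1$-form in $L^2(\Ep_{od}^1\R)$ with $\div_\frac12 F=0$ entrywise, then $F\cdot d_\frac12 u=\div_\frac12\brac{F^\ast u}$ in $\mathcal{D}'(\R)$, where $F^\ast(x,y):=F(y,x)$ and $(F^\ast u)_i(x,y):=\sum_k F^\ast_{ik}(x,y)\,u^k(x)$. To see this, for $\varphi\in C_c^\infty(\R,\R^N)$ one tests $\div_\frac12 F_{ik}=0$ against $\varphi^i u^k$ and uses the nonlocal Leibniz rule
\[
 d_\frac12\brac{\varphi^i u^k}(x,y)=\varphi^i(x)\,d_\frac12 u^k(x,y)+u^k(y)\,d_\frac12\varphi^i(x,y);
\]
summing over $i,k$, the first term produces exactly $(F\cdot d_\frac12 u)[\varphi]$, while in the second term one relabels the integration variables $x\leftrightarrow y$ --- using that the measure $\frac{dx\,dy}{|x-y|}$ is symmetric whereas $d_\frac12\varphi^i$ is antisymmetric --- which turns $F_{ik}(x,y)\,u^k(y)$ into $-F_{ik}(y,x)\,u^k(x)=-F^\ast_{ik}(x,y)\,u^k(x)$, i.e. into $-\div_\frac12\brac{F^\ast u}[\varphi]$. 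Rearranging yields the identity; applying it with $F=\Omega^A$ and combining with the first step gives $\div_\frac12\brac{A\,d_\frac12 u-(\Omega^A)^\ast u}=Af$, as claimed.

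The main obstacle I anticipate is analytic, not algebraic. Since $u$ lies only in $\dot{W}^{\frac12,2}\cap(L^2+L^\infty)(\R,\R^N)$, the function $\varphi^i u^k$ is not an admissible test function for $\div_\frac12 F_{ik}=0$, and the products $\Omega^A\cdot d_\frac12 u$ and $(\Omega^A)^\ast u$ must first be shown to define, respectively, an element of $\dot{W}^{-\frac12,2}(\R)+L^1(\R)$ and a $1$-form against which $\div_\frac12$ can be paired. I would handle this by writing $u=u_2+u_\infty$ with $u_2\in L^2$ and $u_\infty\in L^\infty$, approximating each summand by $C_c^\infty$ functions converging to it in $\dot{W}^{\frac12,2}$ (for $u_\infty$, additionally with uniformly bounded $L^\infty$ norms and pointwise a.e.\ convergence), and passing to the limit in the two identities above; the passages to the limit are controlled using $\Omega^A,\,d_\frac12 u\in L^2(\Ep_{od}^1\R)$, the Cauchy--Schwarz inequality in the measure $\frac{dx\,dy}{|x-y|}$, the continuity properties of $\div_\frac12$ recalled in \Cref{s:preliminaries}, and dominated convergence. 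No estimates beyond \eqref{eq:Aestimates} are required.
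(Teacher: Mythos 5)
Your proposal is correct and follows exactly the route the paper has in mind when it calls this an ``immediate corollary.'' The two steps you isolate --- first, passing through \Cref{le:reformulateOmegatoA} in its inhomogeneous form to get $\div_\frac12(A\,d_\frac12 u)=\Omega^A\cdot d_\frac12 u+Af$, and second, rewriting $\Omega^A\cdot d_\frac12 u = \div_\frac12\brac{(\Omega^A)^\ast u}$ --- are precisely the ingredients the paper itself uses in the analogous spot inside the proof of \Cref{pr:weakconv:1}. The only stylistic difference is that you re-derive the second identity from scratch via the $x\leftrightarrow y$ relabeling; the paper would instead cite \Cref{la:prodrule} twice (once to get $\div_\frac12(\Omega^A)^\ast=0$ from $\div_\frac12\Omega^A=0$, and once to apply \eqref{eq:productrulewithx} with $F=(\Omega^A)^\ast$, whose transpose is $\Omega^A$), but the underlying computation is identical to the one you wrote out. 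Your final remark about approximating $u$ and justifying the test function pairing is a legitimate (if unglamorous) concern that the paper leaves implicit; the density argument you sketch, together with $\Omega^A\in L^2(\Ep_{od}^1\R)$ and $d_\frac12 u\in L^2(\Ep_{od}^1\R)$, closes it.
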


\Cref{th:main} is applicable to the half-harmonic map system as derived \cite[Proposition 4.2]{MS18}, because of a localization result, see \Cref{pr:localization}.

With the methods of \Cref{th:main} we obtain the analogue of \cite[Theorem I.5]{R07}, our second main result.
\begin{theorem}\label{th:weakconvergence} 
Assume $\Omega_\ell\in L^2(\Ep_{od}^1 \R)$ is a sequence of antisymmetric vector fields, i.e., $(\Omega_{ij})_\ell = - (\Omega_{ji})_\ell$, weakly convergent in $L^2$ to an $\Omega \in L^2(\Ep_{od}^1 \R)$. Assume further that $f_\ell \in \dot{W}^{-\frac12,2}(\R, \R^N)$ converges strongly to $f$ in $\dot{W}^{-\frac12,2}$, and assume that $u_\ell\in (L^2+L^\infty(\R))\cap\dot{W}^{\frac12,2}(\R,\R^N)$ is a sequence of solutions to
\begin{equation}\label{eq:uellequation}
 (-\Delta)^{\frac12} u_\ell = \Omega_{\ell} \cdot d_\frac12 u_\ell + f_\ell \quad~~\mbox{in~ ${\mathcal{D}}'(\R)$}
\end{equation}
such that $\sup_{\ell}\brac{\|u_\ell\|_{L^2+L^\infty(\R)}+[u_\ell]_{W^{\frac12,2}(\R)}}<\infty$.
Then, up to taking a subsequence $u_\ell$ converges weakly in $\dot{W}^{\frac12,2}(\R,\R^N)$ to some $u\in \dot{W}^{\frac12,2}(\R,\R^N) \cap ((L^2 + L^\infty)(\R,\R^N))$, which is a solution to
\[
 (-\Delta)^\frac12 u = \Omega \cdot d_\frac12 u + f   ~~\mbox{in ~${\mathcal{D}}'(\R).$}
\]
\end{theorem}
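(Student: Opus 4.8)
The plan is to transfer the problem to the gauged conservation law, where the otherwise intractable nonlinearity becomes weak--strong continuous, pass to the limit there, and then invert the gauge. First I would extract compactness from the uniform bounds: a subsequence with $u_\ell\rightharpoonup u$ in $\dot W^{\frac12,2}(\R,\R^N)$, $u_\ell\to u$ in $L^2_{\loc}$ and a.e.\ (Rellich), and weak-$\ast$ in $L^2+L^\infty$, so that $u\in\dot W^{\frac12,2}\cap(L^2+L^\infty)$ and $d_\frac12 u_\ell\rightharpoonup d_\frac12 u$ in $L^2(\Ep_{od}^1\R)$. After a localization/covering reduction to the case $\sup_\ell\|\Omega_\ell\|_{L^2(\Ep_{od}^1\R)}<\sigma$, \Cref{th:main} furnishes gauges $A_\ell$ with $\div_\frac12\Omega^{A_\ell}=0$, $\Omega^{A_\ell}=A_\ell\Omega_\ell-d_\frac12 A_\ell$, and, by \eqref{eq:Aestimates}, $A_\ell$ uniformly bounded in $L^\infty\cap\dot W^{\frac12,2}$; from the construction in \Cref{th:main} one also gets $A_\ell^{-1}$ uniformly bounded there. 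Passing to a further subsequence, $A_\ell\rightharpoonup A$ and $A_\ell^{-1}\rightharpoonup A^{-1}$ in $\dot W^{\frac12,2}$, a.e.\ and weak-$\ast$ in $L^\infty$, with $A\in L^\infty\cap\dot W^{\frac12,2}$ invertible.

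Next I would pass to the limit in the gauge relation: since $A_\ell\to A$ strongly in $L^p_{\loc}$ for all $p<\infty$, $\Omega_\ell\rightharpoonup\Omega$ in $L^2$, and $d_\frac12 A_\ell\rightharpoonup d_\frac12 A$, we get $\Omega^{A_\ell}\rightharpoonup\Omega^A:=A\Omega-d_\frac12 A$ and hence $\div_\frac12\Omega^A=0$ in $\mathcal D'(\R)$; thus $A$ is a valid $GL(N)$-gauge for the limiting potential $\Omega$. Applying \Cref{co:conservation} to each $u_\ell$ gives
\[ \div_\frac12\brac{A_\ell\, d_\frac12 u_\ell-(\Omega^{A_\ell})^\ast u_\ell}=A_\ell f_\ell \qquad\text{in }\mathcal D'(\R), \]
and I would pass to the limit against any $\varphi\in C_c^\infty(\R)$. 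The term $\int A_\ell\, d_\frac12 u_\ell\cdot d_\frac12\varphi$ converges (strongly $L^p_{\loc}$-convergent factor times weakly $L^2$-convergent factor), and $\int(\Omega^{A_\ell})^\ast u_\ell\cdot d_\frac12\varphi$ converges (weakly $L^2$-convergent $\Omega^{A_\ell}$ times strongly $L^2_{\loc}$-convergent $u_\ell$, with the uniform $L^2+L^\infty$ bound handling the behaviour at infinity); in both cases the off-diagonal tails are controlled by the compact support of $d_\frac12\varphi$, via the nonlocal div--curl type estimates of \cite{MS18}. Finally $A_\ell f_\ell\to Af$ in $\dot W^{-\frac12,2}$: using that $L^\infty\cap\dot W^{\frac12,2}$ pointwise-multiplies $\dot W^{-\frac12,2}$ into itself, split $A_\ell f_\ell-Af=A_\ell(f_\ell-f)+(A_\ell-A)f$, where the first summand vanishes by the uniform multiplier bound and $f_\ell\to f$, and the second vanishes since $(A_\ell-A)\varphi$ is bounded in $\dot W^{\frac12,2}$ and converges to $0$ in $\mathcal D'$, hence $\rightharpoonup 0$ in $\dot W^{\frac12,2}$, so $f[(A_\ell-A)\varphi]\to 0$. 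This yields $\div_\frac12(A\, d_\frac12 u-(\Omega^A)^\ast u)=Af$ in $\mathcal D'(\R)$.

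It then remains to undo the gauge: since $A$ is invertible in $L^\infty\cap\dot W^{\frac12,2}$ and $\div_\frac12\Omega^A=0$, the computations behind \Cref{le:reformulateOmegatoA} and \Cref{co:conservation} are reversible, and running them backwards gives $(-\Delta)^\frac12 u=\Omega\cdot d_\frac12 u+f$ in $\mathcal D'(\R)$, which is the assertion.

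The main obstacle is the passage to the limit in the conservation law. A priori $\Omega_\ell\cdot d_\frac12 u_\ell$ is a product of two merely weakly $L^2$-convergent sequences and need not converge to $\Omega\cdot d_\frac12 u$; it is exactly the gauge that rewrites it as the weak--strong continuous $\div_\frac12(A_\ell d_\frac12 u_\ell-(\Omega^{A_\ell})^\ast u_\ell)$. Carrying the weak--strong limit out rigorously in the nonlocal framework --- controlling the off-diagonal tails on all of $\R$ (rather than on a bounded domain as in \cite{R07}), securing the uniform boundedness and invertibility of the $A_\ell$ and $A_\ell^{-1}$, and making the initial reduction to small $\|\Omega_\ell\|_{L^2}$ precise for nonlocal one-forms --- is where the real work lies.
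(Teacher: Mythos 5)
Your overall strategy --- gauge the equation, exploit the weak--strong structure revealed by the gauge, then undo the gauge --- is exactly the paper's, and Steps involving the limits of $\int A_\ell d_{\frac12}u_\ell\cdot d_{\frac12}\varphi$ and $\int(\Omega^{A_\ell})^\ast u_\ell\cdot d_{\frac12}\varphi$ track the paper's Steps~1, 2, 3, 5 and 6. But there is a genuine gap in the line ``after a localization/covering reduction to the case $\sup_\ell\|\Omega_\ell\|_{L^2(\Ep^1_{od}\R)}<\sigma$'', and it is not a detail you can wave away: you cannot reduce to globally small potentials. The $\Omega_\ell$ are merely uniformly bounded, and the weak limit $\Omega$ may be large; the problem is nonlocal, so restricting to a small interval $D$ does not make $\Omega_\ell$ disappear outside $D$. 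What the paper actually does (in \Cref{pr:weakconv:1}) is \emph{truncate} the potential, $\Omega_{D,\ell}\coloneqq\chi_D(x)\chi_D(y)\Omega_\ell(x,y)$, apply \Cref{th:main} only to $\Omega_{D,\ell}$, and then carry along the off-diagonal tail $\Omega_{D^c,\ell}=\Omega_\ell-\Omega_{D,\ell}$ as an explicit extra term $\int A_\ell\Omega_{D^c,\ell}\cdot d_{\frac12}u_\ell\,\psi$ in the gauged weak formulation. That extra term does not vanish, and its limit (paper's Step~4) requires a separate argument: because $\supp\psi\subset D_1\subset\subset D$, the integrand only sees $|x-y|\geq\dist(D_1,\partial D)$, which upgrades $u_\ell\to u$ to \emph{strong} $L^2(\Ep^1_{od})$-convergence of $F_\ell(x,y)\coloneqq\chi_{|x-y|\geq\dist(D_1,\partial D)}\,d_{\frac12}u_\ell(x,y)A_\ell(x)\psi(x)$, and then weak--strong pairing with $\Omega_{D^c,\ell}\rightharpoonup\Omega_{D^c}$ closes the argument. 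In particular, the conservation law you write, $\div_\frac12(A_\ell d_\frac12 u_\ell-(\Omega^{A_\ell})^\ast u_\ell)=A_\ell f_\ell$ with $\Omega^{A_\ell}=A_\ell\Omega_\ell-d_{\frac12}A_\ell$, is \emph{not} available: $\div_\frac12(\Omega^{A_\ell})\neq0$ because $A_\ell$ gauges $\Omega_{D,\ell}$, not $\Omega_\ell$.

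The second missing ingredient is removability of concentration points. The covering argument of Sacks--Uhlenbeck type only gives, for each scale $2^{-\alpha}$, at most $K\lesssim\Lambda/\sigma$ bad balls on which the local $L^2$-energy of $\Omega_\ell$ is $\geq\sigma$; after passing to the limit one obtains the equation only on $D\setminus\Sigma$ for a finite $\Sigma$ (\Cref{co:weskconvexceptSigma}). To conclude on all of $D$ you need the capacitary removability lemma (\Cref{le:removability}), which relies on the existence of $\zeta_k\in C_c^\infty$ with $\zeta_k\equiv1$ near $x_0$ and $[\zeta_k]_{W^{1/2,2}}\to0$. Neither the finite bad set nor its removal is addressed in your proposal, and without them the passage from local to global cannot be completed.
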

Here, as usual, we denote \[\|f\|_{L^2+L^\infty(\R)} = \inf_{f_1 \in L^2(\R)} \brac{\|f_1\|_{L^2(\R)} + \|f-f_1\|_{L^\infty(\R)}}.\]
\Cref{th:weakconvergence} will be proven in \Cref{s:weakconv}.

\subsection*{Acknowledgment}
Funding is acknowledged as follows 
\begin{itemize}
 \item (FDL) Swiss National Fund, SNF200020\_192062: Variational Analysis in Geometry;
 \item (KM) FSR Incoming Post-doctoral Fellowship;
 \item (AS) Simons Foundation (579261).
\end{itemize}
\change{The authors would also like to thank the anonymous referee for helpful comments.}

\section{Preliminaries and useful tools}\label{s:preliminaries}
We follow the notation of \cite{MS18} for the nonlocal operators. For readers convenience we recall it here. We write \change{$\mathcal M(\R^n)$ for the space of all functions $f\colon \R^n \to \R$ measurable with respect to the Lebesgue measure $\dx$ and} $\mathcal M(\Ep_{od}^1\R^n)$ for the space of vector fields $F\colon \R^n \times \R^n \to \R$ measurable with respect to the $\frac{\dx \dy}{|x-y|^n}$ measure, where $``od"$ stands for ``off diagonal''.

For two vector fields $F,\, G \in \mathcal M(\Ep_{od}^1 \R^n)$ the scalar product is defined as
\[
F \cdot G (x) \coloneqq \int_{\R^n} F(x,y) \, G(x,y) \frac{\dif y}{|x-y|^n}.
\]

For any $p>1$ the natural $L^p$-space on vector fields $F\colon \R^n\times \R^n \to \R$ is induced by the norm 
\[
	\|F\|_{L^p(\Ep_{od}^1 \R^n)} \coloneqq \brac{\int_{\R^n} \int_{\R^n} |F(x,y)|^p\frac{\dif x \dy}{|x-y|^n}}^\frac1p
\]
and for $D\subset \R^n$ we define
\[
 \|F\|_{L^p(\Ep_{od}^1 D)} \coloneqq \brac{\iint_{(D\times \R^n)\cup(\R^n\times D)} |F(x,y)|^p \frac{\dif x \dif y}{|x-y|^n}}^\frac1p.
\]
\change{Let $s\in(0,1)$. }For $f\colon \R^n \to \R$ we let the $s$-gradient $d_s \colon \mathcal M(\R^n)\to \mathcal M(\Ep_{od}^1 \R^n)$ to be
\[
d_s f(x,y) \coloneqq \frac{f(x) - f(y)}{|x-y|^s}.
\]
Observe that with this notation we have
\[
 \|d_s f\|_{L^p(\Ep_{od}^1 \R^n)} = [f]_{W^{s,p}(\R^n)},
\]
where 
\[ [f]_{W^{s,p}(\R^n)}=\left(\int_{\R^n}\int_{\R^n}\frac{|f(x)-f(y)|^p}{|x-y|^{n+sp}}\dx\dy\right)^{1/p}\]
is the Gagliardo--Slobodeckij seminorm.

\change{Let $s\in(0.1)$ and $F\in \mathcal M(\Ep_{od}^1 \R^n)$.} We define the fractional $s$-divergence in the distributional way
\[
\div_{s} F[\vp] \coloneqq \int_{\R^n} \int_{\R^n} F(x,y)\, d_s \vp(x,y) \frac{\dx \dy}{|x-y|^n}, \quad \vp\in C_c^\infty(\R^n),
\]
\change{whenever the integrals converge.}

With this notation we have $\div_s d_s = (-\Delta)^s$, i.e.,
\[
 \int_{\R^n} d_s f \cdot d_s g(x) \dif x= \change{\frac{2}{C_{n,s}}} \int_{\R} (-\Delta)^s f(x) g(x) \dif x, 
\]
where the fractional Laplacian is defined as
\[
 (-\Delta)^s f (x) \coloneqq \change{C_{n,s}} P.V. \int_{\R^n} \frac{f(x)- f(y)}{|x-y|^{2s}}\frac{\dif y}{|x-y|^n}. 
\]

A simple observation is the following
\begin{lemma}\label{la:prodrule}
Let $F \in \change{\mathcal{M}}(\Ep^1_{od} \R^n)$ then we define 
\[
 F^\ast(x,y) \coloneqq \change{F(y,x)}.
\]
If $\div_s F = 0$ then $\div_s F^\ast = 0$.

Moreover, for any $F \in \change{\mathcal{M}}(\Ep^1_{od} \R^n)$ and $u\in \change{\mathcal{M}}(\R^n)$ we have 
\change{
\begin{equation}\label{eq:productrulewithx}
 \div_{s} (Fu(x)) = \div_s(F) u+F^\ast \cdot d_{s}u
 \end{equation}
and 
 \begin{equation}\label{eq:productrulewithy}
 \div_{s} (Fu(y)) = \div_s(F) u-F \cdot d_{s}u
 \end{equation}
 }
\change{whenever each term is well-defined.}
\end{lemma}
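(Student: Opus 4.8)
The plan is to verify Lemma~\ref{la:prodrule} directly from the definitions of $\div_s$, $d_s$ and the scalar product, treating all three assertions as formal manipulations of the double integrals (justified, as the statement says, whenever the terms involved converge).

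First I would handle the $F^\ast$-invariance. For any $\varphi\in C_c^\infty(\R^n)$ we have
\[
\div_s F^\ast[\varphi]=\int_{\R^n}\int_{\R^n}F(y,x)\,\frac{\varphi(x)-\varphi(y)}{|x-y|^s}\,\frac{\dx\dy}{|x-y|^n}.
\]
Swapping the names of the integration variables $x\leftrightarrow y$ and using $|x-y|=|y-x|$, the kernel $|x-y|^{-n}$ and the $s$-power are symmetric, while $d_s\varphi$ picks up a sign: $\varphi(y)-\varphi(x)=-d_s\varphi(x,y)\,|x-y|^s/|x-y|^s$. Hence $\div_s F^\ast[\varphi]=-\div_s F[\varphi]$, so $\div_s F=0$ forces $\div_s F^\ast=0$. (In fact this shows $\div_s F^\ast=-\div_s F$ in general, which is the cleaner statement; I would record that.)

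Next, the two product rules. For \eqref{eq:productrulewithx}, start from the left-hand side tested against $\varphi$:
\[
\div_s(Fu(x))[\varphi]=\int_{\R^n}\int_{\R^n}F(x,y)\,u(x)\,\frac{\varphi(x)-\varphi(y)}{|x-y|^s}\,\frac{\dx\dy}{|x-y|^n}.
\]
The trick is the Leibniz-type splitting $u(x)\,(\varphi(x)-\varphi(y))=(u(x)\varphi(x)-u(y)\varphi(y))-\varphi(y)(u(x)-u(y))$. The first piece, divided by $|x-y|^{s+n}$ and integrated, is exactly $\div_s F[u\varphi]=(\div_s F)\,u\,[\varphi]$ (here $\div_s F$ is the distribution, and pairing it against $u\varphi$ is the meaning of $(\div_s F)u$ tested against $\varphi$). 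For the second piece, $-\int\!\int F(x,y)\,d_s u(x,y)\,\varphi(y)\,\frac{\dx\dy}{|x-y|^n}$: swap $x\leftrightarrow y$ to move $\varphi$ onto the diagonal, picking up $F(y,x)=F^\ast(x,y)$ and $d_su(y,x)=-d_su(x,y)$, so the two sign changes cancel and this term becomes $\int F^\ast(x,y)\,d_su(x,y)\,\varphi(x)\,\frac{\dx\dy}{|x-y|^n}=\int_{\R^n}(F^\ast\cdot d_su)(x)\,\varphi(x)\dx$. Assembling, $\div_s(Fu(x))=(\div_s F)u+F^\ast\cdot d_su$. Formula \eqref{eq:productrulewithy} is proved the same way with the splitting $u(y)\,(\varphi(x)-\varphi(y))=(u(x)\varphi(x)-u(y)\varphi(y))-\varphi(x)(u(x)-u(y))$; now the correction term already has $\varphi(x)$ on the diagonal, so no swap is needed and one directly gets $-F\cdot d_su$, explaining the sign difference between the two identities.

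The computations themselves are elementary; the only real point of care is bookkeeping of the variable swap $x\leftrightarrow y$ and the attendant sign from the antisymmetry of $d_s$ — this is where one could slip a sign — together with being honest about convergence, i.e.\ each identity is an equality of distributions (or of integrable objects) valid precisely when the pieces appearing on both sides are separately well-defined, which is why the statement carries the proviso ``whenever each term is well-defined.'' I would therefore present the proof as a formal computation and simply remark that Fubini and the splitting are legitimate under that hypothesis.
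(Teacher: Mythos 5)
Your proof is correct and follows the same route as the paper's: the same Leibniz-type algebraic splitting, the same $x\leftrightarrow y$ change of variables in the correction term, and the same distributional identification of the remaining piece as $(\div_s F)u$. One small bonus in your version is that you record the general identity $\div_s F^\ast = -\div_s F$ explicitly, which proves the first claim of the lemma; the paper leaves that part without a written argument.
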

\begin{proof}
 We have 
\[
 F(x,y) u(x) (\varphi(x)-\varphi(y)) = F(x,y)  (u(x)\varphi(x)-u(y)\varphi(y)) - F(x,y) (u(x)-u(y))\varphi(y).
\]
Thus,
\begin{equation}\label{eq:productrule1}
\begin{split}
 \change{\div_s( Fu(x))[\vp] }&=\int_{\R^n} \int_{\R^n} \frac{F(x,y) u(x) (\varphi(x)-\varphi(y))}{|x-y|^{n+s}}\, \dy \dx\\
 &= 
 \int_{\R^n} \int_{\R^n} \frac{F(x,y)  (u(x)\varphi(x)-u(y)\varphi(y))}{|x-y|^{n+s}}\, \dy \dx\\
 &\quad
 - \int_{\R^n} \int_{\R^n} \frac{F(x,y)  (u(x)-u(y))\varphi(y)
 }{|x-y|^{n+s}}\, \dy \dx.
 \end{split}
\end{equation}
As for the latter term we have
\begin{equation}\label{eq:productrule2}
\begin{split}
 - &\int_{\R^n} \int_{\R^n} \frac{F(x,y)  (u(x)-u(y))\varphi(y)}{|x-y|^{n+s}} \dy \dx\\
 &=\change{-}\int_{\R^n} \int_{\R^n} \frac{-F(y,x)  (u(x)-u(y))\varphi(x)}{|x-y|^{n+s}} \dy \dx\\
 &= \int_{\R^n} \int_{\R^n} \frac{F^\ast (x,y)  (u(x)-u(y))\varphi(x)}{|x-y|^{n+s}} \dy \dx.
\end{split}
\end{equation}
\change{Combining \eqref{eq:productrule1} with \eqref{eq:productrule2} we obtain \eqref{eq:productrulewithx}. The proof of \eqref{eq:productrulewithy} is similar.}
\end{proof}

We also denote
\[
|D_{s,q} f|(x) \coloneqq \brac{\int_{\R^n}\frac{|f(x) - f(y)|^q}{|x-y|^{n+sq}} \dy}^\frac1q.
\]
We will be using the following ``Sobolev embedding'' theorem. 
\begin{theorem}\label{th:weirdsobolev}
	Let $s\in(0,1)$, $t\in(s,1)$, and let $p,\,p^*>1$ satisfy
	\[
	s-\frac{n}{p^*} = t - \frac{n}{p},
	\]
	where $q>1$ with $p^*>\frac{nq}{n+sq}$. Then we have
	\begin{equation}\label{eq:weirdsobolev}
	\||\mathcal D_{s,q}f|\|_{L^{p^*}(\R^n)} \aleq \|\laps{t} f\|_{L^p(\R^n)}
	\end{equation}
	and for any $r\in[1,\infty]$
	\begin{equation}\label{eq:weridsobolevlorentz}
	\||\mathcal D_{s,q}f|\|_{L^{(p^*,r)}(\R^n)} \aleq \|\laps{t} f\|_{L^{(p,r)}(\R^n)}.
	\end{equation}
\end{theorem}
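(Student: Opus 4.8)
The inequality \eqref{eq:weirdsobolev} is a Sobolev embedding of Triebel--Lizorkin type in disguise: $\|\laps{t}f\|_{L^p(\R^n)}$ is, up to a constant, the homogeneous Bessel--potential norm $\|f\|_{\dot F^{t}_{p,2}(\R^n)}$, and --- in the range of $p^*,q,s$ allowed by the hypothesis $p^*>\frac{nq}{n+sq}$, $s\in(0,1)$ --- the quantity $\||\mathcal D_{s,q}f|\|_{L^{p^*}(\R^n)}$ is comparable to a Triebel--Lizorkin norm $\|f\|_{\dot F^{s}_{p^*,q}(\R^n)}$ via the classical characterization of $\dot F$--spaces by first--order differences, so \eqref{eq:weirdsobolev} becomes the embedding $\dot F^{t}_{p,2}\hookrightarrow\dot F^{s}_{p^*,q}$, valid because $s<t$ and $t-\tfrac np=s-\tfrac n{p^*}$. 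I would, however, prove it directly. Set $g:=\laps{t}f$, so that (up to the standard caveat that a truncated potential must be used when $tp\ge n$) $f=I_t g$ for the Riesz potential $I_t$ of order $t$, and
\[
f(x)-f(y)=c\int_{\R^n}\big(|x-z|^{t-n}-|y-z|^{t-n}\big)\,g(z)\,dz .
\]

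Splitting this kernel according to whether $z$ is near $x$, near $y$, or far from both at the scale $|x-y|$ --- using $\big||x-z|^{t-n}-|y-z|^{t-n}\big|\aleq |x-y|\,|x-z|^{t-n-1}$ on the far region --- and assuming $g\ge0$, one gets
\[
|f(x)-f(y)|\aleq \big(I_t^{\,2|x-y|}g\big)(x)+\big(I_t^{\,2|x-y|}g\big)(y)+|x-y|\,\big(J_{t-1}^{\,>|x-y|}g\big)(x),
\]
where $I_t^{\,R}$ is $I_t$ truncated to $\{|z-\cdot|\le R\}$ and $J_{t-1}^{\,>R}g(x)=\int_{|z-x|>R}|z-x|^{t-n-1}g(z)\,dz$; $t>0$ makes the first two pieces finite and $t<1$ the tail piece. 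Plugging this into $|\mathcal D_{s,q}f|(x)^q=\int|f(x)-f(y)|^q|x-y|^{-n-sq}\,dy$ and treating the two pieces centred at $x$: one splits the $y$--integral at a radius $R_0$, bounding the inner potentials at small scales by the dyadic estimates $I_t^{\,R}g\aleq R^t Mg$, $J_{t-1}^{\,>R}g\aleq R^{t-1}Mg$, and at large scales by the global estimates $I_t^{\,R}g\aleq R^{\,t-n/p}\|g\|_{L^p}$ (or $\aleq I_t g$ when $tp<n$), $J_{t-1}^{\,>R}g\aleq R^{\,t-1-n/p}\|g\|_{L^p}$. Optimizing $R_0$ --- the exponent bookkeeping uses exactly the scaling identity $t-s+\tfrac n{p^*}=\tfrac np$ --- yields a Hedberg--type pointwise bound $|\mathcal D_{s,q}f|(x)\aleq(Mg(x))^{1-\theta}\,\big(I_t g(x)\ \text{or}\ \|g\|_{L^p}\big)^{\theta}$ with $\theta\in(0,1)$. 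Taking the $L^{p^*}$--norm and applying Hölder's inequality with exponents dictated by $s-\tfrac n{p^*}=t-\tfrac np$, the maximal--function factor is absorbed by the Hardy--Littlewood maximal inequality and the Riesz--potential factor by the Hardy--Littlewood--Sobolev inequality. The Lorentz version \eqref{eq:weridsobolevlorentz} follows from the same scheme with the Lorentz--space forms of these two inequalities, or by real interpolation from \eqref{eq:weirdsobolev}.

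The genuine obstacle is the remaining ``mixed'' piece $\big(I_t^{\,2|x-y|}g\big)(y)$, in which the truncated potential sits at the running point $y$: after $\big(I_t^{\,2|x-y|}g\big)(y)\aleq|x-y|^t (M^{<2|x-y|}g)(y)$ one must control $\big\|\big(\int|x-y|^{(t-s)q-n}(M^{<2|x-y|}g)(y)^q\,dy\big)^{1/q}\big\|_{L^{p^*}_x}$, and the naive bound $|f(x)-f(y)|\aleq|x-y|^t(Mg(x)+Mg(y))$ is useless here because $|x-y|^{tq-n-sq}$ is not integrable at infinity. This piece is handled by recognizing the $y$--integral as a superposition over dyadic scales of local averages of $g$ and invoking the Fefferman--Stein vector--valued maximal inequality $\|(\sum_k|Mf_k|^q)^{1/q}\|_{L^{p^*}}\aleq\|(\sum_k|f_k|^q)^{1/q}\|_{L^{p^*}}$ (available since $1<p^*,q<\infty$) --- equivalently, by invoking the first--differences characterization of $\dot F^{s}_{p^*,q}(\R^n)$; it is at this step that the admissibility restriction $p^*>\frac{nq}{n+sq}$ is used. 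Everything else is standard real--variable harmonic analysis.
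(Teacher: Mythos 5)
Your first paragraph is exactly the paper's own proof: the paper's \Cref{co:DsSobinequ} obtains \eqref{eq:weirdsobolev} by combining the first--differences characterization $\||\mathcal D_{s,q}f|\|_{L^{p^*}}\aeq[f]_{\dot F^s_{p^*,q}}$ (under $p^*>\frac{nq}{n+sq}$; this is \Cref{th:fspqchar}) with the Triebel--Lizorkin Sobolev embedding $\dot F^t_{p,2}\hookrightarrow\dot F^s_{p^*,q}$ and the identity $\|\laps t f\|_{L^p}\aeq[f]_{\dot F^t_{p,2}}$, and then gets \eqref{eq:weridsobolevlorentz} by real interpolation. One small but real correction to your framing: you call the difference characterization of $\dot F^s_{p^*,q}$ ``classical.'' The paper is careful to point out that for general $q\notin\{2,p^*\}$ this was \emph{not} classical; it was proved only recently by Prats and Saksman, and this is why the paper states and cites \Cref{th:fspqchar} as a separate theorem rather than treating it as folklore. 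If you present this route, you should cite that result.

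The rest of your proposal is a genuinely different, more self-contained route: a Hedberg-style real-variable argument via $f=I_tg$, a three-region split of $|x-z|^{t-n}-|y-z|^{t-n}$, truncated-potential bounds, and optimization of a cut radius. That scheme is sound and correctly reproduces the exponent arithmetic from $t-\frac np=s-\frac n{p^*}$, and it has the pedagogical virtue of showing concretely where each hypothesis acts. You also correctly flag the genuine obstacle --- the mixed piece $\bigl(I_t^{\,2|x-y|}g\bigr)(y)$, which cannot be disposed of by pointwise maximal bounds alone --- and correctly locate the use of $p^*>\frac{nq}{n+sq}$ there. But note that the tool you reach for to close that gap (Fefferman--Stein vector-valued maximal inequality, ``equivalently the first--differences characterization of $\dot F^s_{p^*,q}$'') is precisely what \Cref{th:fspqchar} packages; so the ``direct'' route does not actually avoid that input, it just re-derives the relevant half of it in situ. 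In short: what the paper buys by quoting Prats--Saksman is a two-line proof; what your Hedberg argument buys is transparency about where $s<t$, $t<1$, and $p^*>\frac{nq}{n+sq}$ each enter, at the cost of reproducing a nontrivial vector-valued maximal estimate that the paper outsources. Both are correct; yours is a legitimate alternative exposition rather than an independent new proof.
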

For the proof see \Cref{s:weirdsob}.

We will also need the following Wente's inequality from \cite{MS18}.
\begin{lemma}[{\cite[Corollary 2.3]{MS18}}]\label{la:Wentewithestimate}
 Let $s\in(0,1)$, $p>1$, and let $p'$ be the H\"{o}lder conjugate of $p$. Assume moreover that $F'\in L^p(\Ep_{od}^1 \R)$ and $g\in W^{s,p'}(\R)$ with $\div_s F=0$.
Let $R$ be a linear operator such that for some $\Lambda>0$ satisfies
 
 \[
  |R[\vp]| \le \Lambda \|(-\Delta)^{\frac 14} \vp\|_{L^{(2,\infty)}(\R)},
 \]
 where $L^{(2,\infty)}(\R)$ denote the weak $L^2$ space.
 Then any distributional solution $u\in \dot{W}^{\frac12,2}(\R)$ to
\[
 \laph u = F \cdot d_s g + R   \quad \text{in } \R
\]
is continuous. Moreover if $\lim_{x \to \pm \infty} |u(x)|=0$, then we have the estimate
\begin{equation}\label{eq:nlocwente}
\|u\|_{L^\infty(\R)}+\|d_\frac12 u\|_{L^2(\Ep_{od}^1\R)} \aleq \|F\|_{L^p(\Ep_{od}^1 \R)} \|d_s g\|_{L^{p'}(\Ep_{od}^1 \R)} + \Lambda.
\end{equation}

\end{lemma}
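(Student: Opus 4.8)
The plan is to reduce the lemma to a single \emph{nonlocal div--curl (compensation) estimate} and then run a duality argument, the decay hypothesis only being needed to remove the additive-constant ambiguity of $(-\Delta)^{-\frac12}$ on $\R$. Concretely, the first and main step is to prove that if $\div_s F=0$ then for all $\varphi\in C_c^\infty(\R)$
\[
\Bigl|\int_\R (F\cdot d_s g)\,\varphi\Bigr|\ \aleq\ \|F\|_{L^p(\Ep_{od}^1\R)}\,\|d_s g\|_{L^{p'}(\Ep_{od}^1\R)}\,\bigl\|(-\Delta)^{\frac14}\varphi\bigr\|_{L^{(2,\infty)}(\R)}.
\]
I would obtain this by inserting the test function $\psi=g\varphi$ into the relation $\div_s F[\psi]=0$ --- after regularizing $g$ in $W^{s,p'}$, since $g$ itself is not admissible --- and using the Leibniz-type splitting $d_s(g\varphi)(x,y)=\varphi(x)\,d_s g(x,y)+g(y)\,d_s\varphi(x,y)$. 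This rewrites $\int_\R (F\cdot d_s g)\varphi$ as the three-term-commutator-type quantity $-\iint_{\R\times\R}F(x,y)\,g(y)\,d_s\varphi(x,y)\,\frac{\dx\dy}{|x-y|^n}$, which I would estimate by H\"older's inequality in Lorentz spaces together with the mapping properties of the fractional operators of \Cref{th:weirdsobolev} and of Riesz potentials between Lorentz spaces; this is where the cancellation $\div_s F=0$ is genuinely exploited. Since $R$ obeys the same bound by assumption, the right-hand side $h\coloneqq F\cdot d_s g+R$ of the equation satisfies $|h[\varphi]|\aleq\bigl(\|F\|_{L^p(\Ep_{od}^1\R)}\|d_s g\|_{L^{p'}(\Ep_{od}^1\R)}+\Lambda\bigr)\,\bigl\|(-\Delta)^{\frac14}\varphi\bigr\|_{L^{(2,\infty)}(\R)}$.

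Granting this, both estimates in \eqref{eq:nlocwente} follow by duality. Extending the bound on $h$ from $C_c^\infty(\R)$ to $\dot W^{\frac12,2}(\R)$ (legitimate because $\|(-\Delta)^{\frac14}\cdot\|_{L^{(2,\infty)}}\le\|(-\Delta)^{\frac14}\cdot\|_{L^2}$) and pairing $(-\Delta)^{\frac12}u=h$ with $u$ gives
\[
[u]_{W^{\frac12,2}(\R)}^2\ \aleq\ h[u]\ \aleq\ \bigl(\|F\|_{L^p}\|d_s g\|_{L^{p'}}+\Lambda\bigr)\,\bigl\|(-\Delta)^{\frac14}u\bigr\|_{L^{(2,\infty)}(\R)}\ \aleq\ \bigl(\cdots\bigr)\,[u]_{W^{\frac12,2}(\R)},
\]
so that $\|d_{\frac12}u\|_{L^2(\Ep_{od}^1\R)}=[u]_{W^{\frac12,2}(\R)}\aleq\|F\|_{L^p}\|d_s g\|_{L^{p'}}+\Lambda$. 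For the sup bound, take $\mu\in L^1(\R)$ with $\int_\R\mu=0$ and pair with $(-\Delta)^{-\frac12}\mu$ (after the usual approximation and mod-constants bookkeeping, the vanishing mean of $\mu$ killing the constant):
\[
\Bigl|\int_\R u\,\mu\Bigr|\ =\ \bigl|h\bigl[(-\Delta)^{-\frac12}\mu\bigr]\bigr|\ \aleq\ \bigl(\|F\|_{L^p}\|d_s g\|_{L^{p'}}+\Lambda\bigr)\,\bigl\|(-\Delta)^{-\frac14}\mu\bigr\|_{L^{(2,\infty)}(\R)}\ \aleq\ \bigl(\cdots\bigr)\,\|\mu\|_{L^1(\R)},
\]
the last inequality being the weak-type endpoint $(-\Delta)^{-\frac14}\colon L^1(\R)\to L^{(2,\infty)}(\R)$ of the Hardy--Littlewood--Sobolev inequality in dimension one. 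Taking the supremum over such $\mu$ yields $\osc u\aleq\|F\|_{L^p}\|d_s g\|_{L^{p'}}+\Lambda$, and then the hypothesis $\lim_{x\to\pm\infty}|u(x)|=0$ upgrades this to the claimed bound on $\|u\|_{L^\infty(\R)}$. Continuity of $u$ --- which uses no decay --- follows by running the same duality argument on bounded intervals, which produces a Campanato-type decay for the oscillation of $u$.

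The main obstacle is the compensation estimate of the first step; everything after it (the Lorentz H\"older inequality and duality, the energy identity, Hardy--Littlewood--Sobolev, and the density arguments needed to pair $g\varphi$, $u$, and $(-\Delta)^{-\frac12}\mu$ legitimately) is soft. The delicate point there is that the double integral $\iint F(x,y)\,g(y)\,d_s\varphi(x,y)\,\frac{\dx\dy}{|x-y|^n}$ does not close under a naive H\"older inequality: one must use the precise interplay between the singular kernel $|x-y|^{-n-s}$, the cancellation $\div_s F=0$, and the derivative gain of \Cref{th:weirdsobolev}. This is exactly the fractional three-term-commutator analysis that forms the technical core of \cite{MS18}, itself modeled on the work of Da Lio--Rivi\`ere on $(-\Delta)^{1/4}$-systems with antisymmetric potential.
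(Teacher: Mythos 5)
The paper does not prove this lemma itself: it cites \cite[Corollary~2.3]{MS18} and uses it as a black box, so there is no in-paper proof to compare your argument against. Evaluated on its own terms, your roadmap is structurally sound and matches the standard mechanism behind fractional Wente-type estimates: (i) a compensation estimate of the form $|\,(F\cdot d_s g)[\varphi]\,|\aleq\|F\|_{L^p}\|d_s g\|_{L^{p'}}\|(-\Delta)^{1/4}\varphi\|_{L^{(2,\infty)}}$ exploiting $\div_s F=0$; (ii) the $\dot W^{1/2,2}$ bound by testing $(-\Delta)^{1/2}u=h$ against $u$ and absorbing; (iii) the $L^\infty$ (oscillation) bound by duality against mean-zero $\mu\in L^1$, using the weak-type endpoint $(-\Delta)^{-1/4}\colon L^1(\R)\to L^{(2,\infty)}(\R)$, with the decay hypothesis promoting the oscillation bound to a sup bound; (iv) continuity by localizing the same argument. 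The Leibniz identity $d_s(g\varphi)=\varphi(x)\,d_s g+g(y)\,d_s\varphi$ and the regularization of $g$ before testing are both correct, and the extension of the bound on $h$ from $C_c^\infty$ to $\dot W^{1/2,2}$ via $\|(-\Delta)^{1/4}\cdot\|_{L^{(2,\infty)}}\le\|(-\Delta)^{1/4}\cdot\|_{L^2}$ is legitimate.

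The substantive caveat is that you do not actually prove step (i): you observe --- correctly --- that the rewritten quantity $-\iint F(x,y)\,g(y)\,d_s\varphi(x,y)\,\tfrac{\dx\dy}{|x-y|^n}$ does not close under naive H\"older and then point to ``the fractional three-term-commutator analysis that forms the technical core of \cite{MS18}''. Since the lemma you are asked to establish \emph{is} essentially \cite[Corollary~2.3]{MS18}, this makes your argument a correct reduction plus scaffolding rather than a self-contained proof. To make it complete you would need to carry out the commutator estimate explicitly: subtract an appropriate local average of $g$ (allowed because $\div_s F=0$ kills constants in the $g$-slot), decompose dyadically in $|x-y|$, and combine the gain from $\|d_s g\|_{L^{p'}(\Ep_{od}^1)}$ with the maximal/Lorentz estimates of \Cref{th:weirdsobolev}. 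Everything downstream of (i) is soft and correctly executed in your write-up.
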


Our proof will also be based on the following choice of a good gauge.

\begin{theorem}[{\cite[Theorem 4.4]{MS18}}]\label{th:gauge}
For $\Omega_{ij} = -\Omega_{ji} \in L^2(\Ep^1_{od}\R)$ there exists $P \in \dot{W}^{\frac{1}{2}}(\R,SO(N))$ such that
\[
 \div_{\frac{1}{2}} \Omega^P_{ij} = 0 \quad \mbox{for all }i,j \in \{1,\ldots,N\},
\]
where
\[
 \Omega^P = \frac{1}{2} \brac{d_{\frac{1}{2}}P(x,y) \brac{P^T(y) + P^T(x)} -  P(x) \Omega(x,y) P^T(y) - P(y) \Omega(x,y) P^T(x)}
\]
  and
  \begin{equation}\label{eq:Pestimateinte}
   [P]_{W^{\frac12,2}(\R)} \aleq \|\Omega\|_{L^2(\Ep_{od}^1 \R)}.
  \end{equation}
\end{theorem}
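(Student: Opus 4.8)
\textbf{Proof proposal for \Cref{th:gauge}.}

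The plan is to follow the classical Uhlenbeck--Rivi\`ere scheme adapted to the nonlocal setting: one minimizes a natural gauge-fixing functional over $\dot{W}^{\frac12,2}(\R,SO(N))$, shows a minimizer exists when $\|\Omega\|_{L^2}$ is small, and then derives the Euler--Lagrange equation, which turns out to be exactly $\div_\frac12 \Omega^P = 0$. Concretely, for $P \in \dot{W}^{\frac12,2}(\R, SO(N))$ set $\Omega^P$ as in the statement (note $\Omega^P$ is the symmetrized ``gauge transform'' of $\Omega$ under $P$, designed so that it reduces to the usual $P\Omega P^T - dP\, P^T$ in the local case), and consider minimizing $\mathcal{F}(P) \coloneqq \|\Omega^P\|_{L^2(\Ep^1_{od}\R)}^2$, or equivalently a functional whose critical points force the divergence-free condition. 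First I would record the algebraic identities: $\Omega^{PQ}$ composes correctly, $\Omega^{\id} = -\Omega$ (up to the symmetrization), and the pointwise bound $|\Omega^P(x,y)| \aleq |d_\frac12 P(x,y)| + |\Omega(x,y)|$, so that $\mathcal F(P) \aleq [P]_{W^{\frac12,2}}^2 + \|\Omega\|_{L^2}^2 < \infty$ whenever $P \in \dot{W}^{\frac12,2}$.

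The core analytic step is the \emph{a priori} estimate \eqref{eq:Pestimateinte}, which simultaneously gives coercivity of the minimization. Here one argues that on the (nonempty, since $P = \id$ is admissible) sublevel set the gauge energy controls $[P]_{W^{\frac12,2}}$: schematically, $[P]_{W^{\frac12,2}} = \|d_\frac12 P\|_{L^2} \le \|\Omega^P\|_{L^2} + \|\text{symmetrized } P\Omega P^T\|_{L^2} \le \|\Omega^P\|_{L^2} + \|\Omega\|_{L^2}$, using $|P| = 1$ pointwise for the last term. Thus minimizing sequences are bounded in $\dot{W}^{\frac12,2}$; by Rellich-type compactness (local compact embedding $\dot W^{\frac12,2} \hookrightarrow L^2_{\loc}$) and the pointwise constraint $P(x) \in SO(N)$ one extracts a limit $P$ still valued in $SO(N)$, and lower semicontinuity of the $L^2$-norm of $\Omega^P$ under weak $\dot W^{\frac12,2}$ convergence (the troublesome term $P\Omega P^T$ converges because $P_\ell \to P$ a.e.\ and $\Omega_\ell \equiv \Omega$ is fixed) gives a minimizer. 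The smallness $\|\Omega\|_{L^2} < \sigma$ is what keeps the minimizer in a neighborhood of $\id$ where the variational problem is nondegenerate and \eqref{eq:Pestimateinte} is genuinely an estimate of the form $[P]_{W^{\frac12,2}} \aleq \|\Omega\|_{L^2}$ rather than merely a bound.

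Next I would compute the first variation. Perturbing $P_t = e^{t\xi} P$ with $\xi \in C_c^\infty(\R, so(N))$ antisymmetric, one differentiates $\mathcal F(P_t)$ at $t=0$. The derivative of the $d_\frac12 P$-part of $\Omega^P$ in direction $\xi$ produces, after pairing against $\Omega^P$ and integrating, a term of the form $\int \Omega^P \cdot d_\frac12 \xi$ (this is where the symmetrization in the definition of $\Omega^P$ is essential — it makes the cross terms combine into a clean $d_\frac12$ tested against $\xi$), while the derivative of the $P\Omega P^T$-part contributes a term that, using antisymmetry of $\Omega$ and of $\xi$, cancels or recombines so that the Euler--Lagrange equation reads precisely $\int_\R \Omega^P_{ij}\cdot d_\frac12 \xi_{ij} = 0$ for all $\xi \in C_c^\infty(\R,so(N))$, i.e.\ $\div_\frac12 \Omega^P_{ij} = 0$ for each $i,j$ (the antisymmetry of $\Omega^P_{ij}$ in $(i,j)$ means testing against antisymmetric $\xi$ suffices). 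I expect the main obstacle to be exactly this computation: verifying that the first variation collapses to the stated divergence-free condition requires carefully tracking the $x \leftrightarrow y$ symmetrizations and the two places ($P(x)$ vs.\ $P(y)$, $P^T(x)$ vs.\ $P^T(y)$) where $P$ appears in $\Omega^P$, and checking that the "algebraic" terms (those not of $d_\frac12$-divergence form) cancel by antisymmetry of $\Omega$ rather than surviving. A secondary technical point is justifying that the minimizer is in $SO(N)$ and not merely in the convex hull — this follows from the pointwise constraint being closed under a.e.\ convergence — and that all the integrals in the first-variation computation converge, which is where one uses $\xi \in C_c^\infty$, $P \in \dot W^{\frac12,2} \cap L^\infty$, and $\Omega \in L^2(\Ep^1_{od}\R)$ together with H\"older's inequality in the $\frac{\dx\dy}{|x-y|^n}$ measure.
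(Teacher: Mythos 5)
First, a point of comparison: the paper does not prove \Cref{th:gauge} at all --- it is quoted verbatim from \cite[Theorem 4.4]{MS18} --- so your proposal has to be measured against the proof in that reference. Your overall strategy (direct minimization of the gauge energy $\mathcal F(P)=\|\Omega^P\|_{L^2(\Ep_{od}^1\R)}^2$ over $\dot W^{\frac12,2}(\R,SO(N))$, comparison with the competitor $P=\id$ for which $\Omega^{\id}=-\Omega$, then the first variation along $P_t=e^{t\xi}P$ producing $\div_{\frac12}\Omega^P=0$ for $so(N)$-valued test directions) is indeed the strategy of the cited proof, which adapts Rivi\`ere's Lemma A.3.

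The genuine gap is in your coercivity step. You assert $[P]_{W^{\frac12,2}}=\|d_{\frac12}P\|_{L^2}\le\|\Omega^P\|_{L^2}+\|\text{sym.\ }P\Omega P^T\|_{L^2}$ ``using $|P|=1$''. But the $d_{\frac12}P$-part of $\Omega^P$ is the \emph{symmetrized} product, and
\[
\tfrac12\, d_{\frac12}P(x,y)\brac{P^T(x)+P^T(y)}=\frac{P(x)P^T(y)-P(y)P^T(x)}{2|x-y|^{\frac12}},
\]
i.e.\ the antisymmetric part of $Q/|x-y|^{\frac12}$ with $Q\coloneqq P(x)P^T(y)\in SO(N)$. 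Its norm is \emph{not} comparable to $|d_{\frac12}P(x,y)|=|Q-I|/|x-y|^{\frac12}$: it vanishes whenever $Q$ is symmetric (e.g.\ $Q=-I\in SO(2)$, a rotation by $\pi$), while $|Q-I|$ is then of order one. In Rivi\`ere's local setting one has the exact identity $|\nabla P\,P^T|=|\nabla P|$, so coercivity is immediate; here the symmetrization --- which is forced on you if $\Omega^P$ is to be $so(N)$-valued and the Euler--Lagrange equation is to close up --- destroys that identity, and the lower bound $|Q-Q^T|\ageq|Q-I|$ holds only on the region where $|P(x)-P(y)|$ is bounded away from its maximal values (one has $|Q-I|^2=\tfrac14|Q-Q^T|^2+\tfrac14|Q+Q^T-2I|^2$ with $|Q+Q^T-2I|\le|Q-I|^2$, which is useful only when $|Q-I|$ is not large). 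Controlling the minimizing sequence on the complementary ``large jump'' region is precisely the new difficulty of the nonlocal problem and the part of the argument your sketch skips. Two smaller issues: you introduce a smallness hypothesis $\|\Omega\|_{L^2}<\sigma$ that is not in the statement (both the existence and the estimate \eqref{eq:Pestimateinte} are claimed for arbitrary antisymmetric $\Omega\in L^2(\Ep_{od}^1\R)$, and the smallness in \Cref{th:main} is imposed later, not here); and the first-variation computation, which you yourself flag as the crux of the $x\leftrightarrow y$ and $P(x)$ versus $P(y)$ bookkeeping, is only announced, not carried out.
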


\section{Proof of Theorem~\ref{th:main}}
In this section we prove \Cref{th:main}.   We will be looking for an $A$ in the form $A=(I+\eps)P$, where $P$ is chosen to be the good gauge from \Cref{th:gauge}. 
The idea to take perturbation of rotations of the form $(I+\eps)P$ has been taken from \cite{R12} in the context of local Schr\"odinger equations with antisymmetric potentials. This has been also  exploited in \cite{DLR21}. 

\begin{lemma}\label{la:rewrite1}
Assume that $A = (I+\eps) P$. 

Then for
\[
 \Omega^P(x,y) = \frac{1}{2} \brac{d_{\frac{1}{2}}P(x,y) \brac{P^T(y) + P^T(x)} -  P(x) \Omega(x,y) P^T(y) - P(y) \Omega(x,y) P^T(x)}
\]
we have
\[
A(x)\Omega(x,y)\, -d_{\frac{1}{2}} A(x,y)
=- (I+\eps(x))\, \Omega^P(x,y)\, P(y)
- d_{\frac{1}{2}} \eps(x,y)\, P(y)
+R_\eps(x,y),
\]
where $R_\eps$ is given by the formula
\begin{equation}\label{eq:Restdefinition}
\begin{split}
R_\eps(x,y) \coloneqq \frac{1}{2} (I+\eps(x)) \bigg( &d_{\frac{1}{4}} P(x,y)\, d_{\frac{1}{4}} P^T(x,y) \\
&-P(x)\, \Omega(x,y)\, \brac{P^T(x)-P^T(y)}\\
&+\brac{P(x)-P(y)}\,\Omega(x,y)\, P^T(x)
\bigg ) P(y).
\end{split}
\end{equation}
\end{lemma}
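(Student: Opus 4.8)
The plan is to prove Lemma~\ref{la:rewrite1} by a direct but carefully organized algebraic computation, expanding $A\Omega - d_{\frac12} A$ with $A = (I+\eps)P$ and matching it against the claimed right-hand side. First I would write $A(x)\Omega(x,y) = (I+\eps(x))P(x)\Omega(x,y)$ and, using the product-type Leibniz rule for $d_{\frac12}$, split
\[
 d_{\frac12} A(x,y) = d_{\frac12}\brac{(I+\eps)P}(x,y) = (I+\eps(x))\, d_{\frac12} P(x,y) + d_{\frac12}\eps(x,y)\, P(y).
\]
That immediately peels off the term $-d_{\frac12}\eps(x,y)\,P(y)$ exactly as in the statement, so the remaining task is to show
\[
 (I+\eps(x))\brac{P(x)\Omega(x,y) - d_{\frac12}P(x,y)} = -(I+\eps(x))\Omega^P(x,y)P(y) + R_\eps(x,y),
\]
i.e. it suffices to factor out $(I+\eps(x))$ on the left and $P(y)$ on the right and verify the purely $P,\Omega$-identity
\[
 P(x)\Omega(x,y) - d_{\frac12}P(x,y) = -\Omega^P(x,y)P(y) + \widetilde R(x,y),
\]
where $\widetilde R$ is $R_\eps$ with the $(I+\eps(x))$ stripped off.

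Second, I would substitute the definition of $\Omega^P$ and compute $-\Omega^P(x,y)P(y)$, using $P(y)P^T(y) = I$ (since $P$ is $SO(N)$-valued) to simplify $d_{\frac12}P(x,y)P^T(y)P(y) = d_{\frac12}P(x,y)$ and $P(x)\Omega(x,y)P^T(y)P(y) = P(x)\Omega(x,y)$. The cross terms involving $P^T(x)P(y)$ and $P(y)\Omega P^T(x)P(y)$ do not collapse, and these are precisely what must be collected into $\widetilde R$. The term $d_{\frac12}P(x,y)P^T(x)P(y)$ should be rewritten as $d_{\frac12}P(x,y) + d_{\frac12}P(x,y)(P^T(x)-P^T(y))P(y)$; using $d_{\frac12}P(x,y) = d_{\frac14}P(x,y)\,|x-y|^{-1/4}$ and the standard identity $d_{\frac12}P(x,y)\,(P^T(x)-P^T(y)) = d_{\frac14}P(x,y)\,d_{\frac14}P^T(x,y)$ (up to sign bookkeeping from $P^T(x)-P^T(y) = -|x-y|^{1/4}d_{\frac14}P^T(x,y)$) produces the $d_{\frac14}P\,d_{\frac14}P^T$ term appearing in \eqref{eq:Restdefinition}. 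The remaining $\Omega$-terms, after adding and subtracting $P(x)\Omega(x,y)P^T(x)P(y)$ and $P(y)\Omega(x,y)P^T(x)P(y)$ appropriately, reassemble into the two $\Omega$-lines of $R_\eps$, namely $-P(x)\Omega(x,y)(P^T(x)-P^T(y))P(y)$ and $(P(x)-P(y))\Omega(x,y)P^T(x)P(y)$, again using $P(x)P^T(x)=I$ to recover the leading $P(x)\Omega(x,y)$ that must cancel.

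Third, I would carefully track the factor of $\tfrac12$ and the symmetrization: since $\Omega^P$ is the symmetrized expression $\tfrac12(\cdots + \cdots)$, the computation should be done by writing $P(x)\Omega - d_{\frac12}P = \tfrac12\brac{(P(x)\Omega - d_{\frac12}P) + (P(x)\Omega - d_{\frac12}P)}$ and treating one copy so as to match the ``$P^T(y)$'' half of $\Omega^P$ and the other to match the ``$P^T(x)$'' half, so that the two $R_\eps$ correction lines emerge symmetrically with the correct $\tfrac12$ prefactor. I expect the main obstacle to be purely bookkeeping: keeping the signs straight in the identity $P^T(x) - P^T(y) = -|x-y|^{1/4}\,d_{\frac14}P^T(x,y)$ versus $d_{\frac14}P(x,y) = |x-y|^{-1/4}(P(x)-P(y))$, and making sure that every spurious term generated by inserting $P P^T = I$ is matched against a term of $R_\eps$ rather than left dangling. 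There is no analytic difficulty here — all manipulations are algebraic identities valid pointwise in $(x,y)$ off the diagonal, requiring only that the relevant products of measurable functions be well-defined, which is the standing hypothesis. Once the $P,\Omega$-identity is checked, multiplying back on the left by $(I+\eps(x))$ gives $R_\eps$ in the stated form and completes the proof.
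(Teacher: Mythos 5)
Your proof is correct and is essentially the same direct algebraic verification as the paper's: both peel off $-d_{\frac12}\eps\,P(y)$ with the Leibniz rule, factor out $(I+\eps(x))$ and $P(y)$ using $PP^T=I$, and then match terms against $\Omega^P$, differing only in whether one symmetrizes $d_{\frac12}P\,P^T(y)-P(x)\Omega P^T(y)$ to reveal $\Omega^P$ (the paper) or expands $-\Omega^P P(y)$ and collects the leftover into $\widetilde R$ (you) — two organizations of the same computation. One small slip: in your parenthetical you wrote $P^T(x)-P^T(y)=-|x-y|^{1/4}d_{\frac14}P^T(x,y)$, but the correct relation has no minus sign; the identity $d_{\frac12}P\,(P^T(x)-P^T(y))=d_{\frac14}P\,d_{\frac14}P^T$ that you actually use is nevertheless stated correctly.
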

\begin{proof}
Recall that
\[
 d_\frac12(fg)(x,y) = d_\frac12 f(x,y)\, g(y) + f(x) d_\frac12 g(x,y). 
\]
Thus, applying this to $d_\frac12 ((I+\eps)P)(x,y)$ we get
\begin{equation}\label{eq:rewriteAOdiv}
\begin{split}
A(x)&\Omega(x,y) - d_\frac12 A(x,y)\\
&=(I+\eps(x)) P(x)\Omega(x,y)\, -d_{\frac{1}{2}} \brac{(I+\eps) P}(x,y)\\
&=(I+\eps(x)) \brac{P(x)\, \Omega(x,y)\, -d_{\frac{1}{2}} P(x,y)} -d_{\frac{1}{2}} \eps(x,y)\, P(y) \\
&=-(I+\eps(x)) \brac{d_{\frac{1}{2}} P(x,y)\, P^T(y)-P(x)\, \Omega(x,y)\, P^T(y)} P(y) -d_{\frac{1}{2}} \eps(x,y)\, P(y).
\end{split}
\end{equation}
Next we observe that
\begin{equation}\label{eq:rewriteplugOmegaP}
\begin{split}
&d_{\frac{1}{2}} P(x,y)\, P^T(y)-P(x)\, \Omega(x,y)\, P^T(y)\\
&=\frac{1}{2}\brac{d_{\frac{1}{2}} P(x,y)\, \brac{P^T(x)+P^T(y)}-P(x)\, \Omega(x,y)\, P^T(y)-P(y)\,\Omega(x,y)\, P^T(x)}\\
&\quad -\frac{1}{2} \Big(d_{\frac{1}{2}} P(x,y)\, \brac{P^T(x)-P^T(y)} -P(x)\, \Omega(x,y)\, \brac{P^T(x)-P^T(y)}\\
&\phantom{\quad -\frac{1}{2} \quad}+\brac{P(x)-P(y)}\,\Omega(x,y)\, P^T(x)\Big).
\end{split}
\end{equation}

That is, plugging in \eqref{eq:rewriteplugOmegaP} into \eqref{eq:rewriteAOdiv} we get the claim for 
\begin{equation*}
\begin{split}
 R_\eps(x,y) \coloneqq \frac{1}{2} (I+\eps(x)) \bigg( &d_{\frac{1}{4}} P(x,y)\, d_{\frac{1}{4}} P^T(x,y) \\
&-P(x)\, \Omega(x,y)\, \brac{P^T(x)-P^T(y)}\\
&+\brac{P(x)-P(y)}\,\Omega(x,y)\, P^T(x)
\bigg ) P(y).
\end{split}
\end{equation*}
\end{proof}

\begin{lemma}\label{le:aisvanishing}
	Assume that we have $\eps\in L^\infty\cap \dot{W}^{1/2,2}(\R),\, a\in \dot{W}^{1/2,2}(\R)$, and $B\in L^2(\Ep_{od}^1 \R)$ satisfying the equations
	\begin{equation}\label{eq:nlocwitha}
	- (I+\eps(x))\, \Omega^P(x,y)\, P(y)
	- d_{\frac{1}{2}} \eps(x,y)\, P(y)
	+R_\eps(x,y) = d_\frac12 a(x,y) + B(x,y)
	\end{equation}
	and
	\begin{equation}\label{eq:nlocwithPwithouta}
	\begin{split}
	- &\div_{\frac{1}{2}}\brac{(I+\eps(x))\, \Omega^P(x,y)}
	- \div_{\frac{1}{2}}\brac{d_{\frac{1}{2}} \eps(x,y)}
	+\div_{\frac{1}{2}}(R_\eps(x,y) P^T(y))\\
	&= \div_{\frac{1}{2}} \brac{B(x,y) P^T(y)},
	\end{split}
	\end{equation}
	with
	\begin{equation}\label{eq:assumptiononP}
	[P]_{W^{1/2,2}(\R)} < \sigma.
	\end{equation}
	Then, for sufficiently small $\sigma$ we have $a=const$.
\end{lemma}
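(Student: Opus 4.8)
The plan is to collapse the two hypotheses \eqref{eq:nlocwitha}--\eqref{eq:nlocwithPwithouta} into one \emph{homogeneous} equation for $a$ and then exploit its critical structure together with the smallness \eqref{eq:assumptiononP}.

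\emph{Step 1 (reduction).} Since $P(y)\in SO(N)$ we have $P(y)P^T(y)=I$, so multiplying \eqref{eq:nlocwitha} on the right by $P^T(y)$ gives
\[
-(I+\eps(x))\,\Omega^P(x,y)-d_{\frac12}\eps(x,y)+R_\eps(x,y)P^T(y)=d_{\frac12}a(x,y)\,P^T(y)+B(x,y)P^T(y).
\]
Applying $\div_{\frac12}$ and subtracting \eqref{eq:nlocwithPwithouta}, all terms carrying $\eps$, $R_\eps$ and $B$ cancel, leaving
\[
\div_{\frac12}\!\big(d_{\frac12}a\,P^T(y)\big)=0\qquad\text{in }\D'(\R).
\]

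\emph{Step 2 (self-improving equation).} Expanding this with the product rule \eqref{eq:productrulewithy} and $\div_{\frac12}d_{\frac12}=\laph$ yields $\laph a\,P^T(x)=d_{\frac12}a\cdot d_{\frac12}P^T$, hence, multiplying on the right by $P(x)$ and using $P^T(x)P(x)=I$,
\[
\laph a=\big(d_{\frac12}a\cdot d_{\frac12}P^T\big)\,P=\int \frac{(a(x)-a(y))\big(I-P^T(y)P(x)\big)}{|x-y|^{2}}\,\dy .
\]
Decomposing $I-P^T(y)P(x)=\tfrac12(P^T(x)-P^T(y))(P(x)-P(y))-\tfrac12(P^T(x)+P^T(y))(P(x)-P(y))$ and noting that the second piece equals $|x-y|^{1/2}\,\Omega^P_0(x,y)$ with $\Omega^P_0(x,y):=\tfrac12 d_{\frac12}P^T(x,y)\big(P(x)+P(y)\big)$ — which is nothing but the $\Omega^P$ of \Cref{th:gauge} in the case $\Omega\equiv0$ applied to the rotation $P^T$, so that $\div_{\frac12}\Omega^P_0=0$ and $\|\Omega^P_0\|_{L^2(\Ep^1_{od}\R)}\aleq[P^T]_{W^{\frac12,2}(\R)}=[P]_{W^{\frac12,2}(\R)}<\sigma$ — one rewrites
\[
\laph a=\Omega^P_0\cdot d_{\frac12}a+\mathcal R,\qquad \mathcal R(x):=\tfrac12\!\int d_{\frac12}a(x,y)\,d_{\frac14}P^T(x,y)\,d_{\frac14}P(x,y)\,\tfrac{\dy}{|x-y|},
\]
so the remainder is \emph{quadratic} in the differences of $P$: pointwise $|\mathcal R|(x)\aleq|\mathcal D_{\frac12,2}a|(x)\,|\mathcal D_{\frac14,4}P|(x)^2$.

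\emph{Step 3 (estimate and conclusion).} The leading term $\Omega^P_0\cdot d_{\frac12}a$ is exactly of the form treated in \Cref{la:Wentewithestimate}, with $F=\Omega^P_0$ (divergence free, $\|F\|_{L^2}<\sigma$) and $g=a$. For $\mathcal R$, the Sobolev embedding \eqref{eq:weirdsobolev} of \Cref{th:weirdsobolev} with $(s,t,p,p^*,q)=(\tfrac14,\tfrac12,2,4,4)$ gives $\|\,|\mathcal D_{\frac14,4}P|\,\|_{L^4(\R)}\aleq\|\laps{\frac12}P\|_{L^2(\R)}\aleq[P]_{W^{\frac12,2}(\R)}<\sigma$, and, after the symmetrisation $\mathcal R[\varphi]=\tfrac14\int\!\int\frac{(a(x)-a(y))(P^T(x)-P^T(y))(P(x)-P(y))(\varphi(x)-\varphi(y))}{|x-y|^{2}}\,\dx\dy$, the Lorentz refinement \eqref{eq:weridsobolevlorentz} provides the bound $|\mathcal R[\varphi]|\aleq\sigma^2[a]_{W^{\frac12,2}(\R)}\|\lapv\varphi\|_{L^{(2,\infty)}(\R)}$ needed for the error term. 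Then \Cref{la:Wentewithestimate} (or, alternatively, testing $\laph a=\Omega^P_0\cdot d_{\frac12}a+\mathcal R$ directly against $a$ and using \Cref{th:weirdsobolev}) gives
\[
[a]_{W^{\frac12,2}(\R)}\aleq\big(\|\Omega^P_0\|_{L^2(\Ep^1_{od}\R)}+\sigma^2\big)[a]_{W^{\frac12,2}(\R)}\aleq C\sigma\,[a]_{W^{\frac12,2}(\R)} ,
\]
and choosing $\sigma$ so small that $C\sigma<1$ forces $[a]_{W^{\frac12,2}(\R)}=0$, i.e. $a\equiv\mathrm{const}$.

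The main obstacle is Step~2--3: the equation for $a$ is \emph{critical} (its right-hand side is a product of two $\dot W^{\frac12,2}$-type factors), so a plain $L^p$ estimate — which only puts $\mathcal R$ in $L^1(\R)$ — does not control $[a]_{W^{\frac12,2}}$. One must isolate a genuinely divergence-free factor $\Omega^P_0$ in order to invoke the compensated-compactness estimate \Cref{la:Wentewithestimate}, and then verify that the remainder really gains the quadratically small factor $d_{\frac14}P\,d_{\frac14}P^T$ and fits into the weak-$L^2$ duality required there; this bookkeeping, resting on the algebraic identities behind \Cref{la:rewrite1} and on the Lorentz-space Sobolev embedding \Cref{th:weirdsobolev}, is the technically delicate part.
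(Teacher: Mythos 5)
Your Step 1 is exactly the paper's: multiplying \eqref{eq:nlocwitha} on the right by $P^T(y)$, applying $\div_{\frac12}$, and subtracting \eqref{eq:nlocwithPwithouta} gives $\div_{\frac12}\brac{d_{\frac12}a\,P^T(y)} = 0$. The subsequent product-rule expansion producing $\laps{1}a\,P^T(x) = d_{\frac12}a\cdot d_{\frac12}P^T$ and the algebraic identity $I - P^T(y)P(x) = \tfrac12(P^T(x)-P^T(y))(P(x)-P(y)) - \tfrac12(P^T(x)+P^T(y))(P(x)-P(y))$ are also correct.

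The gap is the claim that $\div_{\frac12}\Omega^P_0 = 0$ for $\Omega^P_0(x,y) = \tfrac12 d_{\frac12}P^T(x,y)(P(x)+P(y)) = \tfrac12\,\frac{P^T(x)P(y)-P^T(y)P(x)}{|x-y|^{1/2}}$. You justify this by saying $\Omega^P_0$ is ``the $\Omega^P$ of \Cref{th:gauge} in the case $\Omega \equiv 0$ applied to the rotation $P^T$.'' But \Cref{th:gauge} asserts the \emph{existence} of a rotation $P$, constructed from a given $\Omega$, for which the \emph{entire} combination
\[
\Omega^P = \tfrac12\brac{d_{\frac12}P\,(P^T(y)+P^T(x)) - P(x)\,\Omega\,P^T(y) - P(y)\,\Omega\,P^T(x)}
\]
is $\frac12$-divergence free. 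It does not say that the $d_{\frac12}P$-piece is divergence free on its own, nor that a $P$ gauging a nonzero $\Omega$ also gauges the zero potential. Indeed $\div_{\frac12}\Omega^P = 0$ is a single scalar constraint which couples the $d_{\frac12}P$-part to the $\Omega$-part; they do not vanish separately. For the $P$ you have (chosen for the actual $\Omega$), there is in general no reason that $\div_{\frac12}\Omega^P_0 = 0$.

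Once this claim falls, both routes you propose in Step 3 collapse: without $\div_{\frac12}\Omega^P_0 = 0$, the leading term $\Omega^P_0\cdot d_{\frac12}a$ is no longer a div-curl pair, so \Cref{la:Wentewithestimate} does not apply; and testing $\laps{1}a = \Omega^P_0\cdot d_{\frac12}a + \mathcal R$ directly against $a$ requires an $L^\infty$ bound on $a$ which $\dot W^{\frac12,2}(\R)$ does not provide (this space embeds only into $\mathrm{BMO}$, not $L^\infty$). The paper avoids this by \emph{manufacturing} the divergence-free factor rather than trying to identify one inside the algebra: it applies the nonlocal Hodge decomposition (\Cref{la:hodge}) to $d_{\frac12}a\,P^T(y) = d_{\frac12}\tilde a + \tilde B$, uses $\div_{\frac12}\brac{d_{\frac12}a\,P^T(y)}=0$ to force $\laps{1}\tilde a = 0$ and hence $\tilde a = \text{const}$, concludes $d_{\frac12}a = \tilde B\,P(y)$, and then $\laps{1}a = -\tilde B\cdot d_{\frac12}P$ with $\tilde B$ \emph{genuinely} divergence free by construction; the Wente estimate then closes the smallness argument exactly as in your Step 3. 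Your quadratic remainder bookkeeping via $d_{\frac14}P\,d_{\frac14}P^T$ and \Cref{th:weirdsobolev} is, modulo the missing exponent bookkeeping, plausible, but it is moot given the failure of the leading-term structure.
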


\begin{proof}
	We multiply \eqref{eq:nlocwitha} by $P^T(y)$ from the right and take the $\frac12$-divergence on both sides, then subtracting \eqref{eq:nlocwithPwithouta} we obtain
	\begin{equation}\label{eq:nlocdivgradientaP}
	\div_\frac12 (d_\frac12 a(x,y)P^T(y)) = 0.
	\end{equation}
	We use nonlocal Hodge decompostion \Cref{la:hodge} and get the existence of functions $\tilde a \in \dot{W}^{\frac12,2}(\R)$, $\tilde B\in L^2(\Ep_{od}^1 \R)$ \change{such that}
	\begin{equation}\label{eq:nlonhodgefora}
	d_\frac12 a(x,y)P^T(y) = d_\frac12 \tilde a (x,y) + \tilde B(x,y),
	\end{equation}
	and (recall $|P|=1$)
	\begin{equation}\label{eq:nlochodgeforaBtildeestimate}
	\div_\frac12 \tilde B =0 \quad \text{and } \quad \|\tilde B\|_{L^2(\Ep_{od}^1 \R)}  \aleq \|d_\frac12 a\|_{L^2(\Ep_{od}^1 \R)}.
	\end{equation}
	 Thus, taking the $\frac12$-divergence in \eqref{eq:nlonhodgefora} we obtain
	\[
	0 = \div_\frac12 (d_\frac12 a(x,y)P^T(y)) = \div_\frac12 (d_\frac12 \tilde a (x,y) + \tilde B(x,y)) = \div_\frac12 (d_\frac12 \tilde a) = \laps{1} \tilde a. 
	\]
	This gives, $\laps{1}\tilde a =0$, \change{thus $\tilde a$ is constant and without loss of generality we can take} $\tilde a = 0$\change{, see also \cite[Theorem 1.1]{Fall-2016}}. Thus \eqref{eq:nlonhodgefora} becomes
	\[
	d_\frac12 a(x,y)P^T(y) = \tilde B(x,y).
	\]
	That is
	\[
	d_\frac12 a(x,y) = \tilde B(x,y) P(y).
	\]
	Taking the $\frac12$-divergence we obtain by \Cref{la:prodrule}
	\begin{equation}\label{eq:}
	\laps{1} a = \change{-\tilde B} \cdot d_\frac12 P, 
	\end{equation}
	since on the righ-hand side we have a \emph{div-curl} term we can apply fractional Wente's inequality, \Cref{la:Wentewithestimate}, and obtain from \eqref{eq:nlocwente}
	\[
	 \|d_\frac 12 a\|_{L^2(\Ep_{od}^1 \R)} \aleq \|\tilde B\|_{L^2(\Ep_{od}^1 \R)} \|d_\frac12 P\|_{L^2(\Ep_{od}^1 \R)}.
\]
Combining this with \eqref{eq:nlochodgeforaBtildeestimate} and \eqref{eq:assumptiononP} we get
\[
 \|d_\frac 12 a\|_{L^2(\Ep_{od}^1 \R)} \aleq \sigma \|d_\frac 12 a\|_{L^2(\Ep_{od}^1 \R)},
\]
which implies for sufficiently small $\sigma$ that
\[
 \|d_\frac 12 a\|_{L^2(\Ep_{od}^1 \R)} = [a]_{W^{1/2,2}(\R)}=0
\]
and thus $a \equiv const$.
\end{proof}
Now we will focus on showing that there exists a solution to the equations \eqref{eq:nlocwitha} and \eqref{eq:nlocwithPwithouta}. We will do this by using the Banach fixed point theorem.

\begin{proposition}\label{pr:nlocfixedpoint}
Let $\Omega\in L^2(\Ep_{od}^1 \R)$ be anitsymmetric. There is a number $0<\sigma\ll1$ such that the following holds:

Take $P\in \dot{W}^{\frac12,2}(\R,SO(N))$ and $\Omega^P\in L^2(\Ep_{od}^1\R)$ from \Cref{th:gauge}. Let us assume that
\begin{equation}\label{eq:smallnessPandOmega}
 [P]_{W^{1/2,2}(\R)} + \|\Omega\|_{L^2(\Ep_{od}^1 \R)} < \sigma.
\end{equation}
Then, there exist $\eps\in L^\infty\cap \dot{W}^{1/2,2}(\R)$, $a\in \dot{W}^{1/2,2}(\R)$, and $B\in L^2(\Ep_{od}^1 \R)$ that solve the equations
 	\begin{equation}
 	\left\{
 	\begin{array}{l}
	- (I+\eps(x))\, \Omega^P(x,y) P(y)
	\change{-} d_{\frac{1}{2}} \eps(x,y) P(y)
	+R_\eps(x,y) = d_\frac12 a(x,y) + B(x,y) \\
	- \div_{\frac{1}{2}}\brac{(I+\eps(x)) \Omega^P(x,y)}
	\change{-} \div_{\frac{1}{2}}(d_{\frac{1}{2}} \eps(x,y))
	+\div_{\frac{1}{2}}(R_\eps(x,y) P^T(y)) = \div_{\frac{1}{2}} \brac{B P^T(y)},
	\end{array}
	\right.
	\end{equation}
	where $R_\eps$ is defined in \eqref{eq:Restdefinition}.
	
	Moreover, $\eps$ satisfies the estimate
	\begin{equation}\label{eq:eps-estimates}
	 \|\eps\|_{L^\infty(\R)} + [\eps]_{W^{\frac12,2}(\R)} \aleq \|\Omega\|_{L^2(\Ep_{od}^1 \R)}.
	\end{equation}
\end{proposition}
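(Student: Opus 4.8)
The plan is to reformulate the coupled system \eqref{eq:nlocwitha}--\eqref{eq:nlocwithPwithouta} as a fixed point problem for the pair $(\eps, a)$ and to apply the Banach fixed point theorem in the space $X \coloneqq (L^\infty \cap \dot W^{1/2,2})(\R) \times \dot W^{1/2,2}(\R)$, equipped with a norm scaled so that the closed ball of radius $\sim \|\Omega\|_{L^2(\Ep_{od}^1\R)}$ is mapped into itself. Given a current guess $(\bar\eps, \bar a)$, I would first use the nonlocal Hodge decomposition (\Cref{la:hodge}) to define $B = B(\bar\eps, \bar a)$ as the divergence-free part needed so that \eqref{eq:nlocwitha} holds; concretely, $d_\frac12 a + B$ is the Hodge decomposition of the left-hand side of \eqref{eq:nlocwitha} with data built from $\bar\eps$, so $B$ is determined by $\bar\eps$ alone up to the $d_\frac12 a$ term, and $\|B\|_{L^2(\Ep_{od}^1\R)}$ is controlled by the $L^2$-norm of $-(I+\bar\eps)\Omega^P P - d_\frac12\bar\eps\, P + R_{\bar\eps}$. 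Then the second equation \eqref{eq:nlocwithPwithouta}, after substituting this $B$, becomes an equation of the form $\laps{1}\eps = (\text{div-curl-type right-hand side depending on } \bar\eps, B)$, which I solve for the new $\eps$ using the fractional Wente estimate \Cref{la:Wentewithestimate} (writing the right-hand side as $F \cdot d_s g + R$ with $\div_s F = 0$ coming from the Hodge-decomposed pieces and from \Cref{th:gauge}, i.e.\ $\div_\frac12 \Omega^P = 0$). Finally the new $a$ is recovered from \eqref{eq:nlocwitha} via Hodge decomposition again, or simply from the $d_\frac12 a$ component; one should note that at the fixed point, \Cref{le:aisvanishing} forces $a$ to be constant, so the relevant output is really $\eps$ and $B$.

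The key steps, in order, are: (i) set up the solution operator $\mathcal{T}\colon (\bar\eps,\bar a) \mapsto (\eps, a)$ by the two-stage Hodge/Wente procedure above, carefully isolating the terms that are linear in $\eps$ (namely $\eps\,\Omega^P$, $d_\frac12\eps$, and the $\eps$-dependent part of $R_\eps$) from the affine constant term (the pure $\Omega^P$, $R_0$ pieces); (ii) verify the self-mapping property on $\overline{B_{C\|\Omega\|}(0)} \subset X$: here one estimates $\|\Omega^P\|_{L^2} \aleq \|\Omega\|_{L^2} + [P]_{W^{1/2,2}} \aleq \|\Omega\|_{L^2}$ by \Cref{th:gauge}, estimates $\|R_\eps\|_{L^2(\Ep_{od}^1\R)} \aleq (1+\|\eps\|_{L^\infty})([P]_{W^{1/2,2}}^2 + [P]_{W^{1/2,2}}\|\Omega\|_{L^2}) \aleq \sigma\|\Omega\|_{L^2}$ using that $d_\frac14 P\, d_\frac14 P^T$ and the $P(x)-P(y)$ factors each carry a $[P]_{W^{1/2,2}}$, then feeds everything through the Wente constant so the output ball has radius $\aleq \|\Omega\|_{L^2}$; (iii) verify the contraction property: for two inputs the difference satisfies the same type of equation with right-hand side carrying at least one factor of $\sigma$ (from $[P]_{W^{1/2,2}}$, $\|\Omega^P\|_{L^2}$, or $\|\eps_1\|_{L^\infty}+\|\eps_2\|_{L^\infty} \aleq \sigma$) times the difference of the inputs, so for $\sigma$ small enough $\mathcal T$ is a contraction; (iv) conclude existence of a fixed point $(\eps, a)$ solving the system, with the estimate \eqref{eq:eps-estimates} coming directly from the self-mapping radius bound.

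The main obstacle I expect is step (iii) combined with the bookkeeping in step (i): one must check that \emph{every} term in the difference $\mathcal T(\bar\eps_1,\bar a_1) - \mathcal T(\bar\eps_2,\bar a_2)$ genuinely comes with a small prefactor. The dangerous terms are those that are linear in $\eps$ with an $O(1)$ coefficient — there is no such term, because $\eps$ always appears multiplied either by $\Omega^P$ (small), by $d_\frac12(\cdot)$ inside a Wente-type div-curl pairing against $d_\frac12 P$ (small), or inside $R_\eps$ where it multiplies quadratic-in-$P$ quantities (small) — but verifying this requires expanding \eqref{eq:Restdefinition} and \eqref{eq:nlocwithPwithouta} term by term and invoking \Cref{la:prodrule} to move $\frac12$-divergences around. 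A secondary subtlety is ensuring the right-hand side of the $\eps$-equation always has the div-curl structure $F\cdot d_s g + R$ with $\div_s F = 0$ required by \Cref{la:Wentewithestimate}: the pieces $\div_\frac12(B P^T)$ and $\div_\frac12(R_\eps P^T)$ must be rewritten using \eqref{eq:productrulewithy} so that the genuinely non-div-free remainder $R$ is bounded in the dual norm $\|(-\Delta)^{1/4}\vp\|_{L^{(2,\infty)}}$, which is where \Cref{th:weirdsobolev} (the improved Sobolev embedding) enters to control the lower-order terms.
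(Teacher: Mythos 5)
Your proposal is correct and follows essentially the same route as the paper's proof: Hodge-decompose the left-hand side of \eqref{eq:nlocwitha} to define $B(\eps)$, solve the second equation for a new $\eps$ via the fractional Wente estimate \Cref{la:Wentewithestimate} (recognizing the div-curl pairings $B(\eps)\cdot d_\frac12 P^T$ and $d_\frac12\eps\cdot(\Omega^P)^\ast$ after applying \Cref{la:prodrule}, and treating $\div_\frac12(R_\eps P^T)$ as the $\Lambda$-remainder controlled by \Cref{le:Restimates}), and close by Banach's fixed point theorem. The only small inefficiency is that you carry $a$ in the fixed-point state space; since $a(\eps)$ is uniquely determined (up to a constant) by $\eps$ through the Hodge decomposition, the paper iterates only in $\eps \in L^\infty \cap \dot W^{1/2,2}(\R)$ and moreover establishes a global contraction rather than a self-mapping on a ball, reading off the bound \eqref{eq:eps-estimates} from the fixed-point equation afterwards — but your version works equally well.
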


We will need the following remainder terms estimates.
\begin{lemma}\label{le:Restimates}
We have the following estimates 
\begin{equation}\label{eq:restestimate1}
\begin{split}
 &\abs{\div_{\frac12} (R_\eps P^T\change{(y)})[\varphi]}\\[3mm]
 &\quad \quad \aleq (1+\|\eps\|_{L^\infty(\R)}) (\|\Omega\|_{L^2(\Ep_{od}^1\R)} + [P]_{W^{1/2,2}(\R)})[P]_{W^{1/2,2}(\R)}\, \|\lapv \varphi\|_{L^{(2,\infty)}(\R)}
\end{split}
 \end{equation}
and
\begin{equation}\label{eq:restestimatedifference}
 \begin{split}
 &\abs{\div_{\frac12} \brac{(R_{\eps_1} - R_{\eps_2}) P^T\change{(y)}}[\varphi]}\\
 &\quad \quad \aleq \|\eps_1-\eps_2\|_{L^\infty(\R)} (\|\Omega\|_{L^2(\Ep_{od}^1\R)} + [P]_{W^{1/2,2}(\R)})[P]_{W^{1/2,2}(\R)}\, \|\lapv \varphi\|_{L^{(2,\infty)}(\R)}.
\end{split}
 \end{equation}
\end{lemma}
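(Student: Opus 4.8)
The plan is to estimate the distributional action of $\div_{\frac12}(R_\eps P^T(y))$ against a test function $\varphi$ by unwinding the definition of the fractional divergence and of $R_\eps$, and then recognizing each of the three summands in \eqref{eq:Restdefinition} as a bilinear expression whose structure allows a compensation (div-curl) estimate of Coifman–Lions–Meyer–Semmes type, as encoded in the $L^{(2,\infty)}$-bound appearing in \Cref{la:Wentewithestimate}. Concretely, I would first pull out the factor $(I+\eps(x))$, bounding it in $L^\infty$ by $1+\|\eps\|_{L^\infty(\R)}$; since this factor depends only on $x$ and $\varphi$ is multiplied against $d_{\frac12}\varphi(x,y)$, it can be absorbed without destroying the integrability-by-compensation structure. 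What remains is to handle the three terms
\[
d_{\frac14}P(x,y)\, d_{\frac14}P^T(x,y),\qquad P(x)\Omega(x,y)(P^T(x)-P^T(y)),\qquad (P(x)-P(y))\Omega(x,y)P^T(x),
\]
each multiplied on the right by $P^T(y)$ and tested against $d_{\frac12}\varphi(x,y)$.

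For the first term I would use that $d_{\frac14}P\, d_{\frac14}P^T$ is a genuine ``fractional Jacobian'': writing $d_{\frac12}\varphi(x,y)=|x-y|^{-\frac14}\,|x-y|^{-\frac14}(\varphi(x)-\varphi(y))$ and distributing the powers of $|x-y|$, one obtains a trilinear form in $d_{\frac14}P$, $d_{\frac14}P$, and $d_{\frac14}\varphi$ integrated against $|x-y|^{-n}\,dx\,dy$, which is exactly the object controlled by the three-term commutator / compensation estimates from \cite{MS18} (this is the nonlocal analogue of $\int \nabla P\cdot\nabla^\perp P\,\varphi$). This yields the bound $[P]_{W^{1/2,2}}^2\,\|\lapv\varphi\|_{L^{(2,\infty)}(\R)}$. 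For the second and third terms I would exploit that $P^T(x)-P^T(y)=-|x-y|^{\frac12}\,d_{\frac12}P^T(x,y)$, so each of these is, up to bounded factors $P(x),P(y),P^T(y)$, a product of $\Omega(x,y)$ and $d_{\frac12}P(x,y)$; again redistributing the $|x-y|$ weights against $d_{\frac12}\varphi$ turns this into a compensated trilinear form controlled by $\|\Omega\|_{L^2(\Ep_{od}^1\R)}\,[P]_{W^{1/2,2}(\R)}\,\|\lapv\varphi\|_{L^{(2,\infty)}(\R)}$. Collecting the three contributions gives precisely \eqref{eq:restestimate1}.

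For \eqref{eq:restestimatedifference} the key observation is that $R_{\eps_1}-R_{\eps_2}=\tfrac12(\eps_1(x)-\eps_2(x))\,\big(\,\cdots\,\big)P^T(y)$, i.e. the difference touches only the prefactor $(I+\eps(x))$, and the bracketed expression is identical for $\eps_1$ and $\eps_2$. Hence the very same estimates apply verbatim, with $1+\|\eps\|_{L^\infty}$ replaced by $\|\eps_1-\eps_2\|_{L^\infty(\R)}$, giving \eqref{eq:restestimatedifference}. I expect the main obstacle to be purely bookkeeping: carefully tracking the homogeneity in $|x-y|$ so that after absorbing the $x$-only factors $(I+\eps(x))$, $P(x)$ and the $y$-only factors $P(y)$, $P^T(y)$ into $L^\infty$, what is left is genuinely of the form (compensated product)$\times d_{\frac14}\varphi$ or an equivalent structure to which the $L^{(2,\infty)}$ compensation estimate of \cite{MS18} applies — in particular making sure that the bounded factors evaluated at a single point ($x$ or $y$) do not spoil the cancellation, which is the standard subtlety in div-curl arguments and is handled exactly as in the proof of \Cref{th:gauge} in \cite{MS18}.
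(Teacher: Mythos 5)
Your plan takes a genuinely different route from the paper, and the central idea does not survive scrutiny. You treat the three summands of $R_\eps$ as compensated (div-curl / CLMS / three-term commutator) quantities, but none of them actually has that structure: after absorbing the pointwise-bounded factors $(I+\eps(x))$, $P(x)$, $P(y)$, $P^T(y)$, what remains is $d_{\frac14}P\,d_{\frac14}P^T$ (resp.\ $\Omega\,d_{\frac12}P$) paired against $d_{\frac12}\varphi$, and there is no divergence-free factor, no alternating/Jacobian sign pattern, and no commutator in sight. The compensation estimates from \cite{MS18,DLR11} bound specific objects (products $F\cdot d_s g$ with $\div_s F=0$, or three-term commutators of a prescribed algebraic form), not arbitrary trilinear forms in $d_\alpha$'s; quoting them here is a mis-cite. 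There is also a second, independent problem: even if one invoked an $\mathcal H^1$--$BMO$ type compensation bound, it would yield $[P]^2_{W^{1/2,2}}\,\|\varphi\|_{BMO}\aleq [P]^2_{W^{1/2,2}}\,\|\lapv\varphi\|_{L^2}$, which is \emph{weaker} than the claimed $\|\lapv\varphi\|_{L^{(2,\infty)}}$-bound (since $\|\lapv\varphi\|_{L^{(2,\infty)}}\le\|\lapv\varphi\|_{L^2}$). The $L^{(2,\infty)}$-dual bound is precisely what \Cref{la:Wentewithestimate} requires of the remainder $R$, so that lemma would no longer be applicable and the proof of \Cref{pr:nlocfixedpoint} would break.

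What is actually needed (and what the paper does) is a plain ``hard-analysis'' estimate with no cancellation. One takes absolute values, reducing to the two model integrals $\mathcal I=\int\!\!\int |d_{\frac14}P|^2\,|d_{\frac12}\varphi|\,\frac{\dx\dy}{|x-y|}$ and $\mathcal{II}=\int\!\!\int |\Omega|\,|d_{\frac14}P|\,|d_{\frac14}\varphi|\,\frac{\dx\dy}{|x-y|}$, then applies the Hedberg-type pointwise inequality \eqref{eq:maximalfractional}, $|\varphi(x)-\varphi(y)|\aleq |x-y|^{\alpha}\bigl(\mathcal M(\laps{\alpha}\varphi)(x)+\mathcal M(\laps{\alpha}\varphi)(y)\bigr)$, to replace $|d_{\frac12}\varphi|$ (resp.\ $|d_{\frac14}\varphi|$) by maximal functions of $\lapv\varphi$ (resp.\ $(-\Delta)^{\frac18}\varphi$). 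After integrating in $y$ this produces an inner-product of $|\mathcal D_{\frac14,2}P|^2$ against $\mathcal M(\lapv\varphi)$, which one bounds by Lorentz-space H\"older ($L^{(2,1)}$ against $L^{(2,\infty)}$), and then uses the Lorentz-improved Sobolev embedding \Cref{th:weirdsobolev} to control $\||\mathcal D_{\frac14,2}P|\|_{L^{(4,2)}}$ by $[P]_{W^{1/2,2}}$. This is the mechanism that produces the weak norm $\|\lapv\varphi\|_{L^{(2,\infty)}}$ on the right-hand side. Your observation that $R_{\eps_1}-R_{\eps_2}=\tfrac12(\eps_1(x)-\eps_2(x))\bigl(\cdots\bigr)P(y)$ with an $\eps$-independent bracket is correct and is exactly how the paper derives \eqref{eq:restestimatedifference}, but it inherits the same gap in the core estimate.
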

\begin{proof}
We observe that for any $\varphi \in C_c^\infty(\R)$ we have 
 \begin{equation}\label{eq:splitintotwo}
 \begin{split}
  &\left |\div_{\frac{1}{2}}(R_\eps P^T\change{(y)})[\varphi] \right |\\
  &\aleq\abs{\int_\R \int_\R 
  (I+\eps(x)) \brac{d_{\frac{1}{4}} P(x,y)\, d_{\frac{1}{4}} P^T(x,y) 
} \ d_{\frac{1}{2}} \varphi(x,y) \frac{\dx \dy}{|x-y|}}\\
  &\quad +\abs{\int_\R \int_\R 
	(I+\eps(x)) \brac{P(x)\, \Omega(x,y)\, \brac{P^T(x)-P^T(y)}\ d_{\frac{1}{2}} \varphi(x,y)\frac{\dx \dy}{|x-y|}}}\\
	&\quad +\abs{\int_\R \int_\R 
		(I+\eps(x))\brac{P(x)-P(y)}\,\Omega(x,y)\, P^T(x) \ d_{\frac{1}{2}} \varphi(x,y)\frac{\dx \dy}{|x-y|}}\\
&\aleq \brac{1+\|\eps\|_{L^\infty}}\, \int_\R \int_\R \brac{|d_{\frac{1}{4}} P(x,y)|^2\, |d_{\frac{1}{2}} \varphi(x,y)|\, +|\Omega(x,y)|\, |d_{\frac{1}{4}}P(x,y)|\,
| d_{\frac{1}{4}} \varphi(x,y)|} \frac{\dx \dy}{|x-y|}\\
&=\brac{1+\|\eps\|_{L^\infty}}\, \brac{\mathcal{I} + \mathcal{II}}.
\end{split}
\end{equation}
Let $\mathcal M$ be the Hardy--Littlewood maximal function and let $\alpha\in(0,1)$. We will use the following fractional counterpart (for the proof see \cite[Proposition 6.6]{S18})
\begin{equation}\label{eq:maximalfractional}
 |f(x) - f(y)| \aleq |x-y|^\alpha \brac{ \mathcal M ((-\Delta)^{\frac{\alpha}{2}}f)(x) +   \mathcal M ((-\Delta)^{\frac{\alpha}{2}}f)(y)}
\end{equation}
of the well known inequality, see \cite{Bojarski-Hajlasz-1993, Hajlasz-1996}
\[
 |f(x) - f(y)| \aleq |x-y| \brac{\mathcal M|\nabla f|(x) + \mathcal M |\nabla f|(y)}.
\]
We begin with the estimate of the first term on the right-hand side of \eqref{eq:splitintotwo}. 

We observe that by \eqref{eq:maximalfractional} and by the symmetry of the integrals we obtain
\begin{equation}\label{eq:asdfg}
\begin{split}
\mathcal I\coloneqq \int_\R \int_\R |d_\frac14 P(x,y)|^2 |d_\frac12 \vp(x,y)|\frac{\dx \dy}{|x-y|}
\aleq \int_{\R} |\mathcal M (\lapv \vp)(x)| \int_\R |d_\frac14 P(x,y)|^2 \frac{\change{\dy \dx}}{|x-y|}.
\end{split}
\end{equation}
Applying H\"{o}lder's inequality (for Lorentz spaces) we obtain
\begin{equation}\label{eq:asdkjhfgasdj}
\begin{split}
\int_{\R} |\mathcal M (\lapv \vp)(x)| \int_\R |d_\frac14 P(x,y)|^2 \frac{\change{\dy \dx}}{|x-y|}
&\aleq \|\lapv \vp\|_{L^{(2,\infty)}} \||\mathcal D_{\frac14,2} P|^2\|_{L^{(2,1)}}\\
&= \|\lapv \vp\|_{L^{(2,\infty)}} \||\mathcal D_{\frac14,2} P|\|^2_{L^{(4,2)}},
\end{split}
\end{equation}
where we used the notation from \Cref{s:preliminaries}: for $s\in(0,1)$ and $q>1$ we write 
\[|\mathcal D_{s,q}f|(x) \coloneqq \brac{\int_\R \frac{|f(x)-f(y)|^q}{|x-y|^{1+sq}}  \dy}^\frac1q.\] 

Applying \Cref{th:weirdsobolev}, \eqref{eq:weridsobolevlorentz} for $t=\frac{1}{2}$ we get
\begin{equation}\label{eq:qwere}
\||\mathcal D_{\frac14,2} P|\|^2_{L^{(4,2)}} \aleq \| \lapv P\|^2_{L^{(2,2)}} \aleq \|\lapv P\|^2_{L^2} = [P]_{W^{1/2,2}}^2.
\end{equation}
Thus, combining \eqref{eq:asdfg}, \eqref{eq:asdkjhfgasdj}, and \eqref{eq:qwere} we obtain
\begin{equation}\label{eq:justrest1}
\mathcal I = \int_\R \int_\R|d_\frac13 P(x,y)|^2 |d_\frac13 \vp(x,y)|\frac{\dx \dy}{|x-y|} \aleq [P]_{W^{1/2,2}(\R)}^2 \|\lapv \vp\|_{L^{(2,\infty)}(\R)}.
\end{equation}
As for the second term of \eqref{eq:splitintotwo} we have
\begin{equation}\label{eq:second2}
\begin{split}
\mathcal{II}&\coloneqq \int_\R \int_\R |\Omega(x,y)||d_\frac14 P(x,y)||d_\frac14 \vp (x,y)|\frac{\dx \dy}{|x-y|}\\ 
&\aleq \|\Omega\|_{L^2(\Ep_{od}^1 \R)} \brac{\int_{\R} \int_{\R} |d_\frac14 P(x,y)|^2 |d_\frac14 \vp(x,y)|^2 \frac{\dif x \dif y}{|x-y|}}^\frac12.
\end{split}
\end{equation}
Applying once again \eqref{eq:maximalfractional} we obtain
\begin{equation}\label{eq:second3}
\begin{split}
\int_{\R} \int_{\R}& |d_\frac14 P(x,y)|^2 |d_\frac14 \vp(x,y)|^2 \frac{\dif x \dif y}{|x-y|} \\
&\aleq \int_\R \int_\R \brac{\mathcal M ((-\Delta)^{\frac18}\vp)(x) + \mathcal M ((-\Delta)^{\frac18}\vp) (y)}^2 |d_\frac14 P(x,y)|^2 \frac{\dif x \dif y}{|x-y|}\\
&\aleq \int_\R  \brac{\mathcal M ((-\Delta)^{\frac18}\vp)(x)}^2 \int_\R |d_\frac14 P(x,y)|^2 \frac{\change{\dif y \dif x }}{|x-y|}.
\end{split}
\end{equation}
Using H\"{o}lder's inequality and then Sobolev embedding we get
\begin{equation}\label{eq:second4}
\begin{split}
\int_\R  \brac{\mathcal M ((-\Delta)^{\frac18}\vp)(x)}^2 \int_\R |d_\frac14 &P(x,y)|^2 \frac{\change{ \dif y \dif x}}{|x-y|}\\
&\aleq \|(\mathcal M (-\Delta)^\frac18 \vp)^2 \|_{L^{(2,\infty)}(\R)} \| |\mathcal D_{\frac14,2} P|^2\|_{L^{(2,1)}(\R)}\\
&\aleq \|(-\Delta)^\frac18 \vp \|^2_{L^{(4,\infty)}(\R)} \| |\mathcal D_{\frac14,2} P|\|^2_{L^{(4,2)}(\R)}\\
&\aleq \|(-\Delta)^\frac14 \vp \|^2_{L^{(2,\infty)}(\R)} \| |\lapv P\|^2_{L^{(2,2)}(\R)},
\end{split}
\end{equation}
where for the estimate of the last term we used again \Cref{th:weirdsobolev}, \eqref{eq:weridsobolevlorentz}, with $t=\frac12$.

Combining \eqref{eq:second2}, \eqref{eq:second3}, and \eqref{eq:second4} we obtain
\begin{equation}\label{eq:justrest2}
\begin{split}
\mathcal{II} = \int_\R \int_\R |\Omega(x,y)||d_\frac14 P(x,y)|&|d_\frac14 \vp (x,y)|\frac{\dx \dy}{|x-y|}\\
&\aleq \|\Omega\|_{L^2(\Ep_{od}^1 \R)} \|(-\Delta)^\frac14 \vp \|_{L^{(2,\infty)}(\R)} [ P]_{W^{1/2,2}(\R)}.
\end{split}
\end{equation}
Finally, from \eqref{eq:splitintotwo}, \eqref{eq:justrest1}, and \eqref{eq:justrest2} we get
\[
\begin{split}
\Big|\div_\frac12&(R_\eps P^T\change{(y)})[\vp]\Big|\\
&\aleq  (1+ \|\eps\|_{L^\infty(\R)}) \brac{\|\Omega\|_{L^2(\Ep_{od}^1 \R)} + [P]_{W^{1/2,2}(\R)} }[P]_{W^{1/2,2}(\R)}\|\lapv \vp\|_{L^{(2,\infty)}(\R)}.
\end{split}
\]
This finishes the proof of \eqref{eq:restestimate1}.

In order to prove \eqref{eq:restestimatedifference} we observe
\[
 \abs{\div_{\frac12} \brac{(R_{\eps_1} - R_{\eps_2}) P^T\change{(y)}}[\varphi]} \aleq \|\eps_1 - \eps_2\|_{L^\infty}(\mathcal I + \mathcal{II}).
\]
Thus, in order to conclude it suffices to apply the estimates \eqref{eq:justrest1} and \eqref{eq:justrest2}.
\end{proof}

\begin{proof}[Proof of \Cref{pr:nlocfixedpoint}]
Let $X = L^\infty \cap \dot{W}^{\frac{1}{2},2}(\R)$. 

For any $\eps \in X$ we have $A=(1+\eps)P\in L^\infty\cap \dot{W}^{\frac12}(\R)$, which implies $A\Omega - d_\frac12 A \in L^2(\Ep_{od}^1 \R)$ and thus, from \Cref{la:rewrite1}, we have
\[
-\brac{(I+\eps(x,y))\, \Omega^P(x,y) P(y)}
\change{-} \brac{d_{\frac{1}{2}} \eps(x,y) P(y)}
+R_\eps(x,y)\in L^2(\Ep_{od}^1 \R). 
\]
We apply for this term the nonlocal Hodge decomposition, \Cref{la:hodge}: given $\eps\in X$ we
find $a(\eps)\in W^{\frac12,2}(\R)$ and $B(\eps)\in L^2(\Ep_{od}^1\R)$ with $\div_\frac12 B(\eps)=0$ satisfying
\begin{equation}\label{eq:hodgeeps}
\begin{split}
- &\brac{(I+\eps(x,y))\, \Omega^P(x,y) P(y)}
\change{-} \brac{d_{\frac{1}{2}} \eps(x,y) P(y)}
+R_\eps(x,y)\\
&= d_{\frac{1}{2}} a(\eps)(x,y) + B(\eps)(x,y)
\end{split}
\end{equation}
with the estimates
\begin{equation}\label{eq:Bestimate}
 \begin{split}
 &\|B(\eps)\|_{L^2(\Ep_{od}^1 \R)} + [a(\eps)]_{W^{\frac12,2}(\R)}\\
 &\quad\quad\aleq (1+\|\eps\|_{L^\infty(\R)})([P]_{W^{1/2,2}(\R)} + \|\Omega\|_{L^2(\Ep_{od}^1\R)}) + [\eps]_{W^{1/2,2}(\R)}.
\end{split}
 \end{equation}
Similarly, if for any two $\eps_1,\, \eps_2 \in X$ we consider the difference of the corresponding equations \eqref{eq:hodgeeps} we get
\begin{equation}\label{eq:Bdifferenceestimate}
\begin{split}
&\|B(\eps_1) - B(\eps_2)\|_{L^2(\Ep_{od}^1 \R)}\\
&\quad \quad\aleq \|\eps_1 - \eps_2\|_{L^\infty(\R)}([P]_{W^{1/2,2}(\R)} + \|\Omega\|_{L^2(\Ep_{od}^1\R)}) + [\eps_1 - \eps_2]_{W^{1/2,2}(\R)}.
\end{split}
\end{equation}

Now we define the mapping $T\colon X \to X$ as the solution to 
\begin{equation}\label{eq:Tdefinition}
\begin{split}
-& \div_{\frac{1}{2}}\brac{(I+\eps(x))\, \Omega^P(x,y)}
\change{-} \div_{\frac{1}{2}}\brac{d_{\frac{1}{2}} T(\eps)(x,y)}
+\div_{\frac{1}{2}}(R_\eps(x,y) P^T(y))\\
&= \div_{\frac{1}{2}} \brac{B(\eps)(x,y)P^T(y)}
\end{split}
\end{equation}
with $\lim_{|x|\to\infty}T(\eps)(x)=0$.

\change{Using \Cref{la:prodrule}} equation \eqref{eq:Tdefinition} can be rewritten as
\begin{equation}\label{eq:Tfractionallap}
\begin{split}
 \change{-}\laps{1} T(\eps) 
 &= \div_{\frac{1}{2}} \brac{B(\eps)P^T\change{(y)}} + \div_{\frac{1}{2}}\brac{(I+\eps\change{(x)})\, \Omega^P} - \div_{\frac{1}{2}}(R_\eps P^T\change{(y)})\\
 &= \change{-B(\eps)} \cdot d_\frac12 P^T +  d_\frac12(I+\eps)\cdot(\Omega^P)^\ast - \div_\frac12(R_\eps P^T\change{(y)})\change{.}
 \end{split}
 \end{equation}
We used in the second inequality \Cref{la:prodrule}.
 
We observe that on the right-hand we have fractional $div$-$curl$-terms: $\div_\frac12 \change{B(\eps)} =0$ and $\div_\frac12 (\Omega^P)^\ast =0$. Let us denote 
\[
 \Lambda_{\eps} \coloneqq (1+\|\eps\|_{L^\infty(\R)}) (\|\Omega\|_{L^2(\R)} + [P]_{W^{1/2,2}(\R)})[P]_{W^{1/2,2}(\R)}.
\]
By \Cref{le:Restimates}, \eqref{eq:restestimate1}, the rest term in \eqref{eq:Tfractionallap} satisfies 
\[
 \abs{\div_{\frac12} (R_\eps P^T\change{(y)})[\varphi]} \aleq \Lambda_\eps \|\lapv \varphi\|_{L^{(2,\infty)}(\R)}.
\]
Thus, we may apply the nonlocal Wente's lemma, i.e.,  \Cref{la:Wentewithestimate} and obtain
\begin{equation}\label{eq:Tepsestimate}
\begin{split}
&\|T(\eps)\|_{L^\infty(\R)} + [T(\eps)]_{W^{1/2,2}(\R)}\\
&\quad\aleq \|\change{B(\eps)}\|_{L^2(\Ep_{od}^1\R)}[P]_{W^{1/2,2}(\R)} + [\eps]_{W^{1/2,2}(\R)}\|(\Omega^P)^\ast\|_{L^2(\Ep_{od}^1 \R)}+ \Lambda_\eps\\
&\quad =\|B(\eps)\|_{L^2(\Ep_{od}^1\R)}[P]_{W^{1/2,2}(\R)} + [\eps]_{W^{1/2,2}(\R)}\|\Omega^P\|_{L^2(\Ep_{od}^1 \R)}+ \Lambda_\eps.
\end{split}
\end{equation}
Moreover, let $\eps_1,\, \eps_2 \in X$, then we have 
\begin{equation}\label{eq:Tfractionallapdifference}
\begin{split}
 &\change{-}\laps{1} (T(\eps_1)-T(\eps_2)) \\
 &= \div_{\frac{1}{2}} \brac{(B(\eps_1) - B(\eps_2))P^T(y)} +  \div_{\frac{1}{2}}\brac{(\eps_1-\eps_2)\change{(x)}\, \Omega^P} - \div_{\frac{1}{2}}((R_{\eps_1}-R_{\eps_2}) P^T\change{(y)})\\
 &=\change{-(B(\eps_1) - B(\eps_2))}\cdot d_\frac12 P^T +  d_{\frac{1}{2}}(\eps_1-\eps_2)\cdot (\Omega^P)^\ast - \div_{\frac{1}{2}}((R_{\eps_1}-R_{\eps_2}) P^T\change{(y)}),
\end{split}
 \end{equation}
 where in the last equality we have used again \Cref{la:prodrule}.
 
Again, we observe that 
\[
 \div_\frac12(\change{B(\eps_1) - (B(\eps_2)}) =0 \quad \text{and} \quad \div_\frac12 (\Omega^P)^\ast=0,
\]
and from \Cref{le:Restimates}, \eqref{eq:restestimatedifference}, we may estimate the reminder term in \eqref{eq:Tfractionallapdifference}
\begin{equation}\label{eq:Tdifferences}
 |\div_{\frac12} (R_{\eps_1} - R_{\eps_2}) P^T\change{(y)})[\varphi]| \aleq \Lambda_{\eps_1,\eps_2} \|\lapv \varphi\|_{L^{(2,\infty)}(\R)},
\end{equation}
where
\begin{equation}\label{eq:Lambdaeps1eps2}
 \Lambda_{\eps_1,\eps_2} \coloneqq \|\eps_1-\eps_2\|_{L^\infty(\R)} ([P]_{W^{1/2,2}(\R)} + \|\Omega\|_{L^2(\R)})[P]_{W^{1/2,2}(\R)}.
\end{equation}
Therefore, we may apply the nonlocal Wente's \Cref{la:Wentewithestimate} for equation \eqref{eq:Tfractionallapdifference} and obtain
\begin{equation}\label{eq:Tdifferenceestimate}
\begin{split}
 &\|T(\eps_1) - T(\eps_2)\|_{L^\infty(\R)} + [T(\eps_1) - T(\eps_2)]_{W^{1/2,2}(\R)}\\
 &\quad\aleq \|B(\eps_1) - B(\eps_2)\|_{L^2(\Ep_{od}^1\R)}[P]_{W^{1/2,2}(\R)} + [\eps_1 - \eps_2]_{W^{1/2,2}(\R)}\|\Omega^p\|_{L^2(\Ep_{od}^1 \R)} + \Lambda_{\eps_1,\eps_2}.
\end{split}
 \end{equation}

Combining \eqref{eq:Tdifferenceestimate} with \eqref{eq:Bdifferenceestimate} and \eqref{eq:Lambdaeps1eps2} we get
\[
 \begin{split}
 \|T(\eps_1) - T(\eps_2)\|_{L^\infty(\R)} + [&T(\eps_1) - T(\eps_2)]_{W^{1/2,2}(\R)}\\
 &\aleq \|\eps_1-\eps_2\|_{L^\infty(\R)} \brac{[P]_{W^{1/2,2}(\R\change{)}} + \|\Omega\|_{L^2(\Ep_{od}^1 \R)}}[P]_{W^{1/2,2}(\R)}\\
 &\quad + [\eps_1 - \eps_2]_{W^{1/2,2}(\R)}\brac{[P]_{W^{1/2,2}(\R\change{)}} + \|\Omega\|_{L^2(\Ep_{od}^1 \R)}}\\
 &\aleq (\|\eps_1 - \eps_2\|_{L^\infty(\R)} + [\eps_1 - \eps_2]_{W^{1/2,2}(\R)})\sigma,
 \end{split}
\]
where in the last inequality we used \eqref{eq:smallnessPandOmega}.

Thus, taking $\sigma$ small enough we obtain
\[
 \|T(\eps_1) - T(\eps_2)\|_{L^\infty(\R)} + [T(\eps_1) - T(\eps_2)]_{W^{1/2,2}(\R)} \le \lambda\brac{ \|\eps_1 - \eps_2\|_{L^\infty(\R)} + [\eps_1 - \eps_2]_{W^{1/2,2}(\R)}}, 
\]
for a $0<\lambda<1$, which implies that $T$ is a contraction. Consequently, by Banach fixed point theorem, there exists a unique $\eps \in X$, such that $T(\eps) = \eps$. That is we have a solution $T(\eps) = \eps$, which is a solution to
\[
\left\{ \begin{array}{l}
 -\brac{(I+\eps\change{(x)})\, \Omega^P P\change{(y)}}
\change{-} \brac{d_{\frac{1}{2}} \eps \,P\change{(y)}}
+R_\eps  = d_{\frac{1}{2}} a(\eps) + B(\eps)\\[3mm]
-\div_{\frac{1}{2}}\brac{(I+\eps\change{(x)})\, \Omega^P}
\change{-} \div_{\frac{1}{2}}\brac{d_{\frac{1}{2}} \eps}
+\div_{\frac{1}{2}}(R_\eps \,P^T\change{(y)}) = \div_{\frac{1}{2}} \brac{B(\eps)\,P^T\change{(y)}}.
       \end{array}
\right.
\]

Moreover, combining \eqref{eq:Tepsestimate} with \eqref{eq:Bestimate} and \eqref{eq:smallnessPandOmega} we obtain the following estimate on $\eps$
\begin{equation}\label{eq:eps-estimate1}
 \|\eps\|_{L^\infty(\R)} + [\eps]_{W^{\frac12,2}(\R)} \aleq \sigma\|\eps\|_{L^\infty(\R)} + \sigma[\eps]_{W^{\frac12,2}(\R)} +\|\Omega\|_{L^2(\Ep_{od}^1 \R)}+[P]_{W^{\frac12,2}(\R)},
\end{equation}
which gives for sufficiently small $\sigma$
\[
 \|\eps\|_{L^\infty(\R)} + [\eps]_{W^{\frac12,2}(\R)} \aleq \|\Omega\|_{L^2(\Ep_{od}^1 \R)}+[P]_{W^{\frac12,2}(\R)}.
\]
\end{proof}

\begin{proof}[Proof of \Cref{th:main}]
By \Cref{pr:nlocfixedpoint} we obtain the existence of an $\eps\in L^\infty\cap \dot{W}^{\frac12,2}(\R)$, $a\in \dot{W}^{\frac12,2}(\R)$, $B\in L^2(\Ep_{od}^1 \R)$ with $\div_\frac12 B =0$ satisfying the equations
solution $T(\eps) = \eps$, which is a solution to
\[
\left\{ \begin{array}{l}
 -\brac{(I+\eps\change{(x)})\, \Omega^P P\change{(y)}}
\change{-} \brac{d_{\frac{1}{2}} \eps \, P\change{(y)}}
+R_\eps  = d_{\frac{1}{2}} a + B\\
-\div_{\frac{1}{2}}\brac{(I+\eps\change{(x)})\, \Omega^P}
\change{-} \div_{\frac{1}{2}}\brac{d_{\frac{1}{2}} \eps}
+\div_{\frac{1}{2}}(R_\eps \,P^T\change{(y)}) = \div_{\frac{1}{2}} \brac{BP^T\change{(y)}},
       \end{array}
\right.
\]
where $P\in \dot{W}^{\frac12,2}(\R,SO(N))$ and $\Omega^P \in L^2(\Ep_{od}^1 \R)$ are taken from \Cref{th:gauge} and $[P]_{W^{1/2,2}(\R)} \aleq \|\Omega\|_{L^2(\Ep_{od}^1 \R)} \le \sigma$. 

By \Cref{le:aisvanishing} we have for sufficiently small $\sigma$
\[
 -\brac{(I+\eps\change{(x)})\, \Omega^P P\change{(y)}}
\change{-} \brac{d_{\frac{1}{2}} \eps \,P\change{(y)}}
+R_\eps  = B.
\]
Thus, defining for $\eps$ from \Cref{pr:nlocfixedpoint}, $A\coloneqq (I+\eps) P$, we have by  \Cref{la:rewrite1}
\[
 A\Omega - d_\frac12 A = B.
\]
The invertibility of $A$ follows from the invertibility of $P$ and $I+\eps$. Finally, since $A = (I+\eps)P$, we obtain from \eqref{eq:eps-estimates} and \eqref{eq:Pestimateinte} the estimates
\[
[A]_{W^{\frac12,2}(\R)} \aleq (1+\|\eps\|_{L^\infty}) [P]_{W^{\frac12,2}(\R)} + [\eps]_{W^{\frac12,2}(\R)} \aleq \|\Omega\|_{L^2(\Ep_{od}^1 \R)},
\]
and 
\[
 \|A\|_{L^\infty(\R)} \aleq 1 + \|\Omega\|_{L^2(\Ep_{od}^1 \R)}.
\]
This finishes the proof.

\end{proof}

\section{Weak convergence result --- Proof of Theorem~\ref{th:weakconvergence}}\label{s:weakconv}

\change{Using \Cref{la:prodrule} we obtain the following.}

\begin{lemma}\label{le:reformulateOmegatoA}
Assume that $\Omega\in L^2(\Ep_{od}^1 \R)$. Then $u\in \dot{W}^{\frac12,2}(\R,\R^N)\cap(L^2+L^\infty(\R))$ is a solution to 
\begin{equation}\label{eq:uomegaequation}
 \laph u^i = \Omega \cdot d_{\frac{1}{2}} u
\end{equation}
if and only if for any invertible matrix valued function $A,A^{-1}\in L^\infty\cap\dot{W}^{\frac12,2}(\R,GL(N))$,
\[
\div_{\frac{1}{2}} (A_{ik} d_{\frac{1}{2}} u^k) =\brac{A_{ij}\Omega_{j k}\, -d_{\frac{1}{2}} A_{ik} } \cdot d_{\frac{1}{2}}u^k.
  \]
\end{lemma}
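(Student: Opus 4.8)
The statement is purely algebraic: it is a direct consequence of the product rule for $\div_s$ from \Cref{la:prodrule}, applied with $s = \frac12$. I would prove it by a one-line computation in both directions. The key observation is that if $u$ solves \eqref{eq:uomegaequation}, then by definition of $\div_s d_s = (-\Delta)^s$ we have $\div_\frac12(d_\frac12 u^k) = \Omega_{k j}\cdot d_\frac12 u^j$ (after harmlessly renaming indices; the matrix $\Omega$ is the same one acting on $u$). We then apply $\div_\frac12$ to the product $A_{ik}(x)\, d_\frac12 u^k(x,y)$ — note $A_{ik}$ depends only on the first variable $x$ — and use \eqref{eq:productrulewithx} with $F(x,y) = d_\frac12 u^k(x,y)$ (a genuine nonlocal $1$-form, since $u \in \dot W^{\frac12,2}$) and the scalar function $A_{ik}$.

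\emph{Forward direction.} Starting from $\div_\frac12(A_{ik}\, d_\frac12 u^k)$, formula \eqref{eq:productrulewithx} gives
\[
\div_\frac12\brac{A_{ik}\, d_\frac12 u^k} = \div_\frac12(d_\frac12 u^k)\, A_{ik} + (d_\frac12 u^k)^\ast \cdot d_\frac12 A_{ik}.
\]
Here one must be slightly careful: \eqref{eq:productrulewithx} is stated as $\div_s(Fu(x)) = \div_s(F)u + F^\ast\cdot d_s u$, so with $u \rightsquigarrow A_{ik}$ and $F \rightsquigarrow d_\frac12 u^k$ the middle term is $(-\Delta)^\frac12 u^k\, A_{ik}$ (up to the constant $C_{n,s}$; I would absorb or track it consistently) and the last term involves $(d_\frac12 u^k)^\ast(x,y) = d_\frac12 u^k(y,x) = -\,d_\frac12 u^k(x,y)$, so $(d_\frac12 u^k)^\ast \cdot d_\frac12 A_{ik} = -\,d_\frac12 u^k \cdot d_\frac12 A_{ik} = -\,d_\frac12 A_{ik}\cdot d_\frac12 u^k$. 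Substituting the equation $(-\Delta)^\frac12 u^k = \Omega_{kj}\cdot d_\frac12 u^j$ yields
\[
\div_\frac12\brac{A_{ik}\, d_\frac12 u^k} = A_{ik}\,\Omega_{kj}\cdot d_\frac12 u^j - d_\frac12 A_{ik}\cdot d_\frac12 u^k = \brac{A_{ij}\,\Omega_{jk} - d_\frac12 A_{ik}}\cdot d_\frac12 u^k,
\]
after relabeling summation indices in the first term. This is exactly the claimed identity.

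\emph{Converse direction.} Since $A$ is invertible with $A, A^{-1} \in L^\infty \cap \dot W^{\frac12,2}$, the same computation run with $A^{-1}$ (or simply reading the above identity backwards and multiplying by $A^{-1}$ from the left) recovers \eqref{eq:uomegaequation} from the conservation-law form; alternatively, the identity above holds as an identity of distributions for \emph{every} such $A$ whenever $u$ solves \eqref{eq:uomegaequation}, and conversely if the conservation law holds for one invertible $A$ then choosing that same $A$ and expanding $\div_\frac12(A_{ik}d_\frac12 u^k)$ by \eqref{eq:productrulewithx} forces $A_{ik}(-\Delta)^\frac12 u^k = A_{ik}\Omega_{kj}\cdot d_\frac12 u^j$, and left-multiplication by $A^{-1}$ gives back \eqref{eq:uomegaequation}. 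The main (minor) obstacle is bookkeeping: making sure each term is well-defined as a distribution — which is guaranteed by $u \in \dot W^{\frac12,2}$, $\Omega \in L^2(\Ep^1_{od}\R)$, and $A, A^{-1} \in L^\infty \cap \dot W^{\frac12,2}$ via H\"older and the $\div$-$curl$ structure so that \Cref{la:prodrule} applies — and keeping the antisymmetry-induced sign and the index relabeling straight; there is no analytic difficulty.
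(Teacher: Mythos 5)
Your proof is correct and is essentially the paper's argument: the paper itself dispatches this lemma with the one-line remark "Using \Cref{la:prodrule} we obtain the following," and you have simply carried out that product-rule computation with $F=d_{\frac12}u^k$, the scalar $A_{ik}(x)$, and the antisymmetry $(d_{\frac12}u^k)^\ast=-d_{\frac12}u^k$, with the converse following by invertibility of $A$ (or even more simply by taking $A=I$).
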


In a first step we prove the ``local version'' of \Cref{th:weakconvergence}.
\begin{proposition}\label{pr:weakconv:1}
Let $\sigma > 0$ be the number from \Cref{th:main}.
Let $\{u_\ell\}_{\ell\in\N}$ be   a sequence  as in \Cref{th:weakconvergence} of solutions to
\[
 (-\Delta)^{\frac12} u_\ell = \Omega_{\ell} \cdot d_\frac12 u_\ell + f_\ell \quad \text{in } {\mathcal{D}}'(\R).
\]
Additionally let us assume that for some bounded interval $D \subset \R$ we have 
\begin{equation}\label{eq:OmegaboundD}
 \sup_{\ell} \|\Omega_\ell\|_{L^2(\Ep^1_{od} D)} < \sigma.
\end{equation}
Then
\[
 (-\Delta)^{\frac12} u = \Omega \cdot d_\frac12 u +f\quad \text{in } {\mathcal{D}}'(D).
\]
\end{proposition}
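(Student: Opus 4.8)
The plan is to localize, apply \Cref{th:main} to a cut-off potential, rewrite the localized equation as a conservation law via \Cref{co:conservation}, and pass to the limit there; the whole point of the $GL(N)$-gauge is that it dissolves the obstruction — the bilinear term $\Omega_\ell\cdot d_{\frac12}u_\ell$ is not weakly continuous — by turning the equation into one in which every remaining bilinear term factors as (strongly convergent)$\times$(weakly convergent). It suffices to prove the conclusion in $\mathcal D'(D_0)$ for each $D_0\Subset D$. Fix such a $D_0$ and $\chi\in C_c^\infty(D)$ with $0\le\chi\le1$ and $\chi\equiv1$ on a neighborhood of $\overline{D_0}$, and set $\tilde\Omega_\ell(x,y):=\chi(x)\chi(y)\,\Omega_\ell(x,y)$. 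Then $\tilde\Omega_\ell$ is antisymmetric, $\|\tilde\Omega_\ell\|_{L^2(\Ep_{od}^1\R)}\le\|\Omega_\ell\|_{L^2(\Ep_{od}^1 D)}<\sigma$, and (multiplying by the fixed bounded kernel $\chi\otimes\chi$ preserves weak convergence) $\tilde\Omega_\ell\rightharpoonup\tilde\Omega:=\chi(x)\chi(y)\,\Omega(x,y)$ weakly in $L^2(\Ep_{od}^1\R)$ with $\|\tilde\Omega\|_{L^2(\Ep_{od}^1\R)}<\sigma$. On $D_0$ I split $\Omega_\ell\cdot d_{\frac12}u_\ell=\tilde\Omega_\ell\cdot d_{\frac12}u_\ell+r_\ell$ with $r_\ell:=(\Omega_\ell-\tilde\Omega_\ell)\cdot d_{\frac12}u_\ell$; since $1-\chi$ vanishes near $\overline{D_0}$, for $x\in D_0$ the $y$-integrand of $r_\ell(x)$ is supported in $\{|x-y|\ge\delta\}$ for some $\delta>0$, so $r_\ell\rvert_{D_0}$ has a bounded, $y$-decaying kernel, and $r_\ell\in L^1(\R)$ uniformly by Cauchy--Schwarz. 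Passing to a subsequence I may also assume $u_\ell\rightharpoonup u$ in $\dot{W}^{\frac12,2}(\R)$ (hence $u_\ell\to u$ in $L^2_{\loc}$), $\Omega_\ell\rightharpoonup\Omega$, and that the $L^2+L^\infty$ pieces of $u_\ell$ are uniformly bounded.

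Next I apply \Cref{th:main} to $\tilde\Omega_\ell$, obtaining $A_\ell\in L^\infty\cap\dot{W}^{\frac12,2}(\R,GL(N))$ with $\div_{\frac12}\Omega^{A_\ell}=0$, where $\Omega^{A_\ell}:=A_\ell\tilde\Omega_\ell-d_{\frac12}A_\ell$, all uniformly bounded in $L^\infty\cap\dot{W}^{\frac12,2}$; as $A_\ell=(I+\eps_\ell)P_\ell$ with $\|\eps_\ell\|_{L^\infty}\aleq\sigma$ small, $A_\ell^{-1}$ is uniformly bounded in $L^\infty\cap\dot{W}^{\frac12,2}$ too. Using the compact embedding $W^{\frac12,2}\hookrightarrow L^p_{\loc}$ and a diagonal argument, I pass to a further subsequence with $A_\ell\to A$, $A_\ell^{-1}\to A^{-1}$ in $L^p_{\loc}(\R)$ ($p<\infty$) and a.e.; then $A\in L^\infty\cap\dot{W}^{\frac12,2}(\R,GL(N))$ is invertible, $\Omega^{A_\ell}\rightharpoonup\Omega^A:=A\tilde\Omega-d_{\frac12}A$ weakly in $L^2(\Ep_{od}^1\R)$, and $\div_{\frac12}\Omega^A=0$ (pass to the limit against test functions, using $d_{\frac12}\varphi\in L^2(\Ep_{od}^1\R)$ and $A_\ell\rightharpoonup A$ in $\dot{W}^{\frac12,2}$). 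Since $u_\ell$ solves $(-\Delta)^{\frac12}u_\ell=\tilde\Omega_\ell\cdot d_{\frac12}u_\ell+(f_\ell+r_\ell)$ in $\mathcal D'(\R)$ with $f_\ell+r_\ell\in\dot{W}^{-\frac12,2}+L^1$, \Cref{co:conservation} / \Cref{le:reformulateOmegatoA} (valid for any invertible gauge with $\div_{\frac12}\Omega^{A_\ell}=0$ and, being algebraic, for such right-hand sides) gives
\[
\div_{\frac12}\!\big(A_\ell\,d_{\frac12}u_\ell-(\Omega^{A_\ell})^{\ast}u_\ell\big)=A_\ell\,(f_\ell+r_\ell)\qquad\text{in }\mathcal D'(\R).
\]

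Now I test this against $\varphi\in C_c^\infty(D_0)$ and let $\ell\to\infty$, using the following convergences. (i) $A_\ell(x)\,d_{\frac12}\varphi(x,y)\to A(x)\,d_{\frac12}\varphi(x,y)$ and $u_\ell(x)\,d_{\frac12}\varphi(x,y)\to u(x)\,d_{\frac12}\varphi(x,y)$ strongly in $L^2(\Ep_{od}^1\R)$: this follows from $A_\ell\to A$, $u_\ell\to u$ in $L^2_{\loc}$ together with the fact that $x\mapsto\int_\R|d_{\frac12}\varphi(x,y)|^2\tfrac{dy}{|x-y|}$ belongs to $L^1\cap L^\infty(\R)$ and decays like $|x|^{-2}$, so the tails are controlled uniformly in $\ell$ (for $u_\ell$ via its $L^2+L^\infty$ splitting). (ii) $d_{\frac12}u_\ell\rightharpoonup d_{\frac12}u$ and $(\Omega^{A_\ell})^{\ast}\rightharpoonup(\Omega^A)^{\ast}$ weakly in $L^2(\Ep_{od}^1\R)$. (iii) $r_\ell\to r:=(\Omega-\tilde\Omega)\cdot d_{\frac12}u$ strongly in $L^1(D_0)$: writing $u_\ell(x)-u_\ell(y)=(u(x)-u(y))+(u_\ell-u)(x)-(u_\ell-u)(y)$, the first piece passes to the limit by $\Omega_\ell\rightharpoonup\Omega$ tested against the fixed $L^2(\Ep_{od}^1\R)$-kernel built from $u$ (finite since it is off-diagonal and decaying), while the remaining pieces vanish by Rellich's theorem together with the uniform-in-$\ell$ decay $\int_\R|u_\ell-u|^2(1+\dist(y,D_0))^{-2}\,dy\to0$. (iv) $f_\ell\to f$ strongly in $\dot{W}^{-\frac12,2}$ and $A_\ell^{T}\varphi\rightharpoonup A^{T}\varphi$ weakly in $\dot{W}^{\frac12,2}$. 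Combining these, the identity above passes to the limit as
\[
\div_{\frac12}\!\big(A\,d_{\frac12}u-(\Omega^A)^{\ast}u\big)=A\,(f+r)\qquad\text{in }\mathcal D'(D_0).
\]
Since $A$ is invertible with $A^{-1}\in L^\infty\cap\dot{W}^{\frac12,2}$, $\div_{\frac12}\Omega^A=0$ and $\Omega^A=A\tilde\Omega-d_{\frac12}A$, running \Cref{le:reformulateOmegatoA} and \Cref{la:prodrule} backwards yields $(-\Delta)^{\frac12}u=\tilde\Omega\cdot d_{\frac12}u+f+r$ in $\mathcal D'(D_0)$; as $r=(\Omega-\tilde\Omega)\cdot d_{\frac12}u$ on $D_0$, the right-hand side there equals $\Omega\cdot d_{\frac12}u+f$. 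Since $D_0\Subset D$ was arbitrary, this proves the proposition.

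The conceptual difficulty — non-weak-continuity of $\Omega_\ell\cdot d_{\frac12}u_\ell$ — is exactly what the gauge transform removes, so the real work is quantitative: the strong convergences in (i) and (iii). Both rest on the same mechanism: after the cut-off the relevant kernels are bounded away from the diagonal and decay at infinity, so Rellich handles the bulk while the $L^2+L^\infty$ bound on $u_\ell$ provides tail estimates uniform in $\ell$; turning these into precise uniform decay bounds is the main estimation to carry out. A secondary point is to ensure that the invertibility and $\dot{W}^{\frac12,2}$-bounds of $A_\ell^{-1}$ are uniform in $\ell$, which is where the structure $A_\ell=(I+\eps_\ell)P_\ell$ with small $\|\eps_\ell\|_{L^\infty}$ is used.
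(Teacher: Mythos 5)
Your proposal is correct and follows essentially the same approach as the paper: cut $\Omega_\ell$ off near the interval of small mass, apply \Cref{th:main} to produce uniformly bounded gauges $A_\ell$, rewrite the equation in conservation-law form, extract weakly/locally strongly convergent subsequences, and pass to the limit by pairing weak factors against strongly convergent ones while controlling the contribution from $\R\setminus B(R)$ via the $L^2+L^\infty$ bound on $u_\ell$ (a $\lim_{R\to\infty}\sup_\ell$ argument). The only differences are presentational: you use a smooth cut-off $\chi\otimes\chi$ where the paper uses the sharp indicator $\chi_D(x)\chi_D(y)$, and you absorb the far-field piece $(\Omega_\ell-\tilde\Omega_\ell)\cdot d_{\frac12}u_\ell$ into an $L^1$ right-hand side so you can quote the conservation-law form of \Cref{co:conservation} outright, whereas the paper keeps this bilinear term inside the weak formulation and treats its convergence as a separate step. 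Both organizations lead to the same estimates, so the two proofs are essentially the same.
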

\begin{proof}
Let us define $\Omega_{D,\ell} \coloneqq \chi_{D}(x) \chi_{D}(y) \Omega_\ell \in L^2(\Ep_{od}^1 \R)$. Then by \eqref{eq:OmegaboundD} we have
\begin{equation}\label{eq:uniformboundonOmegaellD}
 \|\Omega_{D,\ell}\|_{L^2(\Ep_{od}^1 \R)} \le \|\Omega_\ell\|_{L^2(\Ep_{od}^1 D)} < \sigma.
\end{equation}
By \Cref{th:main} for $\Omega_{D,\ell}$ there exists a gauge $A_\ell$ such that 
\begin{equation}\label{eq:divfreeonD}
\div_\frac12(\Omega_{D,\ell}^{A_\ell}) =0,
\end{equation}
where $\Omega_{D,\ell}^{A_\ell}\coloneqq A_\ell \Omega_{D,\ell} - d_\frac12 A_\ell$.

Let $D_1  \subset \subset D$ be an open set.

By assumption and \Cref{le:reformulateOmegatoA} we have for any $\psi \in C_c^\infty(D_1)$ \change{and for $\Omega_\ell^{A_\ell}=A_\ell\Omega_\ell - d_{\frac12}A_\ell$}
\[
 \int_{\R} A_\ell d_\frac12 u_\ell \cdot d_\frac12 \psi= \int_{\R} \Omega_{\ell}^{ A_\ell} \change{\cdot \,} d_{\frac{1}{2}} u_\ell\, \psi+ f_\ell[ A_\ell \psi].
\]
\change{Here with a slight abuse of notation we write for the matrix product $\brac{f[A\psi]}^i \coloneqq \sum_{k} f^k[A^{ik} \psi ]$.}

Let us denote $\Omega_{D^c,\ell} \coloneqq \Omega_\ell - \Omega_{D,\ell}$. Then we have 
\[
 \int_{\R} A_\ell d_\frac12 u_\ell \cdot d_\frac12 \psi
 = \int_{\R} \Omega_{D,\ell}^{A_\ell}\cdot d_{\frac{1}{2}} u_\ell \, \psi
 +\int_{\R} A_\ell \Omega_{D^c,\ell}\cdot d_{\frac{1}{2}} u_\ell \, \psi+ f_\ell[A_\ell \psi].
\]
By \Cref{la:prodrule} \change{and \eqref{eq:divfreeonD} we have $\div_\frac12 \brac{\brac{\Omega_{D,\ell}^{A_\ell}}^\ast}=0$}, \change{thus again by \Cref{la:prodrule} we get $\Omega_{D,\ell}^{A_\ell}\cdot d_{\frac{1}{2}} u_\ell = \div_{\frac{1}{2}}\brac{\brac{\Omega_{D,\ell}^{A_\ell}}^\ast u_\ell(x)}$. Therefore,}
\begin{equation}\label{eq:equationbeforepassingtothelimit}
 \int_{\R} A_\ell d_\frac12 u_\ell \cdot d_\frac12 \psi= \int_{\R} \brac{\Omega_{D,\ell}^{A_\ell}}^\ast\cdot u_\ell d_{\frac{1}{2}}\psi+\int_{\R} A_\ell \Omega_{D^c,\ell}\cdot d_{\frac{1}{2}} u_\ell \, \psi+ f_\ell[A_\ell \psi].
\end{equation}
We will pass with $\ell\to\infty$ in \eqref{eq:equationbeforepassingtothelimit}. Roughly speaking, the convergence of most of the terms will be a result of a combination of weak-strong convergence. We first observe that by \Cref{th:main} we have
\[
 \|A_\ell\|_{\dot{W}^{\frac12,2}(\R)} \aleq \|\Omega_{D,\ell}\|_{L^2(\Ep_{od}^1 \R)} \le \sigma \quad \text{ and } \quad \|A_\ell\|_{L^\infty(\R)} \aleq 1+\sigma.
\]
Thus, $\sup_\ell \|A_\ell\|_{\dot{W}^{\frac12,2}(\R)} <\infty$ and $\sup_\ell \|A_\ell\|_{L^\infty}(\R) < \infty$. Up to taking a subsequence  we obtain
\begin{equation}\label{eq:Aconvergence}
  A_\ell \rightharpoonup A \quad \text{ weakly in }\dot{W}^{\frac12,2}(\R,\R^N), \quad A_\ell \rightarrow A \quad \text{ locally strongly in }L^2,
\end{equation}
where we used the Rellich--Kondrachov's compact embedding theorem and $A\in L^\infty \cap \dot{W}^{\frac12,2}(\R,GL(N))$. By the pointwise a.e. convergence we have $\|A\|_{L^\infty(\R)}\aleq 1+ \sigma$.

By \eqref{eq:uniformboundonOmegaellD} we also have up to a subsequence
\[
 \Omega_{D,\ell} \rightharpoonup \Omega_D \quad \text{ weakly in }L^2(\Ep_{od}^1 \R),
\]
where $\Omega_D\in L^2(\Ep_{od}^1 \R)$. 

By assumptions of the Theorem we also have, up to a subsequence,
\[
u_\ell \rightharpoonup u \quad \text{ weakly in } \dot{W}^{\frac12,2}(\R), \quad u_\ell \to u \quad \text{ locally strongly in }L^2, 
\]
where $u\in  \dot{W}^{\frac12,2}(\R,\R^N)$.

\change{Let us choose a large $R \gg 1$, such that in particular $D_1 \subset B(R)$. We begin with the first term of \eqref{eq:equationbeforepassingtothelimit}.}

\underline{\hypertarget{Step1}{\textsc{Step 1.}}} We claim that (up to a subsequence)
\begin{equation}\label{eq:limitI}
 \lim_{\ell \to \infty} \int_{\R} A_\ell d_\frac12 u_\ell \cdot d_\frac12 \psi = \int_{\R} A d_\frac12 u \cdot d_\frac12 \psi.
\end{equation}
Indeed, we observe 
\begin{equation}\label{eq:weakstrongconvergence}
\begin{split}
 \int_{\R} A_\ell d_\frac12 u_\ell \cdot d_\frac12 \psi - \int_{\R} A d_\frac12 u \cdot d_\frac12 \psi 
 &= \int_{\R} (A_\ell - A)d_\frac12 u_\ell \cdot d_\frac12 \psi + \int_\R A (d_\frac12 u_\ell - d_\frac12 u) \cdot d_\frac12 \psi.
\end{split}
 \end{equation}
By weak convergence of $d_\frac12 u_\ell$ in $L^2(\Ep_{od}^1 \R)$ we have
\begin{equation}\label{eq:eqeqeqeqeqe}
 \lim_{\ell\to\infty} \int_\R A (d_\frac12 u_\ell - d_\frac12 u) \cdot d_\frac12 \psi =0.
\end{equation}
As for the first term on the right-hand side of \eqref{eq:weakstrongconvergence} we observe that since $\supp \psi \subset D_1\subset B(R)$,
\begin{equation}\label{eq:splitwithBR}
\begin{split}
 &\int_{\R}\int_{\R} (A_\ell(x) - A(x)) \frac{(u_\ell(x)-u_\ell(y)) (\psi(x)-\psi(y))}{|x-y|^{2}}\dx \dy\\
 &=
 \int_{B(R)}\int_{B(R)} (A_\ell(x) - A(x)) \frac{(u_\ell(x)-u_\ell(y)) (\psi(x)-\psi(y))}{|x-y|^{2}}\dx \dy\\
 &\quad+
 \int_{\R}\int_{B(R)} (A_\ell(x) - A(x)) \frac{(u_\ell(x)-u_\ell(y)) (\psi(x)-\psi(y))}{|x-y|^{2}}\dx \dy\\
 &\quad+\int_{B(R)}\int_{\R \setminus B(R)} (A_\ell(x) - A(x)) \frac{(u_\ell(x)-u_\ell(y)) (\psi(x)-\psi(y))}{|x-y|^{2}}\dx \dy.
 \end{split}
\end{equation}
By strong convergence in $L^2$ of $A_\ell$ on compact domains, we have 
 \begin{equation}\label{eq:firstguyyay}
 \begin{split}
  &\lim_{\ell \to \infty} \int_{B(R)}\int_{B(R)} (A_\ell(x) - A(x)) \frac{(u_\ell(x)-u_\ell(y)) (\psi(x)-\psi(y))}{|x-y|^{2}}\dx \dy\\
  &\quad \aleq \lim_{\ell \to \infty} \|A_\ell - A\|_{L^2(B(R))}\|\psi\|_{\lip}[u_\ell]_{W^{\frac12,2}(B(R))} =0
  \end{split}
 \end{equation}
 and (noting once again that $\supp \psi \subset D_1$)
\begin{equation}\label{eq:secondguytoo}
\begin{split}
 &\lim_{\ell \to \infty} \abs{\int_{\R\setminus B(R)}\int_{B(R)} (A_\ell(x) - A(x)) \frac{(u_\ell(x)-u_\ell(y))(\psi(x)-\psi(y))}{|x-y|^{2}}\dx \dy}\\
&\quad \aleq \lim_{\ell \to \infty}\change{\|A_\ell-A\|_{L^2(B(R))} [u_\ell]_{W^{\frac12,2}(\R)} \brac{\int_{\R\setminus B(R)} \sup_{x\in D_1} \frac{|\psi(x) - \psi(y)|^2}{|x-y|^2} \dif y}^\frac12}\\
&\quad \change{\aleq \lim_{\ell \to \infty}\|A_\ell-A\|_{L^2(B(R))} [u_\ell]_{W^{\frac12,2}(\R)} \|\psi\|_{L^\infty} \brac{\int_{\R\setminus B(R)} \frac{1}{1+|y|^2} \dif y}^\frac12} =0.
 \end{split}
\end{equation}
\change{In the last inequality we used the fact that if $x\in D_1$ and $y\in \R\setminus B(R)$ then $|x-y|\ageq 1 + |y|$.}

For the last term of \eqref{eq:splitwithBR}, we \change{similarly use} that if $y \in \supp \psi$ and $x \in \R \setminus B(R)$, then we have $|x-y| \ageq 1+|x|$ with a constant independent of $R$.
\[
\begin{split}
 &\abs{\int_{B(R)}\int_{\R \setminus B(R)} (A_\ell(x) - A(x)) \frac{(u_\ell(x)-u_\ell(y))
(\psi(x)-\psi(y))}{|x-y|^{2}}\dx \dy}\\
&\aleq \brac{\|A_\ell\|_{L^\infty} + \|A\|_{L^\infty}} \|\psi\|_{L^\infty} \int_{D_1}\int_{\R \setminus B(R)} \frac{|u_\ell(x)|+|u_\ell(y)|}{1+|x|^{2}}\dx \dy\\
&\aleq \brac{\|A_\ell\|_{L^\infty} + \|A\|_{L^\infty}} \|\psi\|_{L^\infty}\, \brac{\|u_\ell\|_{L^2(D_1)} \int_{\R \setminus B(R)} \frac{1}{1+|x|^2} \dx}\\
&\quad+ \brac{\|A_\ell\|_{L^\infty} + \|A\|_{L^\infty}} \|\psi\|_{L^\infty}\,\brac{\|u_\ell\|_{L^\infty+L^2(\R)} \brac{\int_{\R \setminus B(R)} \frac{\dx}{1+|x|^2} + \brac{\int_{\R \setminus B(R)} \frac{\dx}{(1+|x|^2)^2}}^{\frac{1}{2}}}}\\
&\aleq \brac{\|A_\ell\|_{L^\infty} + \|A\|_{L^\infty}} \|\psi\|_{L^\infty}\, \brac{\|u\|_{L^2(D_1)} + \|u_\ell\|_{L^\infty+L^2(\R)} } R^{-\frac12}.
\end{split}
\]
So we have 
\begin{equation}\label{eq:thirdguytoo}
 \lim_{R \to \infty} \sup_{\ell} \abs{\int_{B(R)}\int_{\R \setminus B(R)} (A_\ell(x) - A(x)) \frac{(u_\ell(x)-u_\ell(y))
(\psi(x)-\psi(y))}{|x-y|^{2}}\dx \dy} = 0 \change{.}
\end{equation} 
\change{By} \eqref{eq:splitwithBR}, \eqref{eq:firstguyyay},
\eqref{eq:secondguytoo}, and \eqref{eq:thirdguytoo} we obtain the convergence of the \change{first term on the right-hand side} of \eqref{eq:weakstrongconvergence}, i.e.,
\begin{equation}\label{eq:theconvergenceofthefirstguy}
 \lim_{\ell \to \infty} \int_{\R}\int_{\R} (A_\ell(x) - A(x)) \frac{(u_\ell(x)-u_\ell(y)) (\psi(x)-\psi(y))}{|x-y|^{2}}\dx \dy = 0.
\end{equation}
Thus, combining \eqref{eq:weakstrongconvergence}, \eqref{eq:eqeqeqeqeqe}, and \eqref{eq:theconvergenceofthefirstguy} we obtain the claim \eqref{eq:limitI}.

\underline{\textsc{Step 2.}} We claim that (up to a subsequence)
\begin{equation}\label{eq:secondtermdidIusethisname}
 \lim_{\ell\to\infty}\int_{\R} \brac{\Omega_{D,\ell}^{A_\ell}}^\ast\cdot u_\ell d_{\frac{1}{2}}\psi = \int_{\R} \brac{\Omega_{D}^{A}}^\ast\cdot u d_{\frac{1}{2}}\psi,
\end{equation}
where 
$\Omega_D^A \coloneqq A\Omega_D - d_\frac12 A$.

Indeed, we write
\begin{equation}\label{eq:steptwofirstsplit}
 \begin{split}
  &\int_{\R} \brac{\Omega_{D,\ell}^{A_\ell}}^\ast\cdot u_\ell d_{\frac{1}{2}}\psi -\int_{\R} \brac{\Omega_{D}^{A}}^\ast\cdot u d_{\frac{1}{2}}\psi\\
  &= \int_\R \int_\R \brac{\brac{\Omega_{D,\ell}^{A_\ell}}^\ast(x,y) u_\ell(x) - \brac{\Omega_{D}^{A}}^\ast(x,y)u(x)}\frac{\psi(x)-\psi(y)}{|x-y|^\frac12}\frac{\dif x \dif y}{|x-y|}\\
  &= \change{\int_\R \int_\R \brac{A_\ell(y)\Omega_{D,\ell}(y,x)u_\ell(x) - A(y)\Omega_D(y,x)u(x)} \frac{\psi(x)-\psi(y)}{|x-y|^\frac12}\frac{\dif x \dif y}{|x-y|}}\\
  &\quad\change{- \int_\R \int_\R \brac{d_\frac12 A_\ell(y,x)u_\ell(x) - d_\frac12 A(y,x)u(x)}\frac{\psi(x)-\psi(y)}{|x-y|^\frac12}\frac{\dif x \dif y}{|x-y|}}.
 \end{split}
\end{equation}
\change{Now, in order to obtain
\begin{equation}\label{eq:Step21}
 \lim_{\ell\to0}\int_\R \int_\R \brac{A_\ell(y)\Omega_{D,\ell}(y,x)u_\ell(x) - A(y)\Omega_D(y,x)u(x)} \frac{\psi(x)-\psi(y)}{|x-y|^\frac12}\frac{\dif x \dif y}{|x-y|}=0
\end{equation}
we split the integral in two}
\begin{equation}\label{eq:Step211}
 \begin{split}
&\change{\int_\R \int_\R \brac{A_\ell(y)\Omega_{D,\ell}(y,x)u_\ell(x) - A(y)\Omega_D(y,x)u(x)} \frac{\psi(x)-\psi(y)}{|x-y|^\frac12}\frac{\dif x \dif y}{|x-y|}}\\
&\quad \change{= \int_\R \int_\R \brac{A(y)u(x)(\Omega_{D,\ell}(y,x)-\Omega_D(y,x))} \frac{\psi(x)-\psi(y)}{|x-y|^\frac12}\frac{\dif x \dif y}{|x-y|}}\\
&\quad \quad \change{+ \int_\R \int_\R \brac{A_\ell(y)u_\ell(x) - A(y)u(x)}\Omega_{D,\ell}(y,x) \frac{\psi(x)-\psi(y)}{|x-y|^\frac12}\frac{\dif x \dif y}{|x-y|}.}
 \end{split}
\end{equation}
\change{The first term on the right-hand side of \eqref{eq:Step211} converges to zero as $\ell\to\infty$. This follows from the weak convergence of $\Omega_{D,\ell} \rightharpoonup \Omega_D$ in $L^2(\Ep^1_{od}\R)$, the fact that $\Omega_{D,\ell} - \Omega_D$ is supported on $D\times D$, and that $A(y)u(x)d_\frac12\psi(x,y)\chi_{D}(x) \chi_{D}(y) \in L^2(\Ep^1_{od}\R)$ (the easy verification of the latter is left to the reader).}

\change{As for the second term on the right-hand side of \eqref{eq:Step211} we begin with the observation that} 
\begin{equation}\label{eq:splisplitsplit}
\begin{split}
&\change{\int_\R \int_\R \brac{A_\ell(y)u_\ell(x) - A(y)u(x)}\Omega_{D,\ell}(y,x) \frac{\psi(x)-\psi(y)}{|x-y|^\frac12}\frac{\dif x \dif y}{|x-y|}}\\
&\change{= \int_\R \int_\R \brac{A_\ell(y) - A(y)}u_\ell(x)\Omega_{D,\ell}(y,x) \frac{\psi(x)-\psi(y)}{|x-y|^\frac12}\frac{\dif x \dif y}{|x-y|}}\\
&\quad\change{+ \int_\R \int_\R A(y)(u_\ell(x) - u(x))\Omega_{D,\ell}(y,x) \frac{\psi(x)-\psi(y)}{|x-y|^\frac12}\frac{\dif x \dif y}{|x-y|}.}
\end{split}
\end{equation}
\change{To estimate the first term of the right-hand side of \eqref{eq:splisplitsplit} we first note that the support of $\Omega_{D,\ell}$ is $D\times D$ and then we use H\"{o}lder's inequality}
\begin{equation}\label{eq:oneoneone}
\begin{split}
 &\change{\lim_{\ell\to \infty }\abs{\int_{\R} \int_{\R}(A_\ell(y) - A(y))u_\ell(x)\Omega_{D,\ell}(y,x) \frac{\psi(x)-\psi(y)}{|x-y|^\frac12}\frac{\dif x \dif y}{|x-y|}}}\\
 &\change{\le \lim_{\ell\to \infty}\int_{D} \int_{D}\abs{A_\ell(y) - A(y)}\abs{u_\ell(x)}\abs{\Omega_{D,\ell}(y,x)} \frac{\abs{\psi(x)-\psi(y)}}{|x-y|^\frac12}\frac{\dif x \dif y}{|x-y|}}\\
 & \change{\le \lim_{\ell\to \infty} \|A_\ell - A\|_{L^2(D)} \|u_\ell\|_{L^2(D)}\|\Omega_{D,\ell}\|_{L^2(\Ep^1_{od}\R)} \|\psi\|_{\lip}=0.}
\end{split}
 \end{equation}
\change{Now we verify the convergence of the second term of the right-hand side of \eqref{eq:splisplitsplit}. Again we use that the support of $\Omega_{D,\ell}$ is $D\times D$ and thus by the strong convergence in $L^2$ of $u_\ell$ on compact domains we have}
\begin{equation}\label{eq:hops}
 \begin{split}
 &\change{\lim_{\ell\to\infty}\abs{\int_\R \int_\R A(y)(u_\ell(x) - u(x))\Omega_{D,\ell}(y,x) \frac{\psi(x)-\psi(y)}{|x-y|^\frac12}\frac{\dif x \dif y}{|x-y|}}}\\
  &\change{\le \lim_{\ell\to\infty}\int_{D}\int_{D}\abs{A(y)(u_\ell(x) - u(x))\Omega_{D,\ell}(y,x)} \frac{\abs{\psi(x)-\psi(y)}}{|x-y|^\frac12}\frac{\dif x \dif y}{|x-y|}}\\
  &\change{\aleq \lim_{\ell\to\infty} \|A\|_{L^\infty}\|u_\ell - u\|_{L^2(D)} \|\Omega_{D,\ell}\|_{L^2(\Ep_{od}^1 \R)}\|\psi\|_{\lip}=0.} 
 \end{split}
\end{equation}

\change{We also claim that
\begin{equation}\label{eq:Step22}
\lim_{\ell\to\infty} \int_\R \int_\R \brac{d_\frac12 A_\ell(y,x)u_\ell(x) - d_\frac12 A(y,x)u(x)}\frac{\psi(x)-\psi(y)}{|x-y|^\frac12}\frac{\dif x \dif y}{|x-y|} =0.
\end{equation}}

\change{To verify this statement we divide the integral in two}
\begin{equation}\label{eq:step2secondterm}
\begin{split}
 &\change{\int_\R \int_\R \brac{d_\frac12 A_\ell(y,x)u_\ell(x) - d_\frac12 A(y,x)u(x)}\frac{\psi(x)-\psi(y)}{|x-y|^\frac12}\frac{\dif x \dif y}{|x-y|}}\\
 &\change{= \int_\R \int_\R d_\frac12 A_\ell(y,x)(u_\ell(x) - u(x))\frac{\psi(x)-\psi(y)}{|x-y|^\frac12}\frac{\dif x \dif y}{|x-y|}} \\
 &\quad \change{+ \int_\R \int_\R (d_\frac12 A_\ell(y,x) - d_\frac12A(y,x))u(x)\frac{\psi(x)-\psi(y)}{|x-y|^\frac12}\frac{\dif x \dif y}{|x-y|}.}
\end{split}
\end{equation}
\change{The second term on the right-hand side of \eqref{eq:step2secondterm} converges to zero as $\ell\to\infty$, because $d_\frac12 A\rightharpoonup d_\frac12 A$ weakly in $L^2(\Ep^1_{od} \R)$ and $u(x)d_\frac12 \psi(x,y) \in L^2(\Ep^1_{od}\R)$.}

\change{We verify the convergence of the first term on the right-hand side of \eqref{eq:step2secondterm}. First we note that by the strong convergence of $u_\ell$ in $L^2$ on compact domains we have}
\begin{equation}\label{eq:111}
\begin{split}
 &\change{\lim_{\ell\to \infty}\int_{B(R)}\int_{B(R)} d_\frac12 A_\ell(y,x)(u_\ell(x) - u(x))\frac{\psi(x)-\psi(y)}{|x-y|^\frac12}\frac{\dif x \dif y}{|x-y|}}\\
 &\change{\le \|u_\ell - u\|_{L^2(B(R))} \|\psi\|_{\lip}[A_\ell]_{W^{\frac12,2}(\R)} =0}   
\end{split}
\end{equation}
\change{and}
\begin{equation}\label{eq:222}
\begin{split}
 &\change{\lim_{\ell\to \infty}\int_{\R\setminus B(R)} \int_{B(R)}d_\frac12 A_\ell(y,x)(u_\ell(x) - u(x))\frac{\psi(x)-\psi(y)}{|x-y|^\frac12}\frac{\dif x \dif y}{|x-y|}}\\
 &\change{\aleq \lim_{\ell\to \infty}\|u_\ell - u\|_{L^2(B(R))}[A_\ell]_{W^{\frac12,2}(\R)}\|\psi\|_{L^\infty}\brac{\int_{\R\setminus B(R)} \frac{1}{1+|y|^2} \dif y}^\frac12 =0.} 
\end{split}
 \end{equation}
 \change{Finally, we have since $\supp \psi \subset D_1 \subset B(R)$}
 \begin{equation}
 \begin{split}
  &\change{\int_{B(R)} \int_{\R\setminus B(R)}d_\frac12 A_\ell(y,x)(u_\ell(x) - u(x))\frac{\psi(x)-\psi(y)}{|x-y|^\frac12}\frac{\dif x \dif y}{|x-y|}}\\
  &\change{\aleq [A_\ell]_{W^{\frac12,2}(\R)} \|\psi\|_{L^\infty}\brac{\int_{\R\setminus B(R)}\frac{|u_\ell(x) - u(x)|^2}{1+|x|^2}\dif x}^\frac12}\\
 &\change{\aleq [A_\ell]_{W^{\frac12,2}(\R)}\|\psi\|_{L^\infty}\|u_\ell - u\|_{L^2+L^\infty(\R)}\max\left\{\brac{\int_{\R\setminus B(R)} \frac{1}{1+|x|^2}\dif x}^\frac12, \brac{\frac{1}{1+R^2}}^\frac12\right\}}\\
  &\change{\aleq R^{-\frac12}[A_\ell]_{W^{\frac12,2}(\R)} \|\psi\|_{L^\infty}\|u_\ell - u\|_{L^2+L^\infty(\R)}.}
 \end{split}
 \end{equation}
\change{This gives
\begin{equation}\label{eq:333}
 \lim_{R\to \infty} \sup_\ell \abs{\int_{B(R)} \int_{\R\setminus B(R)}d_\frac12 A_\ell(y,x)(u_\ell(x) - u(x))\frac{\psi(x)-\psi(y)}{|x-y|^\frac12}\frac{\dif x \dif y}{|x-y|}} =0.
\end{equation}
Thus the convergence of the first term of \eqref{eq:step2secondterm} follows from \eqref{eq:111}, \eqref{eq:222}, and \eqref{eq:333}. We proved \eqref{eq:Step22}.}

\change{
Now \eqref{eq:secondtermdidIusethisname} follows from \eqref{eq:steptwofirstsplit} combined with \eqref{eq:Step21} and \eqref{eq:Step22}.}

\underline{\textsc{Step 3.}} We claim that
\begin{equation}\label{eq:divfreeOmegaDA}
\div_\frac12 \brac{\Omega_{D}^A}^\ast =0. 
\end{equation}
That is, we claim that for any $\vp\in C_c^\infty(\R)$ we have
\[
0=\lim_{\ell\to \infty} \int_{\R} \int_{\R} \brac{\Omega_{D,\ell}^{A_\ell}}^\ast \frac{\vp(x) - \vp(y)}{|x-y|^\frac12} \frac{\dif x \dif y}{|x-y|} =  \int_{\R} \int_{\R}  \brac{\Omega_{D}^{A}}^\ast \frac{\vp(x) - \vp(y)}{|x-y|^\frac12} \frac{\dif x \dif y}{|x-y|}.
\]
We write
\begin{equation}\label{eq:ecolabel}
 \begin{split}
  &\int_{\R} \int_{\R} \brac{\Omega_{D,\ell}^{A_\ell}}^\ast \frac{\vp(x) - \vp(y)}{|x-y|^\frac12} \frac{\dif x \dif y}{|x-y|} -  \int_{\R} \int_{\R}  \brac{\Omega_{D}^{A}}^\ast \frac{\vp(x) - \vp(y)}{|x-y|^\frac12} \frac{\dif x \dif y}{|x-y|}\\
  &= \int_\R \int_\R \brac{A(y)\Omega_D(y,x) - A_\ell(y)\Omega_{D,\ell}(y,x)}d_\frac12 \vp(x,y) \frac{\dif x \dif y}{|x-y|}\\
  &\quad+ \int_\R \int_\R \brac{d_\frac12 A(y,x) - d_\frac12 A_\ell(y,x)}d_\frac12 \vp(x,y) \frac{\dif x \dif y}{|x-y|}.
 \end{split}
\end{equation}
As for the second term of \eqref{eq:ecolabel} we observe that by weak convergence of $d_\frac12 A_\ell$ in $L^2(\Ep_{od}^1 \R)$ we have
\[
 \lim_{\ell\to\infty} \int_\R \int_\R \brac{d_\frac12 A(y,x) - d_\frac12 A_\ell(y,x)}d_\frac12 \vp(x,y) \frac{\dif x \dif y}{|x-y|} =0.
\]
As for the first term of \eqref{eq:ecolabel} we proceed exactly as in \hyperlink{Step1}{Step 1} and obtain
\[
 \lim_{\ell\to\infty} \int_\R \int_\R \brac{A(y)\Omega_D(y,x) - A_\ell(y)\Omega_{D,\ell}(y,x)}d_\frac12 \vp(x,y) \frac{\dif x \dif y}{|x-y|} =0.
\]
This finishes the proof of \eqref{eq:divfreeOmegaDA}.

\underline{\textsc{Step 4.}} We claim that (up to a subsequence)
\begin{equation}\label{eq:number3}
 \lim_{\ell\to\infty} A_\ell \Omega_{D^c,\ell} \cdot d_\frac12 u_\ell \psi = \int_\R A\Omega_{D^c} \cdot d_\frac12 u\psi, 
\end{equation}
where $\Omega_{D^c} = \Omega - \Omega_D$ and $\Omega\in L^2(\Ep_{od}^1 \R)$ is the one given in the assumptions of the theorem. 
 
Indeed, since $\Omega_{D^c,\ell}(x,y)=0$ whenever both $x,y \in D$ we have by the support of $\psi$,
\begin{equation}\label{eq:decompositionOmegaDcellandFell}
\begin{split}
 \int_{\R} A_\ell\Omega_{D^c,\ell}\cdot d_{\frac{1}{2}} u \psi 
 &= \int_{\R} \int_{\R}  (A_\ell(x))_{ij} (\Omega_{D^c,\ell})_{jk}(x,y) \frac{\brac{u_\ell^k (x)-u_\ell^k(y)}}{|x-y|^\frac{1}{2}}\psi(x)\chi_{|x-y| \geq \dist(D_1,\partial D)} \frac{\dx \dy}{|x-y|}\\
 &= \int_{\R} \int_{\R} (\Omega_{D^c,\ell})_{jk}(x,y) (A_\ell(x))_{ij} \frac{\brac{u_\ell^k (x)-u_\ell^k(y)}}{|x-y|^\frac{1}{2}}\psi(x)\chi_{|x-y| \geq \dist(D_1,\partial D)} \frac{\dx \dy}{|x-y|}.
\end{split}
 \end{equation}
\change{We set}
\[
 F_\ell(x,y) \coloneqq  \chi_{|x-y| \geq \dist(D_1,\partial D)}\frac{\brac{u_\ell (x)-u_\ell(y)} }{|x-y|^\frac{1}{2}}  A_\ell (x) \psi(x) 
\]
and 
\[
 F(x,y)\coloneqq \chi_{|x-y| \geq \dist(D_1,\partial D)}\frac{\brac{u(x)-u(y)}}{|x-y|^\frac{1}{2}}  A(x) \psi(x).
\]
We claim that we have the strong convergence
\begin{equation}\label{eq:strongconvF}
\lim_{\ell\to\infty} \|F_\ell - F\|_{L^{2}(\Ep^1_{od} \R)} = 0.
\end{equation}

Indeed, we have
\begin{equation}\label{eq:splitdubrovnik}
 \begin{split}
   &\int_{\R}\int_{\R}|F_\ell(x,y)-F(x,y)|^2 \frac{\dx \dy}{|x-y|}\\
  &\le \int_{\R}\int_{D_1}\abs{d_\frac12 u_\ell(x,y) A_\ell(x) - d_\frac12 u(x,y)A(x)}^2{|\psi(x)|^2}\chi_{|x-y| \geq \dist(D_1,\partial D)} \frac{\dif x \dif y}{|x-y|}\\
&\aleq \int_{\R}\int_{D_1} \abs{d_\frac12 u_\ell(x,y) - d_\frac12 u(x,y)}^2 \brac{|A(x)|^2+|A_\ell(x)|^2} {|\psi(x)|^2}\chi_{|x-y| \geq \dist(D_1,\partial D)} \frac{\dif x \dif y}{|x-y|}\\
  &\quad + \int_{\R}\int_{D_1} \abs{d_\frac12 u(x,y)}^2|A_\ell(x) - A(x)|^2 |\psi(x)|^2 \chi_{|x-y| \geq \dist(D_1,\partial D)} \frac{\dif x \dif y}{|x-y|}.
 \end{split}
\end{equation}

\underline{For the first term of the right-hand side of \eqref{eq:splitdubrovnik}} we take $R \gg 1$, such that in particular $\supp \psi \subset D_1 \subset \subset D \subset B(R)$ and estimate
\begin{equation}\label{eq:splitdubrovnik2}
 \begin{split}
  &\int_{\R}\int_{D_1} \abs{d_\frac12 u_\ell(x,y) - d_\frac12 u(x,y)}^2 \brac{|A(x)|^2 + |A_\ell(x)|^2} {|\psi(x)|^2}\chi_{|x-y| \geq \dist(D_1,\partial D)} \frac{\dif x \dif y}{|x-y|}\\
  &=\int_{\R\setminus B(R)}\int_{D_1} \abs{d_\frac12 u_\ell(x,y) - d_\frac12 u(x,y)}^2 \brac{|A(x)|^2 + |A_\ell(x)|^2} {|\psi(x)|^2}\chi_{|x-y| \geq \dist(D_1,\partial D)} \frac{\dif x \dif y}{|x-y|}\\
  &\quad + \int_{B(R)}\int_{D_1} \abs{d_\frac12 u_\ell(x,y) - d_\frac12 u(x,y)}^2 \brac{|A(x)|^2 + |A_\ell(x)|^2} {|\psi(x)|^2}\chi_{|x-y| \geq \dist(D_1,\partial D)} \frac{\dif x \dif y}{|x-y|}.
 \end{split}
\end{equation}

Now, for the second term of the right-hand side of \eqref{eq:splitdubrovnik2} we have
\[
\begin{split}
&\int_{B(R)}\int_{D_1} \abs{d_\frac12 u_\ell(x,y) - d_\frac12 u(x,y)}^2 \brac{|A(x)|^2 + |A_\ell(x)|^2} {|\psi(x)|^2}\chi_{|x-y| \geq \dist(D_1,\partial D)} \frac{\dif x \dif y}{|x-y|}\\
 &\aleq \brac{\|A\|_{L^\infty(D_1)}^2+\|A_\ell\|_{L^\infty(D_1)}^2}\|\psi\|^2_{L^\infty}\int_{\change{B(R)}}\int_{D_1} \frac{|u_\ell(x) - u(x)|^2 + |u_\ell(y)-u(y)|^2}{|x-y|^2} \dif x \dif y\\
 &\aleq \brac{\|A\|_{L^\infty(D_1)}^2+\|A_\ell\|_{L^\infty(D_1)}^2}\|\psi\|^2_{L^\infty}\dist^{-2}(D_1,\partial D)\bigg(\int_{\change{B(R)}}\int_{D_1} |u_\ell(x) - u(x)|^2 \dif x \dif y\\
 &\quad\quad  + \int_{\change{B(R)}}\int_{D_1}|u_\ell(y)-u(y)|^2\dif x \dif y\bigg)\\
 &\le C(D_1,D,R) \brac{\|A\|_{L^\infty(D_1)}^2+\|A_\ell\|_{L^\infty(D_1)}^2}\|u_\ell - u\|^2_{L^2(B(R))}.
 \end{split}
\]
Thus, by the strong convergence on compact sets of $u_\ell$ in $L^2$ we obtain
\begin{equation}\label{eq:eqhowtonameit}
 \lim_{\ell\to\infty} \int_{B(R)}\int_{D_1} \abs{d_\frac12 u_\ell(x,y) - d_\frac12 u(x,y)}^2 \brac{|A(x)|^2 + |A_\ell(x)|^2} {|\psi(x)|^2}\chi_{|x-y| \geq \dist(D_1,\partial D)} \frac{\dif x \dif y}{|x-y|} =0.
\end{equation}

Now we estimate the first term of the right-hand side of \eqref{eq:splitdubrovnik2}. We observe that for all large $R$, whenever $x \in \supp \psi$ and $y \not \in B(R)$, we have $|x-y| \ageq 1+|y|$. Therefore, 
\begin{equation*}\label{eq:labelbaleblale}
\begin{split}
 &\int_{\R\setminus B(R)}\int_{D_1} \abs{d_\frac12 u_\ell(x,y) - d_\frac12 u(x,y)}^2 \brac{|A(x)|^2 + |A_\ell(x)|^2} {|\psi(x)|^2}\chi_{|x-y| \geq \dist(D_1,\partial D)} \frac{\dif x \dif y}{|x-y|}\\
 &\aleq \brac{\|A\|_{L^\infty(D_1)}^2+\|A_\ell\|_{L^\infty(D_1)}^2}\|\psi\|^2_{L^\infty} \int_{\R\setminus B(R)}\int_{D_1} \frac{|u_\ell(x) - u(x)|^2 + |u_\ell(y) - u(y)|^2}{1+|y|^2} \dx \dy\\
 &\aleq \brac{\|A\|_{L^\infty(D_1)}^2+\|A_\ell\|_{L^\infty(D_1)}^2}\|\psi\|^2_{L^\infty} \|u_\ell - u\|_{L^2(D_1)}^2 \int_{\R\setminus B(R)} \frac{1}{1+|y|^2} \dif y\\
 &\quad + \brac{\|A\|_{L^\infty(D_1)}^2+\|A_\ell\|_{L^\infty(D_1)}^2}\|\psi\|^2_{L^\infty} \|u_\ell - u\|_{L^2 + L^\infty(\R)}^{\change{2}}\max\left\{\int_{\R\setminus B(R)}\frac{1}{1+|y|^2}\dif y , \frac{1}{1+R^2}\right\}\\
 &\aleq R^{\change{-1}} \brac{\|A\|_{L^\infty(D_1)}^2+\|A_\ell\|_{L^\infty(D_1)}^2}\|\psi\|^2_{L^\infty}  \|u_\ell - u\|^{\change{ 2}}_{L^2 + L^\infty(\R)}.
 \end{split}
\end{equation*}
Thus,
\begin{equation}\label{eq:splitdubrovnik22}
 \lim_{R\to\infty} \sup_{\ell} \int_{\R\setminus B(R)}\int_{D_1} \abs{d_\frac12 u_\ell(x,y) - d_\frac12 u(x,y)}^2 \brac{|A(x)|^2 + |A_\ell(x)|^2} {|\psi(x)|^2}\chi_{|x-y| \geq \dist(D_1,\partial D)} \frac{\dif x \dif y}{|x-y|} =0.
\end{equation}

Combining \eqref{eq:splitdubrovnik2} with \eqref{eq:eqhowtonameit} and \eqref{eq:splitdubrovnik22} we obtain the convergence of the first term of the right-hand side of \eqref{eq:splitdubrovnik}
\begin{equation}\label{eq:toname}
 \lim_{\ell\to\infty}\int_{\R}\int_{D_1} \abs{d_\frac12 u_\ell(x,y) - d_\frac12 u(x,y)}^2 \brac{|A(x)|^2 + \change{|A_\ell(x)|^2}} {|\psi(x)|^2}\chi_{|x-y| \geq \dist(D_1,\partial D)} \frac{\dif x \dif y}{|x-y|} =0.
\end{equation}

\underline{As for the second term of the right-hand side of \eqref{eq:splitdubrovnik}} \change{we observe that since $A_\ell \to A$ pointwise almost everywhere, we have
\[
 \lim_{\ell\to\infty}\abs{d_\frac12 u(x,y)}^2|A_\ell(x) - A(x)|^2 |\psi(x)|^2 \frac{\chi_{|x-y| \geq \dist(D_1,\partial D)} }{|x-y|} = 0 \quad \text{ pointwise a.e. in } D_1 \times \R.
\]
Moreover, we have
\[
\begin{split}
 &\abs{d_\frac12 u(x,y)}^2|A_\ell(x) - A(x)|^2 |\psi(x)|^2 \chi_{|x-y| \geq \dist(D_1,\partial D)} \frac{1}{|x-y|}\\
 &\aleq \brac{\sup_{\ell} \|A_\ell\|^{\change{2}}_{L^\infty}+ \| A\|^{\change{2}}_{L^\infty}} \abs{d_\frac12 u(x,y)}^2 |\psi(x)|^2 \chi_{|x-y| \geq \dist(D_1,\partial D)} \frac{1}{|x-y|}
 \end{split}
\]
and the right-hand side is independent of $\ell$ and integrable. Thus, by dominated convergence theorem we have
\begin{equation}\label{eq:zagrebisthecapitol}
 \lim_{\ell \to \infty} \int_{\R}\int_{D_1} \abs{d_\frac12 u(x,y)}^2|A_\ell(x) - A(x)|^2 |\psi(x)|^2 \chi_{|x-y| \geq \dist(D_1,\partial D)} \frac{\dif x \dif y}{|x-y|} = 0.
\end{equation}
}

Now, plugging \eqref{eq:zagrebisthecapitol} and \eqref{eq:toname} into \eqref{eq:splitdubrovnik} we establish \eqref{eq:strongconvF}.

Thus, \eqref{eq:strongconvF} and a combination of the weak convergence of $\Omega_{\ell,D^c}$ and the strong convergence of $F_\ell$ implies
\[
 \lim_{\ell\to\infty} \int_\R \Omega_{\ell, D^c}(x,y) F_\ell(x,y) \frac{\dx \dy}{|x-y|} = \int_\R \Omega_{D^c}(x,y) F(x,y) \frac{\dx \dy}{|x-y|}.
\]
This establishes \eqref{eq:number3}.

\underline{\textsc{Step 5.}} We claim that
\begin{equation}\label{eq:number4}
 \lim_{\ell \to \infty} f_\ell [A_\ell \psi] = f[A\psi].
\end{equation}
Indeed, this holds because $A_\ell \psi$ is uniformly bounded in $\change{\dot{W}}^{\frac12,2}$\change{, $A_\ell \psi$ converges weakly to $A\psi$ in $\dot{W}^{\frac12,2}$,} and by assumption $f_\ell \to f$ in ${{W}}^{-\frac12,2}$.

\underline{\textsc{Step 6.}} Passing to the limit. 

Passing with $\ell\to\infty$ in \eqref{eq:equationbeforepassingtothelimit}, using \eqref{eq:limitI}, \eqref{eq:secondtermdidIusethisname}, \eqref{eq:number3}, and \eqref{eq:number4}, we obtain 
\begin{equation}\label{eq:almostperfect}
 \int_{\R} A d_\frac12 u \cdot d_\frac12 \psi= \int_{\R} \brac{\Omega_{D}^{A}}^\ast\cdot u d_{\frac{1}{2}}\psi+\int_{\R} A \Omega_{D^c}\cdot d_{\frac{1}{2}} u \, \psi+ f[A \psi].
\end{equation}
By \eqref{eq:secondtermdidIusethisname} we know that $\brac{\Omega_{D}^{A}}^\ast$ is $\frac12$-divergence free and thus by \Cref{la:prodrule} we have
\[
 \int_{\R} \brac{\Omega_{D}^{A}}^\ast\cdot u d_{\frac{1}{2}}\psi = \int_{\R} \Omega_{D}^{A}\cdot d_\frac12 u\psi,
\]
which combined with \eqref{eq:almostperfect} and formulas $\Omega_D^A = A\Omega_D - d_\frac12A$ and $\Omega_{D^c}=\Omega-\Omega_D$ gives
\begin{equation}\label{eq:thisoneisperfect}
 \int_{\R} A d_\frac12 u \cdot d_\frac12 \psi= \int_{\R} \Omega^{A}\cdot d_{\frac{1}{2}}u\psi+ f[A \psi].
\end{equation}
This holds for any $\psi \in C_c^\infty(D_1)$. By density \change{we can invoke} \Cref{le:reformulateOmegatoA}, which leads to the claim.
\end{proof}

\begin{corollary}\label{co:weskconvexceptSigma}
Let $u_\ell$, $\Omega_\ell$, and $f_\ell$ be as in \Cref{th:weakconvergence}. Let $D\subset \R$. Then there exits a locally finite $\Sigma \subset D$ such that
\[
 (-\Delta)^{\frac12} u = \Omega \cdot d_\frac12 u +f\quad \text{in } D \setminus \Sigma.
\]
\end{corollary}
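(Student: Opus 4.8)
The plan is to run the classical concentration-compactness scheme, with \Cref{pr:weakconv:1} playing the role of the local $\varepsilon$-regularity/identification step, exactly as in the proof of \cite[Theorem I.5]{R07}. The set $\Sigma$ will be the finite set of points at which the $L^2$-energy of $\Omega_\ell$ concentrates above the threshold $\sigma$ of \Cref{th:main}; away from $\Sigma$, \Cref{pr:weakconv:1} applies on a small interval and identifies the limit equation. There is no loss in assuming $D$ open (otherwise replace it by its interior), and we freely pass to subsequences throughout, which is harmless because the asserted identity only involves the limits $u,\Omega,f$.

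First I would introduce the defect measure. Since $\Omega_\ell\rightharpoonup\Omega$ weakly in $L^2(\Ep_{od}^1 \R)$ the sequence is bounded; put $M:=\sup_\ell\|\Omega_\ell\|_{L^2(\Ep_{od}^1\R)}<\infty$. Because the nonlocal norm $\|\Omega_\ell\|_{L^2(\Ep_{od}^1 B)}$ carries a tail over all of $\R\times\R$, the natural object is not $|\Omega_\ell|^2$ but the finite Borel measure on $\R$
\[
 \lambda_\ell(E):=\int_E\int_\R|\Omega_\ell(x,y)|^2\,\frac{\dx\,\dy}{|x-y|}+\int_\R\int_E|\Omega_\ell(x,y)|^2\,\frac{\dx\,\dy}{|x-y|},
\]
which obeys $\lambda_\ell(\R)\le 2M^2$ and $\|\Omega_\ell\|_{L^2(\Ep_{od}^1 B)}^2\le\lambda_\ell(B)$ for every bounded $B\subset\R$. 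After extracting a subsequence (by the Banach--Alaoglu/Helly compactness for uniformly bounded Radon measures) we may assume $\lambda_\ell\rightharpoonup\lambda$ vaguely, with $\lambda$ a finite Borel measure, $\lambda(\R)\le 2M^2$. I then set
\[
 \Sigma:=\{\,x_0\in D\ :\ \lambda(\{x_0\})\ge\tfrac{\sigma^2}{2}\,\}.
\]
A finite measure of total mass at most $2M^2$ has at most $4M^2/\sigma^2$ atoms of mass $\ge\sigma^2/2$, so $\Sigma$ is finite, in particular locally finite (and closed).

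Next I would establish the equation on $D\setminus\Sigma$. Fix $x_0\in D\setminus\Sigma$; since $\lambda(\{x_0\})<\sigma^2/2$ and $\lambda$ is finite, continuity from above yields $r>0$ with $\overline{B(x_0,r)}\subset D\setminus\Sigma$ and $\lambda(\overline{B(x_0,r)})<\sigma^2/2$, where $B(x_0,r)=(x_0-r,x_0+r)$. As $\overline{B(x_0,r)}$ is compact and $\lambda_\ell\rightharpoonup\lambda$ vaguely, the portmanteau inequality gives
\[
 \limsup_{\ell\to\infty}\|\Omega_\ell\|_{L^2(\Ep_{od}^1 B(x_0,r))}^2\le\limsup_{\ell\to\infty}\lambda_\ell(\overline{B(x_0,r)})\le\lambda(\overline{B(x_0,r)})<\tfrac{\sigma^2}{2}<\sigma^2,
\]
so there is $L$ with $\sup_{\ell\ge L}\|\Omega_\ell\|_{L^2(\Ep_{od}^1 B(x_0,r))}<\sigma$. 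Applying \Cref{pr:weakconv:1} to the tail sequence $\{u_\ell\}_{\ell\ge L}$ — still a sequence as in \Cref{th:weakconvergence} with the same limits $u,\Omega,f$, since discarding finitely many terms alters neither the weak nor the strong limits — with the bounded interval $B(x_0,r)$ in place of $D$ there, I obtain $(-\Delta)^{\frac12}u=\Omega\cdot d_\frac12 u+f$ in $\mathcal D'(B(x_0,r))$. The intervals $B(x_0,r)$, $x_0\in D\setminus\Sigma$, form an open cover of $D\setminus\Sigma$, and a distribution that vanishes on each member of an open cover vanishes on their union; hence $(-\Delta)^{\frac12}u=\Omega\cdot d_\frac12 u+f$ in $\mathcal D'(D\setminus\Sigma)$, which is the claim.

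I do not expect any essential obstacle: this is the nonlocal transcription of Rivi\`ere's weak-convergence argument, once \Cref{pr:weakconv:1} is available. The only two points that need a little care are both visible above: (i) the nonlocal norm $\|\Omega_\ell\|_{L^2(\Ep_{od}^1 B)}$ involves an integral over all of $\R\times\R$, so the defect measure must be assembled from the two one-sided integrals $\int_B\int_\R$ and $\int_\R\int_B$ rather than from a naive restriction of $|\Omega_\ell|^2$ to $B$ (this is what produces the harmless factor $2$ in the bound $\#\Sigma\le 4M^2/\sigma^2$); and (ii) \Cref{pr:weakconv:1} asks for smallness of $\sup_\ell$, while the defect measure only controls $\limsup_\ell$, which is reconciled by passing to a tail of the sequence.
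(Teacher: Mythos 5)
Your proof is correct, but it takes a genuinely different route from the paper. The paper follows a Sacks--Uhlenbeck covering argument (it even cites \cite[Proposition 4.3 \& Theorem 4.4]{Sucks1}): it fixes a scale $2^{-\alpha}$, covers $D$ by balls of that radius with bounded overlap $\lambda$, counts that at most $K<\Lambda\lambda/\sigma+1$ balls can carry energy above threshold, applies \Cref{pr:weakconv:1} on each remaining ball, and then lets $\alpha\to\infty$ so that the complements of the bad balls exhaust $D\setminus\{x_1,\ldots,x_K\}$. You instead run a concentration-compactness argument: you assemble the (two-sided) energy density $\lambda_\ell(E)=\int_E\int_\R+\int_\R\int_E$, pass to a vague limit $\lambda$, set $\Sigma$ to be the large atoms of $\lambda$, and at each $x_0\notin\Sigma$ use the portmanteau inequality on a small closed ball to obtain $\limsup_\ell\|\Omega_\ell\|_{L^2(\Ep_{od}^1 B(x_0,r))}<\sigma$, then pass to a tail to convert $\limsup$ to $\sup$ before invoking \Cref{pr:weakconv:1}. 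Both produce a finite (hence locally finite) $\Sigma$. What your version buys is that it makes explicit two points the paper glosses over: the diagonal-type extraction needed to pin down the bad points as $\alpha\to\infty$ (the bad balls in $\mathcal B_\alpha$ a priori depend on $\ell$, which the paper's counting does not address directly), and the $\sup_\ell$-vs-$\limsup_\ell$ mismatch, which you resolve cleanly by discarding a finite head of the sequence. You are also careful to build the defect measure out of both one-sided integrals $\int_E\int_\R$ and $\int_\R\int_E$, which is the right quantity to dominate $\|\Omega_\ell\|_{L^2(\Ep_{od}^1 E)}^2$ as defined in the paper; the paper's covering test monitors only $\int_B\int_\R$ (and compares it to $\sigma$ rather than $\sigma^2$), a minor sloppiness your formulation avoids. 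What the paper's approach buys is that it is more elementary, requiring no weak-$\ast$ compactness of measures, and it matches the multi-scale covering machinery used elsewhere in the Sacks--Uhlenbeck literature. Either route is valid.
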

\begin{proof}
We follow in spirit the covering argument of Sacks--Uhlenbeck \cite[Proposition 4.3 \& Theorem 4.4]{Sucks1}.

By assumptions there is a number $\Lambda>0$ such that $\sup_{\ell\in\N} \|
\Omega_\ell\|_{L^2(\Ep_{od}^1\R)}<\Lambda$.

Let $\alpha\in\N$ and \change{let} $\mathcal {B}_\alpha \coloneqq \{B(x_{i,\alpha},2^{-\alpha})\colon x_{i,\alpha}\in D\}$ be a family of balls such that $D\subset \bigcup \mathcal {B}_\alpha$ and each point $x\in D$ is covered at most $\lambda$ times, and such that for a smaller radius we still have $D \subset \bigcup_{i} B(x_{i,\alpha}, 2^{-\alpha-1})$. Then
\[
 \sum_{i} \int_{B(x_{i,\alpha},2^{-\alpha})}\int_{\R} |\Omega_\ell(x,y)|^2 \frac{\dif x \dif y}{|x-y|} < \Lambda \lambda.
\]
Now, let $\sigma>0$ be the number from \Cref{th:main}, then there exists at most $\frac{\Lambda \lambda}{\sigma}$ balls in $\mathcal {B}_\alpha$ on which
\[
 \int_{B(x_{i,\alpha},2^{-\alpha})}\int_{\R} |\Omega_\ell(x,y)|^2 \frac{\dif x \dif y}{|x-y|} > \sigma.
\]
Thus, by \Cref{pr:weakconv:1}, we obtain that except for $K<\frac{\Lambda \lambda}{\sigma} +1$ balls from $\mathcal {B}_\alpha$ we have
\begin{equation}\label{eq:onballs}
 \int_\R d_\frac12 u \cdot d_\frac12 \vp_i = \int_\R \Omega \cdot d_\frac12 u \vp_i + f[\vp_i]\quad \text{ for all } \vp_i\in C_c^\infty(B(x_{i,\alpha},2^{-\alpha-1})).
\end{equation}
Let us denote those balls by $B(y_{i,\alpha}, 2^{-\alpha})$ for $i=1,\dots, K$. Then by \eqref{eq:onballs} we get
 \begin{equation}\label{eq:onpsioki}
  \int_\R d_\frac12 u \cdot d_\frac12 \psi = \int_\R \Omega \cdot d_\frac12 u \psi + f[\psi], \quad \text{ for all }\psi \in C_c^\infty(D\setminus \bigcup_{i\le K} \change{\overline{B}}(y_{i,\alpha},2^{-\alpha-1})).
 \end{equation}
Since $\bigcup_{\alpha\in\N}\brac{D\setminus \bigcup_{i=1}^K \change{\overline{B}}(y_{i,\alpha},2^{-\alpha-1})} = D\setminus \{x_1,\ldots,x_K\}$\change{, \eqref{eq:onpsioki} holds for any $\psi\in C_c^\infty(D\setminus \Sigma)$, where $\Sigma\coloneqq \{x_1,\ldots,x_K\}$. This gives the claim}.
\end{proof}

In order to conclude we will need a removability of singularities lemma, compare with \cite[Proposition 4.7]{MS20}.

\begin{lemma}\label{le:removability}
Let $u \in \dot{W}^{\frac12,2}(\R,\R^N)$, $f \in L^1(\R,\R^N)$, and $g \in W^{-\frac{1}{2},2}(\R)$.  Assume that for some locally finite set $\Sigma \subset D$ we have 
\[
 (-\Delta)^\frac12 u = f +g \quad \text{in }D \setminus \Sigma.
\]
Then
\[
 (-\Delta)^\frac12 u = f +g\quad \text{in }D.
\]
\end{lemma}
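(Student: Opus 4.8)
\emph{The plan} is to excise the exceptional points one at a time by a logarithmic cut-off, using that a point in $\R$ has vanishing $\dot W^{\frac12,2}$-capacity, and then pass to the limit by weak--strong convergence. It suffices to show that the weak form of the equation,
\[
\int_\R d_{\frac12} u \cdot d_{\frac12}\varphi \;=\; \int_\R f\,\varphi\,\dx + g[\varphi]
\]
(we suppress the universal normalising constant), holds for every $\varphi\in C_c^\infty(D)$. Fix such a $\varphi$; since $\Sigma$ is locally finite, $\Sigma\cap\supp\varphi=\{x_1,\dots,x_m\}$ is finite. For small $\eps>0$ I would set $\eta_\eps:=\prod_{j=1}^m\bigl(1-\zeta_\eps^{(j)}\bigr)$, with $\zeta_\eps^{(j)}$ the standard logarithmic cut-off at $x_j$ (equal to $1$ on $B(x_j,\eps)$, equal to $\log(1/|x-x_j|)/\log(1/\eps)$ on $B(x_j,1)\setminus B(x_j,\eps)$, and $0$ off $B(x_j,1)$). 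Then $\eta_\eps\in L^\infty\cap\dot W^{\frac12,2}(\R)$, $0\le\eta_\eps\le1$, $\eta_\eps\equiv0$ in a neighbourhood of $\{x_1,\dots,x_m\}$, $\eta_\eps\to1$ a.e., and the crucial estimate
\[
[\eta_\eps]_{W^{\frac12,2}(\R)}\;\aleq\;\Bigl(\log\tfrac1\eps\Bigr)^{-1/2}\longrightarrow 0\qquad(\eps\to0)
\]
holds. For each $\zeta_\eps^{(j)}$ this is the $1$-D incarnation of the vanishing $W^{1,2}$-capacity of a point in $\R^2$: $\zeta_\eps^{(j)}$ is the trace on $\R=\partial\R^2_+$ of the planar logarithmic cut-off $U_\eps(z)=\min\{1,(\log\tfrac1{|z-x_j|})_+/\log\tfrac1\eps\}$, which satisfies $\|\nabla U_\eps\|_{L^2(\R^2)}^2=2\pi/\log(1/\eps)$, and the trace inequality gives $[\zeta_\eps^{(j)}]_{W^{\frac12,2}(\R)}\aleq\|\nabla U_\eps\|_{L^2(\R^2_+)}$.

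With $\eta_\eps$ in hand, I would first observe that $\eta_\eps\varphi$ is Lipschitz, compactly supported in the open set $D\setminus\Sigma$, and in $W^{\frac12,2}(\R)$ (by the fractional Leibniz rule $d_{\frac12}(fg)(x,y)=d_{\frac12}f(x,y)\,g(y)+f(x)\,d_{\frac12}g(x,y)$ together with $\eta_\eps,\varphi\in L^\infty\cap\dot W^{\frac12,2}$). Mollifying, $\rho_\delta*(\eta_\eps\varphi)\in C_c^\infty(D\setminus\Sigma)$ for small $\delta$ and converges to $\eta_\eps\varphi$ both uniformly (supports in a fixed compact set) and in $W^{\frac12,2}(\R)$; applying the hypothesis to these mollifications and letting $\delta\to0$ — using $d_{\frac12}u\in L^2(\Ep_{od}^1\R)$ (Cauchy--Schwarz), $f\in L^1$, $g\in W^{-\frac12,2}$ — I get
\[
\int_\R d_{\frac12}u\cdot d_{\frac12}(\eta_\eps\varphi)\;=\;\int_\R f\,\eta_\eps\varphi\,\dx\;+\;g[\eta_\eps\varphi].
\]

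Then I would let $\eps\to0$. By the Leibniz rule, $d_{\frac12}\bigl((\eta_\eps-1)\varphi\bigr)(x,y)=\varphi(y)\,d_{\frac12}(\eta_\eps-1)(x,y)+(\eta_\eps(x)-1)\,d_{\frac12}\varphi(x,y)$; the first term has $L^2(\Ep_{od}^1\R)$-norm $\le\|\varphi\|_{L^\infty}\,[\eta_\eps]_{W^{\frac12,2}(\R)}\to0$ and the second goes to $0$ in $L^2(\Ep_{od}^1\R)$ by dominated convergence (it tends to $0$ pointwise a.e.\ and is dominated by $|d_{\frac12}\varphi|\in L^2(\Ep_{od}^1\R)$). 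Hence $d_{\frac12}(\eta_\eps\varphi)\to d_{\frac12}\varphi$ in $L^2(\Ep_{od}^1\R)$, and likewise $\eta_\eps\varphi\to\varphi$ in $W^{\frac12,2}(\R)$ (adding dominated convergence for the $L^2$ part). Therefore $\int_\R d_{\frac12}u\cdot d_{\frac12}(\eta_\eps\varphi)\to\int_\R d_{\frac12}u\cdot d_{\frac12}\varphi$ (Cauchy--Schwarz), $g[\eta_\eps\varphi]\to g[\varphi]$ (duality $W^{-\frac12,2}$--$W^{\frac12,2}$), and $\int_\R f\,\eta_\eps\varphi\,\dx\to\int_\R f\,\varphi\,\dx$ (dominated convergence, $|f\eta_\eps\varphi|\le\|\varphi\|_{L^\infty}|f|\in L^1$). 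Passing to the limit in the displayed identity yields the weak equation for $\varphi$, and as $\varphi\in C_c^\infty(D)$ was arbitrary this gives $(-\Delta)^{\frac12}u=f+g$ in $D$.

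The only step that is not an essentially routine weak--strong or dominated-convergence manipulation (of the type already carried out repeatedly in Section~\ref{s:weakconv}) is the capacity estimate $[\eta_\eps]_{W^{\frac12,2}(\R)}\to0$ for the logarithmic cut-off; I expect this to be the heart of the proof, with everything else following by linearity and the fractional Leibniz rule.
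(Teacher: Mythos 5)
Your proof is correct and follows essentially the same route as the paper: excise the singular points by a cut-off whose $\dot W^{1/2,2}$-seminorm tends to zero (the paper's \Cref{le:zerocapacityfunctions} builds such a sequence as traces of truncated $\log\log$ functions, where you use the explicit logarithmic cut-off—same capacity-zero idea), insert the truncated test function, and pass to the limit via the fractional Leibniz rule plus dominated convergence, exactly as the paper does with $\psi(1-\zeta_\ell)$. The only cosmetic differences are that you treat the general finite intersection $\Sigma\cap\supp\varphi$ via a product of cut-offs where the paper assumes $\Sigma=\{x_0\}$ ``for simplicity,'' and you need a mollification step because your cut-off is merely Lipschitz while the paper's $\zeta_\ell$ are already in $C_c^\infty$.
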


\begin{proof}
For simplicity of presentation let us assume that $\Sigma=\{x_0\}$. By definition we have for any $\vp\in C^\infty_c(D\setminus \{x_0\})$
\[
 \int_D \int_D \frac{(u(x) - u(y))(\vp(x) - \vp(y))}{|x-y|^{2}} \dif x \dif y = \int_D f(x) \vp(x) \dif x+g[\vp]. 
\]
Let $\{\zeta_\ell\}_{\ell \in \N} \subset C_c^\infty(D,[0,1])$ be the sequence from \Cref{le:zerocapacityfunctions}, i.e., such that for all $\ell\in\N$ we have
\begin{equation}\label{eq:zetaproperties}
 \zeta_\ell \equiv 1 \text{ on } B_{\rho_\ell}(x_0), \quad \zeta_\ell \equiv 0 \text{ outside } B_{R_\ell}(x_0), \quad \text{ and }\lim_{\ell\to\infty}[\zeta_\ell]_{W^{\frac12,2}(D)}=0
\end{equation}
for a $0<\rho_\ell<R_\ell\to 0$ as $\ell\to\infty$.

Now let $\psi \in C^\infty_c(D)$ and then $\psi_\ell \coloneqq \psi(1-\zeta_\ell)\in C_c^\infty(\Sigma\setminus \{x_0\})$ is an admissible test function and we have
\begin{equation}\label{eq:eqwithell}
\int_D \int_D \frac{(u(x) - u(y))(\psi(x)-\psi(y))}{|x-y|^{2}} \dif x \dif y - \mathcal{I}_\ell
= \int_D f(x) \psi(x)\dif x + g[\psi] - \mathcal{II}_\ell - \mathcal{III}_{\ell}.
  \end{equation}  
We have
\begin{equation}\label{eq:zetaellfirstterm}
\begin{split}
\mathcal{I}_\ell &\coloneqq \int_D \int_D \frac{(u(x) - u(y))(\psi(x)\zeta_\ell(x) - \psi(y)\zeta_\ell(y))}{|x-y|^{2}} \dif x \dif y\\
&= \int_D \int_D \frac{(u(x) - u(y))\psi(x)(\zeta_\ell(x) - \zeta_\ell(y))}{|x-y|^{2}} \dif x \dif y\\
&\quad + \int_D \int_D \frac{(u(x) - u(y))(\psi(x)-\psi(y))\zeta_\ell(y)}{|x-y|^{2}} \dif x \dif y\\
&\le \|\psi\|_{L^\infty(D)} [u]_{W^{\frac12,2}(D)}[\zeta_\ell]_{W^{\frac12,2}(D)}+ \int_{B_{R_\ell}}\int_{D}\frac{\change{|}u(x) - u(y)\change{||}\psi(x)-\psi(y)\change{|}}{|x-y|^{2}} \dif x \dif y\change{.} 
\end{split}
\end{equation}
Thus, by \eqref{eq:zetaproperties} and by the absolute continuity of the integral we have $\lim_{\ell\to\infty} \mathcal{I}_\ell=0$.

Secondly,
\begin{equation}\label{eq:zetaellsecondterm}
\mathcal{II}_\ell \coloneqq \int_D f(x) \psi(x)\zeta_\ell(x) \dif x\le \|\psi\|_{L^\infty}\int_{B_{R_\ell}} |f(x)| \dif x \xrightarrow{\ell\to\infty} 0, 
\end{equation}
by the absolute continuity of the integral.

Thus, passing with $\ell \to \infty$ in \eqref{eq:eqwithell} we get for any $\psi\in C_c^\infty(D)$
\[
 \int_D \int_D \frac{(u(x) - u(y))(\psi(x)-\psi(y))}{|x-y|^{2}} \dif x \dif y
= \int_D f(x) \psi(x)\dif x.
\]
Lastly,
\[
 \mathcal{III}_\ell \coloneqq g[\psi\,\zeta_\ell] \xrightarrow{\ell \to \infty} 0,
\]
because, by \eqref{eq:zetaproperties}, we have $[\psi\,\zeta_\ell]_{W^{\frac12,2}} \xrightarrow{\ell \to \infty} 0$.

This finishes the proof.

\end{proof}

\begin{proof}[Proof of \Cref{th:weakconvergence}]
Combining \Cref{co:weskconvexceptSigma} and \Cref{le:removability} we obtain the claim.
\end{proof}

\appendix

\section{Nonlocal Hodge decomposition}
\begin{lemma}\label{la:hodge}
Let $p>1$, $s\in(0,1)$, $G \in L^p(\Ep_{od}^1 \R^n)$ then there exists a \change{decomposition}\footnote{\change{The decomposition is unique if we normalize $a$}}
\[
 G = d_{s} a + B,
\]
where $a \in \dot{W}^{s,p}(\R^n)$ and $B \in L^p(\Ep_{od} ^1\R^n)$ with $\div_{s} B = 0$. Moreover,
\begin{equation}\label{eq:hodgeinequality}
 \|B\|_{L^p(\Ep_{od}^1 \R^n)} + [a]_{W^{s,p}(\R^n)} \aleq \|G\|_{L^p(\Ep_{od}^1\R^n)}.
\end{equation}
\end{lemma}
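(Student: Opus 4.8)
The plan is to obtain the decomposition by solving a single scalar equation: take $a$ to be $c_{n,s}^{-1}(-\Delta)^{-s}\div_s G$, where $c_{n,s}>0$ is the constant in $\div_s d_s=c_{n,s}(-\Delta)^s$, and set $B\coloneqq G-d_s a$, so that both the divergence--free condition and the estimate \eqref{eq:hodgeinequality} reduce to standard mapping properties of $\div_s$, $d_s$ and the fractional Laplacian.

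First I would check that $\div_s G$ defines an element of a negative--order space. For $\varphi\in C_c^\infty(\R^n)$ one has, directly from the definitions, $\abs{\div_s G[\varphi]}=\abs{\int_{\R^n}\int_{\R^n} G(x,y)\,d_s\varphi(x,y)\,\frac{\dif x\,\dif y}{|x-y|^n}}\le\|G\|_{L^p(\Ep_{od}^1\R^n)}\,[\varphi]_{W^{s,p'}(\R^n)}$, where $p'$ is the H\"{o}lder conjugate of $p$. Hence $\div_s G$ extends to a bounded functional on the completion of $C_c^\infty(\R^n)$ under $[\,\cdot\,]_{W^{s,p'}(\R^n)}$, i.e.\ $\div_s G\in\dot W^{-s,p}(\R^n)\coloneqq(\dot W^{s,p'}(\R^n))^\ast$ with $\|\div_s G\|_{\dot W^{-s,p}(\R^n)}\le\|G\|_{L^p(\Ep_{od}^1\R^n)}$.

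Next I would use that $(-\Delta)^s\colon\dot W^{s,p}(\R^n)\to\dot W^{-s,p}(\R^n)$ is an isomorphism for every $1<p<\infty$; for $p=2$ this is the Riesz representation theorem on the Hilbert space $\dot W^{s,2}$, and for $p\neq2$ it follows upon identifying the Gagliardo seminorm $[\,\cdot\,]_{W^{s,p}}$ with the homogeneous Besov norm $\|\cdot\|_{\dot B^{s}_{p,p}}$, using $(\dot B^{s}_{p',p'})^\ast=\dot B^{-s}_{p,p}$ and the lifting property of $(-\Delta)^{-s}$ on the Besov scale. Accordingly I set $a\coloneqq c_{n,s}^{-1}(-\Delta)^{-s}\div_s G\in\dot W^{s,p}(\R^n)$, normalised modulo constants, and $B\coloneqq G-d_s a\in L^p(\Ep_{od}^1\R^n)$. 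By construction $[a]_{W^{s,p}(\R^n)}=\|d_s a\|_{L^p(\Ep_{od}^1\R^n)}\aleq\|\div_s G\|_{\dot W^{-s,p}(\R^n)}\aleq\|G\|_{L^p(\Ep_{od}^1\R^n)}$, hence also $\|B\|_{L^p(\Ep_{od}^1\R^n)}\le\|G\|_{L^p(\Ep_{od}^1\R^n)}+[a]_{W^{s,p}(\R^n)}\aleq\|G\|_{L^p(\Ep_{od}^1\R^n)}$, which gives \eqref{eq:hodgeinequality}. That $\div_s B=0$ follows from $\div_s d_s=c_{n,s}(-\Delta)^s$ and the choice of $a$: for every $\varphi\in C_c^\infty(\R^n)$, $\div_s(d_s a)[\varphi]=c_{n,s}(-\Delta)^s a[\varphi]=\div_s G[\varphi]$, so $\div_s B[\varphi]=\div_s G[\varphi]-\div_s(d_s a)[\varphi]=0$. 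For the uniqueness statement of the footnote: if $d_s a_1+B_1=d_s a_2+B_2$ with $\div_s B_i=0$, then $(-\Delta)^s(a_1-a_2)=0$ while $[a_1-a_2]_{W^{s,p}(\R^n)}<\infty$, so $a_1-a_2$ is constant by the Liouville-type theorem \cite[Theorem 1.1]{Fall-2016}, whence $a_1=a_2$ after normalisation and $B_1=B_2$.

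The step I expect to be the main obstacle is the isomorphism $(-\Delta)^s\colon\dot W^{s,p}\to\dot W^{-s,p}$ for $p\neq2$: one must be careful that the space defined by the Gagliardo seminorm coincides, with comparable norms, with $\dot B^{s}_{p,p}$, that the duality $\dot W^{-s,p}=(\dot W^{s,p'})^\ast=\dot B^{-s}_{p,p}$ holds at the level of tempered distributions modulo polynomials, and that $(-\Delta)^{-s}$ is a bijective lifting between these homogeneous spaces; all of this is classical but is where the actual work sits. A reader who prefers to remain inside $L^p(\Ep_{od}^1\R^n)$ may instead verify that the projector $\mathbb P\coloneqq c_{n,s}^{-1}\,d_s(-\Delta)^{-s}\div_s$ --- which for $p=2$ is the orthogonal projection onto $\overline{\operatorname{ran} d_s}$ --- is, after the change of variables $(x,y)\mapsto(x,x-y)$ identifying $L^p(\Ep_{od}^1\R^n)$ with $L^p(\R^n;L^p(\R^n,\frac{\dif h}{|h|^n}))$, a translation-invariant operator with a Calder\'{o}n--Zygmund kernel, and then deduce the $L^p$ bound for $1<p<\infty$ from its $L^2$ bound together with the standard kernel estimates.
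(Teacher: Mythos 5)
Your proof is correct and follows essentially the same route as the paper's: bound $\div_s G$ in $(\dot W^{s,p'})^\ast$, invert $(-\Delta)^s$ to obtain $a$ with the Gagliardo estimate (the paper phrases the isomorphism via the Triebel--Lizorkin scale $\dot F^{s}_{p,p}=\dot W^{s,p}$ and cites Runst--Sickel, which is the same as your Besov identification $\dot B^{s}_{p,p}$ since $q=p$), set $B=G-d_s a$, and get uniqueness from Fall's Liouville theorem. The only cosmetic differences are that you carry the constant $c_{n,s}$ explicitly and sketch, without pursuing it, an alternative Calder\'{o}n--Zygmund argument for the $L^p$ bound on the projector.
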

\begin{proof}
Since $G\in L^p(\Ep_{od}^1 \R^n)$ we have $\div_s G \in \brac{W^{s,p'}(\R^n)}^\ast$, namely
\[
 \div_s G[\varphi] \aleq \|G\|_{L^p(\Ep_{od}^1\R^n)}\, [\varphi]_{W^{s,p'}(\R^n)}.
\]
\change{Recall that for $0<s<1$ and $1\le p <\infty$ we have $\dot{W}^{s,p}(\R^n) = \dot{F}^s_{p,p}(\R^n)$ \cite[2.3.5]{1983functionspaces}. Moreover, } $\div_s G \in \dot{F}^{-s}_{p,p}$, since $(-\Delta)^{-s}: \dot{F}^{\change{s}}_{p,p}(\R^n) \to \dot{F}^{\change{-s}}_{p,p}(\R^n)$ is an isomorphism \cite[\textsection 2.6.2, Proposition 2, p.95]{RS96}. In particular, there is a unique unique solution $a \in \dot{F}^s_{p,p}(\R^n)$ to the distributional equation
\[
 (-\Delta)^s a = \div_{s} G.
\]
with 
\[
 [a]_{\dot{F}^s_{p,p}(\R^n)} \aleq [\div_s G]_{F^{-s}_{p',p'}(\R^n)} \aleq \|G\|_{L^p(\Ep_{od}^1 \R^n)}.
\]
We have found $a \in \dot{F}^{s}_{p,p}(\R^n) = \dot{W}^{s,p}(\R^n)$, and we have 
\[
 \int_{\R^n} d_s a \cdot d_s \vp = \int_{\R^n} F \vp \quad \forall \vp \in C_c^\infty(\R^n).
\]
The uniqueness of $a$ up to a normalization assumption would follow by considering a difference of two solutions and an application of nonlocal Liouville theorem \cite[Theorem 1.1]{Fall-2016}. 

Now define $B\coloneqq G - d_s a$. We have
\[
 \div_s B = \div_s G - \div_s (d_s a) = \div_s G - (-\Delta)^s a =0,
\]
which finishes the proof.
\end{proof}
\section{Localization}

The next Proposition follows from a relatively straight-forward localization results, see, e.g., \cite{MSY20}.
\begin{proposition}\label{pr:localization}
Assume $D_1 \subset \subset D_2 \subset \subset D' \subseteq D \subseteq \R$ open intervals and let $u \in L^1(\R,\R^N) + L^\infty(\R,\R^N)\cap \dot{W}^{\change{\frac 12},2}(D,\R^N)$ be a solution to
\[
 \laps{1}_{D} u = \Omega \cdot_{D} d_{\frac{1}{2}} u + f \quad \text{in } D'.
\]
That is, assume 
\begin{equation}\label{eq:eqonD}
\begin{split}
 \int_{D} \int_{D} &\frac{(u(x)-u(y)) (\varphi(x)-\varphi(y))}{|x-y|^{2}}{\dx \dy}\\
 &\quad = \int_{D} \int_{D} \Omega(x,y) d_{\frac{1}{2}} u(x,y) \varphi(x)\, \frac{\dx \dy}{|x-y|} + \int_{D} f \varphi, \quad \forall \varphi \in C_c^\infty(D').
 \end{split}
\end{equation}

Let $\eta \in C_c^\infty(D_1)$ and set $v \coloneqq \eta u$ and $\tilde{\Omega}_{ij}(x,y) = \chi_{D_2}(x)\chi_{D_2}(y)\Omega_{ij}(x,y)$. Then
\[
 \laps{1} v = \tilde{\Omega} \cdot d_{\frac{1}{2}} v + \eta f + \mathcal{G}(u,\cdot) \quad \text{in }\R,
\]
where $\mathcal{G}$ is a bilinear form with the following estimates for any $s \in (0,\frac{1}{2})$ and $\eps > 0$
\[
\begin{split}
 |\mathcal{G}(u,\change{\vp})| \leq C(\eta,s,\eps,D_1,D_2)&\, \brac{1+\|\Omega\|_{L^2(\Ep^1_{od} D)}}\\
 \quad &\cdot \brac{\|u\|_{L^2(D)+L^\infty(D)} + [u]_{W^{s,2}(D_2)}}\\
 \quad &\cdot \brac{\|\varphi\|_{L^2(D)+L^\infty(D)} + \|\varphi\|_{L^{\frac{1}{s}}(D_2)} + \|\varphi\|_{L^1 + L^\infty(\R)} + [\varphi]_{W^{\eps,\frac{2}{2s+1}}(D_2)}}.
 \end{split}
\]
In particular we have
\[
 \|\tilde{\Omega}\|_{L^2(\Ep^1_{od} \R)} \leq \|\Omega\|_{L^2(\Ep^1_{od} D_2)}.
\]
\end{proposition}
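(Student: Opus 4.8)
The plan is to transfer the cutoff $\eta$ from $v=\eta u$ onto the test function by the Leibniz rule for $d_{\frac12}$, invoke the localized equation \eqref{eq:eqonD} with the admissible test function $\eta\varphi$ (admissible since $\supp(\eta\varphi)\subset D_1\subset\subset D'$), re-express the resulting $\Omega$-term through $v$ and $\tilde\Omega$, and absorb all remaining pieces into $\mathcal{G}$. The only serious work is then the quantitative control of those pieces.

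First I would insert, for $\varphi\in C_c^\infty(\R)$, the elementary identities
\[
 \eta(x)u(x)-\eta(y)u(y) = \tfrac12(\eta(x)+\eta(y))(u(x)-u(y)) + \tfrac12(\eta(x)-\eta(y))(u(x)+u(y))
\]
and the same one for $\eta\varphi$ into the Gagliardo bilinear form of $\laps{1}v[\varphi]$; a short computation yields
\[
 \laps{1}v[\varphi] = \laps{1}u[\eta\varphi] + \mathcal{G}_a(u,\varphi) + \mathcal{G}_b(u,\varphi),
\]
where
\[
 \mathcal{G}_a(u,\varphi) = \int_\R\int_\R \frac{u(x)+u(y)}{2}\,\frac{(\eta(x)-\eta(y))(\varphi(x)-\varphi(y))}{|x-y|^{2}}\,\dif x\,\dif y
\]
\[
 \mathcal{G}_b(u,\varphi) = -\int_\R\int_\R \frac{\varphi(x)+\varphi(y)}{2}\,\frac{(\eta(x)-\eta(y))(u(x)-u(y))}{|x-y|^{2}}\,\dif x\,\dif y.
\]
Because $\supp(\eta\varphi)\subset D_1\subset\subset D$, the functional $\laps{1}u[\eta\varphi]$ coincides with the localized form $\laps{1}_D u[\eta\varphi]$ up to tail integrals over $D_1\times(\R\setminus D)$ and its reflection, on which $|x-y|\ge\dist(D_1,\partial D)$; I move those into $\mathcal{G}$. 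Then \eqref{eq:eqonD} gives $\laps{1}_D u[\eta\varphi] = \int_D\int_D \Omega\,d_{\frac12}u\,(\eta\varphi) + f[\eta\varphi]$, and $f[\eta\varphi]=(\eta f)[\varphi]$ is exactly the $\eta f$ term of the claim.

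Next, using $\eta(x)\,d_{\frac12}u(x,y) = d_{\frac12}v(x,y) - u(y)\,d_{\frac12}\eta(x,y)$ together with $\supp\eta\subset D_1\subset\subset D_2$, I would rewrite
\[
 \int_D\int_D \Omega\,d_{\frac12}u\,(\eta\varphi) = \tilde\Omega\cdot d_{\frac12}v[\varphi] - \int_D\int_D \Omega(x,y)\,u(y)\,d_{\frac12}\eta(x,y)\,\varphi(x)\,\frac{\dif x\,\dif y}{|x-y|} + (\text{tails}),
\]
the ``(tails)'' being the difference between $\int_{D_2}\int_{D_2}$ and $\int_D\int_D$, again supported where $|x-y|\ge\dist(D_1,\partial D_2)$ because $v$ is supported in $D_1$. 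Declaring $\mathcal{G}(u,\varphi)$ to be the sum of $\mathcal{G}_a$, $\mathcal{G}_b$, the two families of tail terms, and the $\Omega$--$d_{\frac12}\eta$ term, we obtain $\laps{1}v = \tilde\Omega\cdot d_{\frac12}v + \eta f + \mathcal{G}(u,\cdot)$ in $\R$. Bilinearity in $(u,\varphi)$ is manifest from the construction, and $\|\tilde\Omega\|_{L^2(\Ep_{od}^1\R)}\le\|\Omega\|_{L^2(\Ep_{od}^1 D_2)}$ is immediate since the right-hand norm already integrates over $(D_2\times\R)\cup(\R\times D_2)\supset D_2\times D_2 = \supp\tilde\Omega$.

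It remains to prove the estimate for $\mathcal{G}$, which carries the bulk of the computation. The tail terms and the $\Omega$--$d_{\frac12}\eta$ term are routine: one splits $u = u_1+u_2$ and $\varphi=\varphi_1+\varphi_2$ into $L^2$ and $L^\infty$ parts and uses $|x-y|\gtrsim 1+|x|$ (resp. $1+|y|$) on the respective tail regions, exactly as in the proof of \Cref{pr:weakconv:1}; for the $\Omega$--$d_{\frac12}\eta$ term one additionally uses Cauchy--Schwarz in $(x,y)$ and $\||\mathcal{D}_{\frac12,2}\eta|\|_{L^\infty(\R)}<\infty$ for $\eta\in C_c^\infty$, which produces a bound $\|\Omega\|_{L^2(\Ep_{od}^1 D_2)}\|u\|_{L^2(D_2)}\|\varphi\|_{L^\infty}$. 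The hard part will be the two commutator terms $\mathcal{G}_a,\mathcal{G}_b$: they must \emph{not} be controlled by the full seminorm $[u]_{W^{\frac12,2}}$ (it is not available globally here, and it would not localize to $D_2$), but rather by factoring the kernel as $d_{1-s}\eta(x,y)\,d_s(\cdot)(x,y)\,|x-y|^{-1}$ with $s\in(0,\tfrac12)$, exploiting that $d_{1-s}\eta$ is bounded and decays polynomially away from $\supp\eta$ (since $\eta$ is smooth and compactly supported), and then applying H\"older's inequality in the $\frac{\dif x\,\dif y}{|x-y|}$-measure after a near-diagonal/far-field split. This is precisely what forces the appearance of $[u]_{W^{s,2}(D_2)}$ and $\|u\|_{L^2+L^\infty(D)}$ on the $u$-side, and of $\|\varphi\|_{L^{1/s}(D_2)}$, $[\varphi]_{W^{\eps,2/(2s+1)}(D_2)}$, $\|\varphi\|_{L^1+L^\infty(\R)}$, $\|\varphi\|_{L^2+L^\infty(D)}$ on the $\varphi$-side in the statement; everything else is straightforward bookkeeping, and the analogous local computation can be found in \cite{MSY20}.
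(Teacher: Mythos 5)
Your decomposition is essentially the paper's, just rearranged. The symmetrized identity you use, $\eta(x)u(x)-\eta(y)u(y)=\tfrac12(\eta(x)+\eta(y))(u(x)-u(y))+\tfrac12(\eta(x)-\eta(y))(u(x)+u(y))$, produces $\mathcal G_a+\mathcal G_b$, which one checks equals the paper's single term $\mathcal G_1=\int\int\frac{(\eta(x)-\eta(y))(u(y)\varphi(x)-u(x)\varphi(y))}{|x-y|^2}$ (the paper instead uses the asymmetric identity $(\eta u)(x)-(\eta u)(y)=(u(x)-u(y))\eta(x)+u(y)(\eta(x)-\eta(y))$ paired with the analogous one for $\eta\varphi$). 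The bookkeeping of tails also differs cosmetically: you keep $\mathcal G_a,\mathcal G_b$ over all of $\R\times\R$ and collect separately the tails of the bilinear form applied to $(u,\eta\varphi)$, while the paper keeps $\mathcal G_1$ over $D\times D$ and collects the $v$-form tails in $\mathcal G_2$; the totals coincide. The Leibniz-rule step giving $\mathcal G_3$ (tails of the $\Omega$-term between $D\times D$ and $D_2\times D_2$) and $\mathcal G_4$ (the $\Omega\,u\,d_{\frac12}\eta$ term) is identical.

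One place where your outline needs care is the phrasing ``exploiting that $d_{1-s}\eta$ is bounded'' for $\mathcal G_a,\mathcal G_b$. If you literally pull $\|d_{1-s}\eta\|_{L^\infty}$ out and then try H\"older on the remaining $|\varphi(x)|\,|d_s u(x,y)|\,\frac{\dx\dy}{|x-y|}$, the $\varphi(x)$-factor is not in $L^2(\frac{\dx\dy}{|x-y|})$ and the pairing does not close (the $y$-integral of $\frac{1}{|x-y|}$ diverges). What works — and is exactly what the paper does in a slightly different packaging — is to keep $d_{1-s}\eta$ inside the H\"older: $|\varphi(x)|\,|d_{1-s}\eta(x,y)|$ \emph{is} in $L^2(\frac{\dx\dy}{|x-y|})$ with norm $\aleq C(\eta)\,\|\varphi\|_{L^2(D_2)}$, because $\int\frac{|\eta(x)-\eta(y)|^2}{|x-y|^{3-2s}}\dy$ is bounded uniformly in $x$ thanks to the \emph{Lipschitz} bound near the diagonal, not just the H\"older-$(1-s)$ bound. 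The paper makes this explicit by first writing $\frac{|\eta(x)-\eta(y)|}{|x-y|^2}\le\|\eta\|_{\lip}\frac{1}{|x-y|}$ and then splitting $\frac{1}{|x-y|}=\frac{1}{|x-y|^{\frac12+s}}\cdot\frac{1}{|x-y|^{\frac12-s}}$. With that correction the two arguments are the same estimate written differently, and the remaining terms ($\mathcal G_2,\mathcal G_3,\mathcal G_4$ in the paper's notation) are indeed handled by the tail bound $|x-y|\gtrsim 1+|y|$ and Cauchy--Schwarz exactly as you indicate.
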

\begin{proof}
Let $\vp\in C_c^\infty(\R)$. We have 
\[
\begin{split}
 &\brac{\eta(x) u(x) - \eta(y) u(y)} \brac{\varphi(x) -\varphi(y)}\\
 &\quad=(u(x)-u(y)) (\eta(x) \varphi(x) - \eta(y) \varphi(y)) +(\eta(x)-\eta(y)) \brac{u(y) \varphi(x) - u(x) \varphi(y)}.
 \end{split}
\]
Since $\eta \varphi \in C_c^\infty(D')$ it is an admissible test function and we have from the equation \eqref{eq:eqonD}
\[
\begin{split}
 &\int_{D} \int_{D} \frac{(v(x)-v(y)) (\varphi(x)-\varphi(y))}{|x-y|^{2}}\dx \dy\\
 &\quad = \int_{D} \int_{D} \Omega(x,y) d_{\frac{1}{2}} u(x,y) \eta(x)\varphi(x)\, \frac{\dx \dy}{|x-y|} + \int_{\R} f \eta \varphi +\mathcal{G}_1(u,\varphi).
 \end{split}
\]
Here, 
\[
 \mathcal{G}_1(u,\varphi) = \int_{D} \int_{D} \frac{(\eta(x)-\eta(y)) \brac{u(y) \varphi(x) - u(x) \varphi(y)}}{|x-y|^{2}}{\dx \dy}.
\]
Moreover, we have 
\[
\begin{split}
 &\int_{\R} \int_{\R} \frac{(v(x)-v(y)) (\varphi(x)-\varphi(y))}{|x-y|^{2}}{\dx \dy}\\
 &\quad = \int_{D} \int_{D} \frac{(v(x)-v(y)) (\varphi(x)-\varphi(y))}{|x-y|^{2}}{\dx \dy} + \mathcal{G}_2(u,\varphi),
\end{split}
 \]
where, because $\supp v \subset D_1$,
\[
\begin{split}
\mathcal{G}_2(u,\varphi) = 2\int_{D_1} v(x) \int_{\R\setminus D} \frac{ (\varphi(x)-\varphi(y))}{|x-y|^{2}}{\change{ \dy \dx}}.
\end{split}
\]
That is we have
\[
\begin{split}
 &\int_{\R} \int_{\R} \frac{(v(x)-v(y)) (\varphi(x)-\varphi(y))}{|x-y|^{2}}{\dx \dy}\\
 &\quad= \int_{D} \int_{D} \Omega(x,y) d_{\frac{1}{2}} u(x,y) \eta(x)\varphi(x)\, \frac{\dx \dy}{|x-y|} + \int_{\R} f \eta \varphi
 +\mathcal{G}_1(u,\varphi)+\mathcal{G}_2(u,\varphi).
 \end{split}
\]
Furthermore, since 
\[
 d_{\frac{1}{2}}u(x,y) \eta(x) = d_{\frac{1}{2}} (\eta u)(x,y) - u(y) d_{\frac{1}{2}}\eta(x,y)
\]
and $\supp v\subset D_1$, we have
\[
\begin{split}
  &\int_{D} \int_{D} \Omega(x,y) d_{\frac{1}{2}} u(x,y) \eta(x)\varphi(x)\, \frac{\dx \dy}{|x-y|}\\
  &=\int_{D} \int_{D} \Omega(x,y) d_{\frac{1}{2}} v(x,y)\, \varphi(x)\, \frac{\dx \dy}{|x-y|}
  -\int_{D} \int_{D} \Omega(x,y) u(y) d_{\frac{1}{2}}\eta(x,y)\varphi(x)\, \frac{\dx \dy}{|x-y|}\\
&=\int_{\R} \int_{\R} \chi_{D_2}(x) \chi_{D_2}(y)\Omega(x,y) d_{\frac{1}{2}} v(x,y)\, \varphi(x)\, \frac{\dx \dy}{|x-y|}\\
&\quad+\int_{D \setminus D_2} \int_{D_2} \Omega(x,y) d_{\frac{1}{2}} v(x,y)\, \varphi(x)\, \frac{\dx \dy}{|x-y|}\\
&\quad+\int_{D_2} \int_{D \setminus D_2} \Omega(x,y) d_{\frac{1}{2}} v(x,y)\, \varphi(x)\, \frac{\dx \dy}{|x-y|}\\
  &\quad-\int_{D} \int_{D} \Omega(x,y) u(y) d_{\frac{1}{2}}\eta(x,y)\varphi(x)\, \frac{\dx \dy}{|x-y|}.
  \end{split}
\]
So if we set 
\[
 \mathcal{G}_3(u,\varphi) \coloneqq \int_{D \setminus D_2} \int_{D_2} \Omega(x,y) d_{\frac{1}{2}} v(x,y)\, \varphi(x)\, \frac{\dx \dy}{|x-y|} + \int_{D_2} \int_{D \setminus D_2} \Omega(x,y) d_{\frac{1}{2}} v(x,y)\, \varphi(x)\, \frac{\dx \dy}{|x-y|}\\
\]
and
\[
 \mathcal{G}_4(u,\vp) \coloneqq -\int_{D} \int_{D} \Omega(x,y) u(y) d_{\frac{1}{2}}\eta(x,y)\varphi(x)\, \frac{\dx \dy}{|x-y|},
\]
then we have shown for any $\varphi \in C_c^\infty(\R)$,
\[
\begin{split}
 \int_{\R} \int_{\R} \frac{(v(x)-v(y)) (\varphi(x)-\varphi(y))}{|x-y|^{2}}{\dx \dy}
 = \int_{\R} \tilde{\Omega}\cdot d_{\frac{1}{2}} v \, \varphi + \int_{\R} f \eta \varphi+\sum_{i=1}^4\mathcal{G}_i(u,\varphi).
 \end{split}
\]
It remains to estimate each $\mathcal{G}_i(u,\varphi)$.

\underline{Estimate of $\mathcal{G}_1$:}
By the support of $\eta$ we have
\begin{equation}\label{eq:G1split}
\begin{split}
 \mathcal {G}_1(u,\vp) 
 &= \int_{D_2} \int_{D_2} \frac{(\eta(x)-\eta(y)) \brac{u(y) \varphi(x) - u(x) \varphi(y)}}{|x-y|^{2}}{\dx \dy}\\
 &\quad+ 2 \int_{D_1}\int_{D\setminus D_2}\frac{(\eta(x)-\eta(y)) \brac{u(y) \varphi(x) - u(x) \varphi(y)}}{|x-y|^{2}}{\dx \dy}.
 \end{split}
\end{equation}

As for the first term we have
\begin{equation}\label{eq:firstermfirst}
\begin{split}
 &\int_{D_2} \int_{D_2} \frac{(\eta(x)-\eta(y)) \brac{u(y) \varphi(x) - u(x) \varphi(y)}}{|x-y|^{2}}{\dx \dy}\\
 &\leq \|\eta\|_{\lip} \int_{D_2} \int_{D_2} \frac{|u(y) \varphi(x)-u(x)\varphi(y)|}{|x-y|}{\dx \dy}\\
 &\aleq\|\eta\|_{\lip} \brac{\int_{D_2} |u(y)|\int_{D_2} \frac{|\varphi(x)-\varphi(y)|}{|x-y|}{\dx \dy}+ \int_{D_2} |\varphi(y)|\int_{D_2} \frac{|u(x)-u(y)|}{|x-y|}{\dx \dy}}\\
 &\aleq\|\eta\|_{\lip} \int_{D_2} |u(y)-(u)_{D_2}|\int_{D_2} \frac{|\varphi(x)-\varphi(y)|}{|x-y|}{\dx \dy}\\
 &\quad +\|\eta\|_{\lip}\|u\|_{L^1(D_2)} \int_{D_2} \int_{D_2} \frac{|\varphi(x)-\varphi(y)|}{|x-y|}{\dx \dy}\\
 &\quad+\|\eta\|_{\lip} \int_{D_2} |\varphi(y)|\int_{D_2} \frac{|u(x)-u(y)|}{|x-y|}{\dx \dy}.
 \end{split}
\end{equation}
We observe that for any $p \in (1,\infty)$ and any $\eps > 0$ we have
\[
\begin{split}
 \int_{D_2}\brac{\int_{D_2} \frac{|\varphi(x)-\varphi(y)|}{|x-y|} \dx}^p \dy
 &=\int_{D_2}\brac{\int_{D_2} \frac{|\varphi(x)-\varphi(y)|}{|x-y|^\eps}  |x-y|^{\eps}\frac{\dx}{|x-y|} }^p \dy\\
 &\aleq [\varphi]_{W^{\eps,p}(D_2)}\,\change{\sup_{y\in D_2}} {\brac{\int_{D_2} |x-y|^{\eps p'} \frac{\dx}{|x-y|} }^{\frac{p}{p'}} }\\
 &\aleq C(D_2)[\varphi]_{W^{\eps,p}(D_2)}.
 \end{split}
\]
Thus, for any $\eps>0$ and any $s\in(0,\frac12)$ we have
\begin{equation}\label{eq:firsttermsecond}
\begin{split}
&\int_{D_2} |u(y)-(u)_{D_2}|\int_{D_2} \frac{|\varphi(x)-\varphi(y)|}{|x-y|}{\dx \dy} + \|u\|_{L^1(D_2)} \int_{D_2} \int_{D_2} \frac{|\varphi(x)-\varphi(y)|}{|x-y|}{\dx \dy}\\
&\aleq C(D_2)\brac{\|u-(u)_{D_2}\|_{L^{\frac{2}{1-2s}}(D_2)}[\varphi]_{W^{\eps,\frac{2}{2s+1}}(D_2)} +  \|u\|_{L^1(D_2)} [\varphi]_{W^{\eps,\frac{2}{2s+1}}(D_2)}}.
\end{split}
\end{equation}
We also have
\begin{equation}\label{eq:firsttermthird}
 \int_{D_2} |\varphi(y)|\int_{D_2} \frac{|u(x)-u(y)|}{|x-y|}{\dx \dy} \aleq \|\varphi\|_{L^2(D_2)}\, [u]_{W^{s,2}(D_2)}. 
\end{equation}
Combining \eqref{eq:firstermfirst} with \eqref{eq:firsttermsecond} \change{(in which we use Poincar\`{e} inequality)} and \eqref{eq:firsttermthird} we obtain
\begin{equation}\label{eq:firstermfinal}
\begin{split}
 &\int_{D_2} \int_{D_2} \frac{(\eta(x)-\eta(y)) \brac{u(y) \varphi(x) - u(x) \varphi(y)}}{|x-y|^{2}}{\dx \dy}\\
 &\quad \aleq \|\eta\|_{\lip}\brac{\|u\|_{L^1(D_2)} + [u]_{W^{s,2}(D_2)}}\, \brac{\|\varphi\|_{L^2(D_2)} + [\varphi]_{W^{\eps,\frac{2}{2s+1}}(D_2)}}.
\end{split}
 \end{equation}

For the second term of \eqref{eq:G1split} observe that for $x \in D_1$ and $y \in D \setminus D_2$ we have $|x-y| \aeq 1 + |y|$, so we have 
\begin{equation}\label{eq:G1estimatesecondterm}
\begin{split}
 &2\abs{\int_{D_1} \int_{D \setminus D_2} \frac{(\eta(x)-\eta(y)) \brac{u(y) \varphi(x) - u(x) \varphi(y)}}{|x-y|^{2}}\change{\dy \dx}}\\
 &\quad\aleq\|\eta\|_{L^\infty}\abs{\int_{D_1} \int_{D \setminus D_2} \frac{\abs{u(y)} \abs{\varphi(x)} + \abs{u(x)} \abs{\varphi(y)} }{1+|y|^{2}}\change{\dy \dx }}\\
 &\quad\aleq \|\eta\|_{L^\infty}\|u\|_{L^1+L^\infty(D)}\, \|\varphi\|_{L^1+L^\infty(D)}.
 \end{split}
\end{equation}

Thus, by \eqref{eq:G1split}, \eqref{eq:firstermfinal}, and \eqref{eq:G1estimatesecondterm} we get
\begin{equation}\label{eq:G1final}
 |\mathcal{G}_1(u,\vp)| \aleq \brac{\|u\|_{L^1+L^\infty(D)}+[u]_{W^{s,2}(D_2)}}\brac{\|\varphi\|_{L^2+L^\infty(D)} + [\varphi]_{W^{\eps,\frac{2}{2s+1}}(D_2)}}.
\end{equation}

\underline{Estimate of $\mathcal{G}_2$:}
Similarly as in \eqref{eq:G1estimatesecondterm}, if $x \in D_1$ and $y \in \R \setminus D$ we have $|x-y| \aeq 1+|y|$, and thus
\[
|\mathcal{G}_2(u,\varphi)| \aleq \|\eta u\|_{L^2(D)}\brac{\|\varphi\|_{L^2(\change{D_1})} + \|\varphi\|_{L^1\change{+L^\infty}(\R)}} \aleq \|u\|_{L^2(D_1)}\brac{\|\varphi\|_{\change{L^2+L^\infty}(D)} + \|\varphi\|_{L^1 + L^\infty(\R)}}.
\]
\underline{Estimate of $\mathcal{G}_3$:}
Using the support of $v$, observing again that $|x-y| \ageq 1+|y|$ if $y \in \R \setminus D_2$ and $x \in D_1$ \change{ we get}
\[
\begin{split}
 &|\mathcal{G}_3(u,\varphi)|\\ 
 &\aleq \|\Omega\|_{L^2(\Ep^1_{od} D)}
  \brac{
 \int_{D \setminus D_2} \int_{D_1} |u(x)|^2\, |\varphi(x)|^2\, \frac{\dx \dy}{1+|y|^{2}} + \int_{D_1} \int_{D \setminus D_2} |u(y)|^2 |\varphi(x)|^2\, \frac{\dx \dy}{1+|x|^{2}}}^{\frac{1}{2}}\\
 &\aleq\|\Omega\|_{L^2(\Ep^1_{od} D)}\, \brac{\|u\varphi\|_{L^2(D_1)} + \|\varphi\|_{L^2\change{+L^\infty}(D)}\, \|u\|_{L^2(D_1)}}\\
 &\aleq\|\Omega\|_{L^2(\Ep^1_{od} D)}\, \brac{\|u\|_{L^1(D_1)} \|\varphi\|_{L^2(D_1)} + 
 \|u-(u)_{D_1}\|_{L^{\frac{2}{1-2s}}(D_1)}\, \|\varphi\|_{L^{\frac{1}{s}}(D_1)}
 + \|\varphi\|_{L^2\change{+L^\infty}(D)}\, \|u\|_{L^2(D_1)}}\\
 &\aleq\|\Omega\|_{L^2(\Ep^1_{od} D)}\, \brac{\|u\|_{L^1(D_1)} \|\varphi\|_{L^2(D_1)} + 
 [u]_{W^{s,2}(D_1)}\, \|\varphi\|_{L^{\frac{1}{s}}(D_1)}
 + \|\varphi\|_{L^2\change{+L^\infty}(D)}\, \|u\|_{L^2(D_1)}}\\
 &\aleq\|\Omega\|_{L^2(\Ep^1_{od} D)}\, \brac{\|u\|_{L^2(D_1)} + 
 [u]_{W^{s,2}(D_1)}} \brac{\|\varphi\|_{L^{\frac{1}{s}}(D_1)}
 + \|\varphi\|_{L^2\change{+L^\infty}(D)}}.
 \end{split}
\]
This argument works for any $s \in (0,\frac{1}{2})$.

\underline{Estimate of $\mathcal{G}_4$:}
We have 
\[
 |\mathcal{G}_4(u,\varphi)| \aleq \|\Omega\|_{L^2(\Ep^1_{od} D)} \brac{\int_{D} \int_{D} |u(y) d_{\frac{1}{2}}\eta(x,y)\varphi(x)|^2\, \frac{\dx \dy}{|x-y|}}^{\frac{1}{2}}.
\]
Now observe that $|d_{\frac{1}{2}} \eta(x,y)|^2 \leq \|\eta\|_{\lip}^2 |x-y|$, thus 
\[
 \brac{\int_{D} \int_{D} |u(y) d_{\frac{1}{2}}\eta(x,y)\varphi(x)|^2\, \frac{\dx \dy}{|x-y|}}^{\frac{1}{2}} \aleq \|u\|_{L^2(D)}\, \|\varphi\|_{L^2(D)}.
\]
On the other hand 
\[
 \brac{\int_{D} \int_{D} |u(y) d_{\frac{1}{2}}\eta(x,y)\varphi(x)|^2\, \frac{\dx \dy}{|x-y|}}^{\frac{1}{2}} \aleq [\eta]_{W^{\frac{1}{2},2}}\|u\|_{L^\infty(D)}\, \|\varphi\|_{L^\infty(D)}.
\]
We also have 
\[
 \brac{\int_{D} \int_{D} |u(y) d_{\frac{1}{2}}\eta(x,y)\varphi(x)|^2\, \frac{\dx \dy}{|x-y|}}^{\frac{1}{2}} \aleq \|u\|_{L^\infty(D)}\, \|\varphi\|_{L^2(D)}\, \sup_{x\in D} \brac{\int_{D} \frac{|\eta(x)-\eta(y)|^2}{|x-y|^2}\dy }^{\frac{1}{2}}
\]
and
\[
 \sup_{x\in D} \brac{\int_{D} \frac{|\eta(x)-\eta(y)|^2}{|x-y|^2}\dy }^{\frac{1}{2}} \aleq \|\eta\|_{\lip}.
\]
Thus combining the estimates on $\mathcal{G}_4$ we obtain
\[
 |\mathcal{G}_4(u,\varphi)| \aleq \|\Omega\|_{L^2(\Ep^1_{od} D)}  \|u\|_{L^2+L^\infty(D)} \|\varphi\|_{L^2+L^\infty(D)}.
\]

\end{proof}

\section{A Sobolev inequality}\label{s:weirdsob}
\begin{theorem}\label{th:fspqchar}
Let $s \in (0,1)$, $p,q \in (1,\infty)$ and $f \in L^p(\R^n)$ then
\begin{enumerate}
\item \[
         [f]_{\dot{F}^s_{p,q}(\R^n)} \aleq [f]_{W^{s}_{p,q}(\R^n)};
      \]
 \item if $p > \frac{nq}{n+sq}$ then
    \[
        [f]_{W^{s}_{p,q}(\R^n)} \aleq [f]_{\dot{F}^s_{p,q}(\R^n)}.
       \]
\end{enumerate}
The constants depend on $s,p,q,n$ and are otherwise uniform.
\end{theorem}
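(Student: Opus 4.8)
The plan is to pass to the standard Littlewood--Paley description of homogeneous Triebel--Lizorkin spaces and then establish the two inequalities separately. Fix a Littlewood--Paley family $(\Delta_j)_{j\in\Z}$, $\Delta_j f=\psi_j\ast f$ with $\psi_j(z)=2^{jn}\psi(2^jz)$, $\psi\in\mathcal S(\R^n)$ and $\widehat\psi$ supported in a fixed annulus away from the origin; then $\psi$ has vanishing integral and $[f]_{\dot F^s_{p,q}(\R^n)}\aeq\big\|\big(\sum_{j}2^{jsq}|\Delta_jf|^q\big)^{1/q}\big\|_{L^p(\R^n)}$ (see \cite{RS96}); by a standard approximation it suffices to prove the a priori estimates for $f\in\mathcal S(\R^n)$. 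Split the difference integral dyadically, $|\mathcal D_{s,q}f|(x)^q=\sum_{m\in\Z}G_m(x)^q$ with $G_m(x)=\big(\int_{2^m\le|z|<2^{m+1}}\frac{|f(x)-f(x-z)|^q}{|z|^{n+sq}}\dif z\big)^{1/q}$, so that $[f]_{W^s_{p,q}(\R^n)}=\||\mathcal D_{s,q}f|\|_{L^p}$.

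For part (1), $[f]_{\dot F^s_{p,q}}\aleq[f]_{W^s_{p,q}}$ (no restriction on $p$): I use the vanishing moment of $\psi_j$ to write $\Delta_jf(x)=\int_{\R^n}\psi_j(z)(f(x-z)-f(x))\dif z$, cut the $z$-integral into the shells $\{2^m\le|z|<2^{m+1}\}$, bound $|\psi_j(z)|\aleq 2^{jn}(1+2^{j+m})^{-M}$ by Schwartz decay, apply H\"older in $z$ with exponents $q,q'$ on each shell, and recognise $G_m(x)$. After the bookkeeping of exponents this gives the pointwise bound $2^{js}|\Delta_jf(x)|\aleq\sum_{l\in\Z}e_l\,G_{l-j}(x)$ with $e_l=2^{l(n+s)}(1+2^l)^{-M}$, which is summable once $M>n+s$. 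Taking $\ell^q$-norms in $j$, Young's inequality for sequences yields $\big(\sum_j 2^{jsq}|\Delta_jf(x)|^q\big)^{1/q}\aleq|\mathcal D_{s,q}f|(x)$ pointwise, and then $[f]_{\dot F^s_{p,q}}\aleq[f]_{W^s_{p,q}}$.

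For part (2), $[f]_{W^s_{p,q}}\aleq[f]_{\dot F^s_{p,q}}$ under $p>\tfrac{nq}{n+sq}$, I decompose $f=\sum_k\Delta_kf$ and estimate the increments $\Delta_k f(x)-\Delta_k f(x-h)$ scale by scale. For $2^k|h|\le1$ a gradient bound (Bernstein for the band-limited $\nabla\Delta_kf$, plus a one-wavelength sup estimate) gives $|\Delta_kf(x)-\Delta_kf(x-h)|\aleq 2^k|h|\,\mathcal M(|\widetilde\Delta_kf|^t)(x)^{1/t}$ for any $t>0$, with $\widetilde\Delta_k$ a slightly enlarged Littlewood--Paley projection; integrating against $|h|^{-n-sq}\dif h$ on $\{|h|\le2^{-k}\}$ (which converges because $s<1$) contributes $\aleq 2^{ksq}\mathcal M(|\widetilde\Delta_kf|^t)(x)^{q/t}$. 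For $2^k|h|>1$ I use $|\Delta_kf(x)-\Delta_kf(x-h)|\le|\Delta_kf(x)|+|\Delta_kf(x-h)|$ together with the tempered annular estimate
\[
\int_{|z-x|>2^{-k}}\frac{|\Delta_kf(z)|^q}{|z-x|^{n+sq}}\dif z\aleq 2^{ksq}\,\mathcal M(|\widetilde\Delta_kf|^t)(x)^{q/t},
\]
proved by splitting into shells $|z-x|\aeq 2^{l-k}$, $l\ge0$, controlling $\int_{B_{2^{l-k}}(x)}|\Delta_kf|^q$ by a Nikol'skii-type inequality for the band-limited $\Delta_kf$, and summing the resulting geometric series --- which converges \emph{exactly when} $t>\tfrac{nq}{n+sq}$. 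Adding the two regimes one obtains, for every $j$ and $x$, $\int\frac{|\Delta_jf(x)-\Delta_jf(x-h)|^q}{|h|^{n+sq}}\dif h\aleq 2^{jsq}\mathcal M(|\widetilde\Delta_jf|^t)(x)^{q/t}$; exploiting that $h\mapsto\Delta_jf(x-h)$ is band-limited at $2^j$ to pass from $|f(x)-f(x-h)|^q$ to the $\ell^q$-sum over $j$ of these pieces (a Littlewood--Paley inequality in the $x$-variable, for each fixed $h$) yields the pointwise bound $|\mathcal D_{s,q}f|(x)\aleq\big(\sum_j 2^{jsq}\mathcal M(|\widetilde\Delta_jf|^t)(x)^{q/t}\big)^{1/q}$. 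Finally, the Fefferman--Stein vector-valued maximal inequality with exponents $(p/t,q/t)$ --- admissible because one may choose $t$ with $\tfrac{nq}{n+sq}<t<\min(p,q)$, the lower bound being available precisely since $p>\tfrac{nq}{n+sq}$ --- gives $\||\mathcal D_{s,q}f|\|_{L^p}\aleq\big\|\big(\sum_j 2^{jsq}|\Delta_jf|^q\big)^{1/q}\big\|_{L^p}=[f]_{\dot F^s_{p,q}}$.

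The main obstacle is part (2): the sharp hypothesis $p>\tfrac{nq}{n+sq}$ must be squeezed out of the lower bound, and a naive Peetre-maximal estimate in the regime $2^k|h|>1$ is too wasteful --- it would yield the argument only under the stronger assumption $s>\max(n/p,n/q)$. What saves the sharp range is the band-limited character of $\Delta_kf$ when averaging $|\Delta_kf|^q$ over balls much larger than one wavelength (the Nikol'skii-type inequality above), which makes the geometric series in the tempered annular estimate converge for all $t>\tfrac{nq}{n+sq}$ and hence leaves room to apply Fefferman--Stein with an exponent $t<\min(p,q)$. (When $q=p$ the restriction is vacuous, consistently with the classical identity $[f]_{W^{s,p}}\aeq[f]_{\dot W^{s,p}}=[f]_{\dot F^s_{p,p}}$; it only becomes effective when $p<q$.)
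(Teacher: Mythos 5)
The paper does not actually prove this theorem: it is quoted from Prats--Saksman \cite{Prats-Saksman} (with remarks about earlier special cases), so your argument is an independent attempt. Part~(1) is correct and standard (vanishing moment of $\psi_j$, dyadic shell decomposition of the $z$-integral, H\"older on shells, Young on $\Z$). Part~(2), however, has a genuine gap at the summation step. The block estimate you establish,
\[
\int_{\R^n}\frac{|\Delta_jf(x)-\Delta_jf(x-h)|^q}{|h|^{n+sq}}\,dh\aleq 2^{jsq}\,\mathcal M\bigl(|\widetilde\Delta_jf|^t\bigr)(x)^{q/t},\qquad t>\tfrac{nq}{n+sq},
\]
is correct, and you correctly locate where the sharp threshold arises (the geometric series in the Nikol'skii/annular estimate). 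But the passage ``from $|f(x)-f(x-h)|^q$ to the $\ell^q$-sum over $j$ of these pieces (a Littlewood--Paley inequality in the $x$-variable, for each fixed $h$)'' does not work: no Littlewood--Paley inequality produces a pointwise-in-$x$ bound, and for $q>1$ one simply cannot bound $\bigl|\sum_j a_j\bigr|^q$ by $\sum_j|a_j|^q$; nor are the summands $h\mapsto\Delta_jf(x)-\Delta_jf(x-h)$ almost-orthogonal in $L^q(|h|^{-n-sq}\,dh)$ --- your own tempered annular estimate shows that $|h|\gg 2^{-j}$ contributes at the same order $2^{jsq}$ as $|h|\approx 2^{-j}$, so there is no scale separation in $h$.

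The correct way to carry out the summation is the one you gesture at but do not implement: before integrating in $h$, split the $j$-sum at the threshold $2^j\approx|h|^{-1}$; use the Bernstein/gradient bound for $2^j|h|\le1$ and $|\Delta_jf(x)|+|\Delta_jf(x-h)|$ for $2^j|h|>1$; and apply H\"older in $j$ with geometric weights $2^{\pm j(1-s)\eps}$ (low frequencies), resp.\ $2^{\mp js\eps}$ (high frequencies), for small $\eps>0$, to convert the $\ell^1_j$-sums into $\ell^q_j$-sums; the resulting loss factors $|h|^{-\eps(1-s)}$, $|h|^{\eps s}$ are then absorbed by the $|h|$-integral. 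This produces the pointwise bound $|\mathcal D_{s,q}f|(x)\aleq\bigl(\sum_j2^{jsq}\mathcal M(|\widetilde\Delta_jf|^t)(x)^{q/t}\bigr)^{1/q}$, but the high-frequency tempered estimate now runs with exponent $s(1-\eps)$ and so requires $t>nq/\bigl(n+s(1-\eps)q\bigr)$; the hypothesis $p>nq/(n+sq)$ of the theorem only guarantees an admissible pair $(t,\eps)$ after choosing $\eps$ small depending on $p$. With that repair, the Fefferman--Stein step at the end is fine as you state it.
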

While characterizations such as \Cref{th:fspqchar} are well-known for Besov spaces, for Triebel spaces this seems to have been known only for $q=p$ (where it follows from the Besov-space characterization), $q=2$ where it is a result due to Stein and Fefferman, \cite{Stein1961,F70}. It was also known ``for large s'' \cite[Section 2.5.10]{1983functionspaces}. \change{Although a conjecture that} \Cref{th:fspqchar} holds is very natural, quite surprisingly, to the best of our knowledge, the first time \Cref{th:fspqchar} has been proven was recently by \change{Prats and Saksman \cite[Theorem 1.2]{Prats-Saksman} (see also \cite{P19} for further development)}, \change{but see also \cite{S89,T89}}.

\begin{corollary}\label{co:DsSobinequ}
Let $s \in (0,1)$, $t \in (s,1)$ and $p,p^\ast \in (1,\infty)$ where 
\begin{equation}\label{eq:constantslabel}
 s-\frac{n}{p^\ast} = t-\frac{n}{p}.
\end{equation}
If $q \in (1,\infty)$ such that $p^\ast > \frac{nq}{n+sq}$ we have
\[
 \||\mathcal{D}_{s,q} f|\|_{L^{p^\ast}(\R^n)} \aleq \|\laps{t} f\|_{L^p(\R^n)}.
\]
More precisely, in terms of Lorentz spaces we have for any $r \in [1,\infty]$,
\[
 \||\mathcal{D}_{s,q} f|\|_{L^{(p^\ast,r)}(\R^n)} \aleq \|\laps{t} f\|_{L^{(p,r)}(\R^n)}.
\]
\end{corollary}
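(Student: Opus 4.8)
The plan is to deduce \Cref{co:DsSobinequ} from \Cref{th:fspqchar} together with the Sobolev embedding theorem for homogeneous Triebel--Lizorkin spaces, and then to upgrade from Lebesgue to Lorentz exponents by real interpolation.

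First I would record the arithmetic of the exponents. Since $t>s$, the relation \eqref{eq:constantslabel} is equivalent to $\frac1{p^\ast}=\frac1p-\frac{t-s}{n}$; in particular $1<p<p^\ast<\infty$, and the hypothesis $p^\ast>\frac{nq}{n+sq}$ is precisely the restriction occurring in part~(2) of \Cref{th:fspqchar} applied with integrability exponent $p^\ast$. Since by definition $[f]_{W^s_{p^\ast,q}(\R^n)}=\||\mathcal D_{s,q}f|\|_{L^{p^\ast}(\R^n)}$, \Cref{th:fspqchar}(2) gives
\[
\||\mathcal D_{s,q}f|\|_{L^{p^\ast}(\R^n)}=[f]_{W^s_{p^\ast,q}(\R^n)}\aleq[f]_{\dot F^{s}_{p^\ast,q}(\R^n)}.
\]
Next I would invoke the Sobolev embedding for homogeneous Triebel--Lizorkin spaces: if $0<p_0<p_1<\infty$ and $\sigma_0-\frac n{p_0}=\sigma_1-\frac n{p_1}$, then $\dot F^{\sigma_0}_{p_0,u_0}(\R^n)\hookrightarrow\dot F^{\sigma_1}_{p_1,u_1}(\R^n)$ for \emph{arbitrary} fine indices $u_0,u_1$ (see e.g.\ Triebel, \emph{Theory of Function Spaces}; unlike for Besov spaces, no condition on the fine indices is needed once $p_0<p_1$). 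Applying this with $(\sigma_0,p_0,u_0)=(t,p,2)$ and $(\sigma_1,p_1,u_1)=(s,p^\ast,q)$ --- admissible by the exponent relation above --- and then using that $\laps t$ is an isomorphism $\dot F^{t}_{p,2}(\R^n)\to\dot F^{0}_{p,2}(\R^n)=L^p(\R^n)$ for $1<p<\infty$, one obtains
\[
[f]_{\dot F^{s}_{p^\ast,q}(\R^n)}\aleq[f]_{\dot F^{t}_{p,2}(\R^n)}\aeq\|\laps t f\|_{L^p(\R^n)},
\]
and chaining the two displays proves the first assertion of the Corollary.

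For the Lorentz refinement I would argue by interpolation. Keeping $s,t,q$ fixed, for every $\tilde p$ in a small open interval around $p$ the conjugate exponent $\tilde p^{\,\ast}$ defined by $\frac1{\tilde p^{\,\ast}}=\frac1{\tilde p}-\frac{t-s}n$ still lies in $(1,\infty)$ and still satisfies $\tilde p^{\,\ast}>\frac{nq}{n+sq}$, so the inequality just proved holds for every such pair $(\tilde p,\tilde p^{\,\ast})$. Writing $g:=\laps t f$ and $f=\laps{-t}g$ (up to a constant, which does not affect $\mathcal D_{s,q}f$), the assignment $Sg:=|\mathcal D_{s,q}(\laps{-t}g)|$ is \emph{sublinear} --- because $\mathcal D_{s,q}$ is an $L^q$-norm in the second variable of a first difference, so $|\mathcal D_{s,q}(f_1+f_2)|\le|\mathcal D_{s,q}f_1|+|\mathcal D_{s,q}f_2|$ --- and the previous step says $S\colon L^{\tilde p}(\R^n)\to L^{\tilde p^{\,\ast}}(\R^n)$ is bounded. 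Choosing $p_0<p<p_1$ in that interval and $\theta\in(0,1)$ with $\frac1p=\frac{1-\theta}{p_0}+\frac\theta{p_1}$, the affine relation between $\frac1{(\cdot)}$ and $\frac1{(\cdot)^\ast}$ forces the \emph{same} $\theta$ to satisfy $\frac1{p^\ast}=\frac{1-\theta}{p_0^\ast}+\frac\theta{p_1^\ast}$; real interpolation of the sublinear operator $S$, together with $(L^{p_0},L^{p_1})_{\theta,r}=L^{(p,r)}$ and $(L^{p_0^\ast},L^{p_1^\ast})_{\theta,r}=L^{(p^\ast,r)}$, then yields $\||\mathcal D_{s,q}f|\|_{L^{(p^\ast,r)}(\R^n)}\aleq\|\laps t f\|_{L^{(p,r)}(\R^n)}$ for all $r\in[1,\infty]$. (Alternatively, one could run the entire argument with Lorentz-based Triebel--Lizorkin spaces.)

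The step I expect to carry the real weight is the first one, namely \Cref{th:fspqchar}: identifying the \emph{pointwise} square-function-type quantity $|\mathcal D_{s,q}f|$ with a genuine Triebel--Lizorkin seminorm. This is exactly where the restriction $p^\ast>\frac{nq}{n+sq}$ is indispensable, and it relies on the nontrivial, recent characterization of Prats--Saksman cited above; everything after it --- the Triebel--Lizorkin Sobolev embedding, the lift by $\laps t$, and the passage to Lorentz scales --- is classical, the only point needing mild care being that the dimensional balance $\frac1{p^\ast}=\frac1p-\frac{t-s}n$ is preserved along the interpolation so a single $\theta$ works on both source and target. One could instead bypass \Cref{th:fspqchar} and argue directly from the pointwise maximal-function inequality \eqref{eq:maximalfractional} together with Hedberg's truncation trick and Hardy--Littlewood--Sobolev, but then the large-scale part of the inner $y$-integral must be controlled by the extra input --- coming from the usual Sobolev inequality --- that $\laps t f\in L^p$ forces $f\in L^{np/(n-tp)}$ when $tp<n$, the hypothesis being precisely $q<np/(n-tp)$.
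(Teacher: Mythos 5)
Your argument is correct and follows essentially the same route as the paper's own proof: use \Cref{th:fspqchar} to identify $\||\mathcal{D}_{s,q} f|\|_{L^{p^\ast}}$ with the homogeneous Triebel--Lizorkin seminorm $[f]_{\dot F^s_{p^\ast,q}}$, apply the Sobolev embedding $\dot F^t_{p,2}\hookrightarrow \dot F^s_{p^\ast,q}$ together with the lifting $[f]_{\dot F^t_{p,2}}\aeq\|\laps{t}f\|_{L^p}$, and finally upgrade to Lorentz scales by real interpolation of the sublinear map after perturbing $p$. The only difference is that you spell out details the paper leaves terse (that only direction (2) of \Cref{th:fspqchar} is used, and the verification that a single $\theta$ works simultaneously on the source and target scales because $\frac1{p^\ast}-\frac1p$ is constant), which is fine and does not change the method.
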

\begin{proof}
From \Cref{th:fspqchar} we have 
\[
  \||\mathcal{D}_{s,q} f|\|_{L^{p^\ast}(\R^n)} \aeq [f]_{F^s_{p^\ast,q}(\R^n)}.
\]
We recall the Sobolev-embedding theorem for Triebel-Lizorkin spaces $\dot{F}^t_{p,\tilde{q}} \hookrightarrow \dot{F}^{\change{s}}_{\change{p^\ast},q}$ for any $q,\tilde{q} \in (1,\infty)$ 
\change{ and $s,\, t,\, p,\,p^\ast$ satisfying \eqref{eq:constantslabel} (see, e.g., \cite[Theorem 2.7.1 (ii)]{1983functionspaces})}. Thus,
\[
  \||\mathcal{D}_{s,q} f|\|_{L^{p^\ast}(\R^n)} \aleq [f]_{F^t_{p,2}(\R^n)} \aleq \|\laps{t} f\|_{L^{p}(\R^n)}.
\]
As for the Lorentz-space estimate we can argue by real interpolation. Indeed, fix $s,q,p,p^\ast$. Observe that $f \mapsto |\mathcal{D}_{s,q} f|$ is a sublinear operator.We can find $p_1 < p < p_2$ such that $p_1$ and $p_2$ are still admissible, and thus we have 
\[
  \||\mathcal{D}_{s,q} f|\|_{L^{p_i^\ast}(\R^n)} \aleq [f]_{F^t_{p,2}(\R^n)} \aleq \|\laps{t} f\|_{L^{p_i}(\R^n)} \quad i=1,2.
\]
From real interpolation we now obtain the Lorentz space claim.
\end{proof}

\section{A sequence of cut-off functions in the critical Sobolev space}\label{sc:0capacity}
For readers convenience we present here a proof of a well known result, which essentially says that in the critical Sobolev space a point has zero capacity. See for example \cite[Theorem 5.1.9]{Adams-Hedberg}, compare also with a similar construction \cite[Lemma 3.2]{Monteil-VanSchaftingen}.

\begin{lemma}\label{le:zerocapacityfunctions}
There exists a sequence of functions with the following properties:

$\{\zeta_\ell\}_{\ell \in \N} \subset C_c^\infty(\R,[0,1])$ and for all $\ell\in\N$ we have
\begin{equation}\label{eq:zetaproperties2}
 \zeta_\ell \equiv 1 \text{ on } B_{\rho_\ell}(x_0), \quad \zeta_\ell \equiv 0 \text{ outside } B_{R_\ell}(x_0), \quad \text{ and }\lim_{\ell\to\infty}[\zeta_\ell]_{W^{\frac12,2}(\R)}=0
\end{equation}
for a sequence of radii $0<\rho_\ell<R_\ell\to 0$ as $\ell\to\infty$. 
\end{lemma}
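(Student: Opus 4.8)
The plan is to exhibit explicit logarithmic cutoffs, bound their Gagliardo $W^{\frac12,2}$-seminorm, and then mollify to obtain smoothness. Without loss of generality I assume $x_0 = 0$ (translating at the very end). For parameters $0 < \rho < R$ I set the standard logarithmic cutoff
\[
 g_{\rho,R}(x) \coloneqq \begin{cases} 1 & |x| \le \rho,\\ \dfrac{\log(R/|x|)}{\log(R/\rho)} & \rho < |x| < R,\\ 0 & |x|\ge R, \end{cases}
\]
which is Lipschitz, valued in $[0,1]$, identically $1$ on $B_\rho$ and supported in $\overline{B_R}$. The crux of the whole proof is the estimate
\[
 [g_{\rho,R}]_{W^{\frac12,2}(\R)}^2 \aleq \frac{1}{\log(R/\rho)},
\]
after which the rest is bookkeeping.

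To prove this estimate I would invoke the elementary direction of the trace theorem, namely $[w]_{\dot{W}^{\frac12,2}(\R)} \aleq \|\nabla \Phi\|_{L^2(\R^2_+)}$ for \emph{any} (say compactly supported) $W^{1,2}(\R^2_+)$-extension $\Phi$ of $w$, and test it with $\Phi(z) \coloneqq \psi(|z|)$, where $\psi$ is the radial profile equal to $\tfrac{\log(R/r)}{\log(R/\rho)}$ on $(\rho,R)$ and to $1$, resp.\ $0$, for $r \le \rho$, resp.\ $r \ge R$. The restriction of $\Phi$ to $\R \times \{0\}$ is precisely $g_{\rho,R}$, and a polar-coordinate computation gives
\[
 \|\nabla\Phi\|_{L^2(\R^2_+)}^2 \le \|\nabla\Phi\|_{L^2(\R^2)}^2 = 2\pi\int_\rho^R \frac{r\,dr}{r^2(\log(R/\rho))^2} = \frac{2\pi}{\log(R/\rho)},
\]
so the estimate follows. (Alternatively one can estimate the Gagliardo double integral directly, splitting $\R\times\R$ according to whether $|x|$ and $|y|$ lie in $(0,\rho)$, $(\rho,R)$ or $(R,\infty)$; this yields $[g_{\rho,R}]_{W^{\frac12,2}}^2 \aleq (\log(R/\rho))^{-1} + \rho/R$. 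The harmonic-extension route is shorter, which is why I would organise the argument around it.)

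Finally I would choose $R_\ell \to 0$ and $\rho_\ell \to 0$ with $\log(R_\ell/\rho_\ell) \to \infty$ — for instance $R_\ell = 1/\ell$ and $\rho_\ell = e^{-\ell}/\ell$, so that $\log(R_\ell/\rho_\ell) = \ell$ — and pass from Lipschitz to $C_c^\infty$ by mollification: set $\zeta_\ell \coloneqq g_{\rho_\ell/2,\,2R_\ell} \ast \phi_{\delta_\ell}$ with $\phi_{\delta_\ell}$ a standard mollifier and $\delta_\ell < \rho_\ell/4$. Convolution with a probability density does not increase the $W^{\frac12,2}$-seminorm (Minkowski's integral inequality together with translation invariance of the seminorm), so $[\zeta_\ell]_{W^{\frac12,2}(\R)}^2 \aleq (\log(4R_\ell/\rho_\ell))^{-1} \to 0$, while $\zeta_\ell \in C_c^\infty(\R,[0,1])$ is $\equiv 1$ on $B_{\rho_\ell/4}(0)$ and $\equiv 0$ outside $B_{2R_\ell + \delta_\ell}(0)$; after relabelling the radii and translating back by $x_0$ this is exactly \eqref{eq:zetaproperties2}. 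The only genuinely non-routine ingredient is the logarithmic seminorm bound, and the extension trick reduces even that to a one-line integral; everything else (mollification estimates, the choice of radii, recentering) is routine.
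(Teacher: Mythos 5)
Your proof is correct and shares the same key ingredient as the paper's — the ``easy'' direction of the trace inequality $[w]_{W^{1/2,2}(\R)}\aleq\|\nabla\Phi\|_{L^2(\R^2_+)}$ applied to a two-dimensional logarithmic cutoff — but you execute it differently and arguably more transparently. The paper builds the cutoffs by truncating a fixed unbounded $W^{1,2}$ function, $\tilde Z_k=\min\{\max\{f-k,0\},1\}$ with $f=\log\log(1+|x|^{-2})$, and gets $\|\nabla\tilde Z_k\|_{L^2}\to 0$ softly from absolute continuity of the integral over the shrinking annuli $\{k\le f\le k+1\}$; it then glosses over the final smoothing. You instead write down the explicit radial profile $\psi(r)=\log(R/r)/\log(R/\rho)$, compute $\|\nabla\Phi\|_{L^2}^2=2\pi/\log(R/\rho)$ in one line, and obtain the quantitative bound $[g_{\rho,R}]^2_{W^{1/2,2}}\aleq 1/\log(R/\rho)$, after which the choice $R_\ell=1/\ell$, $\rho_\ell=e^{-\ell}/\ell$ is automatic. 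Your mollification step is also done carefully (Minkowski's integral inequality to show convolution does not increase the Gagliardo seminorm, a radius $\delta_\ell<\rho_\ell/4$ so the plateau and support are preserved), which is cleaner than the paper's single-sentence appeal to approximation. The two arguments are really the same idea — a point in $\R^2$ has zero $H^1$-capacity, transferred to $\R$ by trace — but yours has the advantage of an explicit rate, while the paper's has the advantage of not needing to choose parameters. Both are fine.
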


\begin{proof}
Let $f(x) = \log\log \brac{1 + \frac{1}{|x|^2}}\in W^{1,2}(B^2_1,\R)$ be an unbounded function. We define
\[
 \tilde{Z}_k(x) \coloneqq \left\{
 \begin{array}{ll}
  1 & \text{ if } f(x)\ge k+1,\\
  f(x) - k & \text{ if } k\le f(x)\le k+1,\\
  0 & \text{ if } f(x)< k.
 \end{array}
 \right.
\]
Then,
\[
 \nabla \tilde{Z}_k(x) \coloneqq \left\{
 \begin{array}{ll}
  0 & \text{ if } f(x)\ge k+1,\\
  \nabla f(x) & \text{ if } k\le f(x)\le k+1,\\
  0 & \text{ if } f(x)< k.
 \end{array}
 \right.
\]
The support of $\nabla \tilde{Z}_k $ is the set
\[
B_k\coloneqq \left\{x\in B_1^2\colon A_{k+1}\le |x| \le A_k\right\}, 
\]
where
\[
 A_k = \sqrt{\frac{1}{e^{e^{k}}-1}}, \quad A_{k+1}\le A_k,\quad \text{ and } \quad\lim_{k\to\infty} A_k =0.
\]
Now,
\[
 \int_{B_1}|\nabla \tilde{Z}_k|^2 \dif x= \int_{A_{k+1}\le |x|\le A_k}  |\nabla \tilde{Z}_k|^2 \dif x \xrightarrow{k\to\infty}0,
 \]
which follows from the fact that $\nabla \tilde{Z}_k\in L^2(B_1^2)$ and that $|\{x\in B_1^2\colon A_{k+1}\le |x| \le A_k\}|$ shrinks to zero.

Thus, we obtained a sequence of functions for which 
\[
 \tilde{Z}_k\equiv 1 \text{ on } B_{A_{k+1}},\quad \tilde{Z}_k\equiv 0 \text{ outside } B_{A_k}, \quad \text{ and } \lim_{k\to\infty }\|\nabla \tilde{Z}_k\|_{L^2(B^2_1)}=0.
\]
By extending by zero we obtain a sequence $Z_k\in W^{1,2}(\R^2_+)$ with the properties
\begin{equation}\label{eq:propertiesofZk}
 {Z}_k\equiv 1 \text{ on } B_{A_{k+1}},\quad {Z}_k\equiv 0 \text{ outside } B_{A_k}, \quad \text{ and } \lim_{k\to\infty }\|\nabla {Z}_k\|_{L^2(\R^2_+)}=0.
\end{equation}
Defining now $\zeta_k\coloneqq Z_k\big\rvert_{\R}$ in the trace sense we obtain by the trace inequality, \cite{Gagliardo} 
\[
 [\zeta_k]_{W^{\frac12,2}(\R)} \aleq \|\nabla Z_k\|_{L^2(\R^2_+)} \xrightarrow{k\to\infty}0.
\]
Approximating $\{\zeta_k\}_{k\in\N}$ by smooth functions we obtain the desired sequence.
\end{proof}

\bibliographystyle{abbrv}%
\bibliography{bib}%

\begin{thebibliography}{10}

\bibitem{Adams-Hedberg}
D.~R. Adams and L.~I. Hedberg.
\newblock {\em Function spaces and potential theory}, volume 314 of {\em
  Grundlehren der Mathematischen Wissenschaften [Fundamental Principles of
  Mathematical Sciences]}.
\newblock Springer-Verlag, Berlin, 1996.

\bibitem{BRS16}
S.~Blatt, P.~Reiter, and A.~Schikorra.
\newblock Harmonic analysis meets critical knots. {C}ritical points of the
  {M}\"{o}bius energy are smooth.
\newblock {\em Trans. Amer. Math. Soc.}, 368(9):6391--6438, 2016.

\bibitem{BRS19}
S.~Blatt, P.~Reiter, and A.~Schikorra.
\newblock {On O'hara knot energies I: Regularity for critical knots}.
\newblock {\em J.Diff.Geom. (Accepted)}, 2019.

\bibitem{Bojarski-Hajlasz-1993}
B.~Bojarski and P.~Haj{\l}asz.
\newblock Pointwise inequalities for {S}obolev functions and some applications.
\newblock {\em Studia Math.}, 106(1):77--92, 1993.

\bibitem{CLMS93}
R.~Coifman, P.-L. Lions, Y.~Meyer, and S.~Semmes.
\newblock Compensated compactness and {H}ardy spaces.
\newblock {\em J. Math. Pures Appl. (9)}, 72(3):247--286, 1993.

\bibitem{DL15}
F.~Da~Lio.
\newblock Compactness and bubble analysis for 1/2-harmonic maps.
\newblock {\em Ann. Inst. H. Poincar\'{e} Anal. Non Lin\'{e}aire},
  32(1):201--224, 2015.

\bibitem{DLR21}
F.~Da~Lio, P.~Laurain, and T.~Rivi{\`e}re.
\newblock { Neck energies for nonlocal systems with antisymmetric potentials},
  in preparation.

\bibitem{DLP20}
F.~Da~Lio and A.~Pigati.
\newblock Free boundary minimal surfaces: a nonlocal approach.
\newblock {\em Ann. Sc. Norm. Super. Pisa Cl. Sci. (5)}, 20(2):437--489, 2020.

\bibitem{DLR11b}
F.~Da~Lio and T.~Rivi\`ere.
\newblock Sub-criticality of non-local {S}chr\"{o}dinger systems with
  antisymmetric potentials and applications to half-harmonic maps.
\newblock {\em Adv. Math.}, 227(3):1300--1348, 2011.

\bibitem{DLR11}
F.~Da~Lio and T.~Rivi\`ere.
\newblock Three-term commutator estimates and the regularity of
  {$\frac12$}-harmonic maps into spheres.
\newblock {\em Anal. PDE}, 4(1):149--190, 2011.

\bibitem{Fall-2016}
M.~M. Fall.
\newblock Entire {$s$}-harmonic functions are affine.
\newblock {\em Proc. Amer. Math. Soc.}, 144(6):2587--2592, 2016.

\bibitem{F70}
C.~Fefferman.
\newblock Inequalities for strongly singular convolution operators.
\newblock {\em Acta Math.}, 124:9--36, 1970.

\bibitem{Gagliardo}
E.~Gagliardo.
\newblock Caratterizzazioni delle tracce sulla frontiera relative ad alcune
  classi di funzioni in {$n$} variabili.
\newblock {\em Rend. Sem. Mat. Univ. Padova}, 27:284--305, 1957.

\bibitem{Hajlasz-1996}
P.~Haj{\l}asz.
\newblock Sobolev spaces on an arbitrary metric space.
\newblock {\em Potential Anal.}, 5(4):403--415, 1996.

\bibitem{H90}
F.~H\'{e}lein.
\newblock R\'{e}gularit\'{e} des applications faiblement harmoniques entre une
  surface et une sph\`ere.
\newblock {\em C. R. Acad. Sci. Paris S\'{e}r. I Math.}, 311(9):519--524, 1990.

\bibitem{LR14}
P.~Laurain and T.~Rivi\`ere.
\newblock Angular energy quantization for linear elliptic systems with
  antisymmetric potentials and applications.
\newblock {\em Anal. PDE}, 7(1):1--41, 2014.

\bibitem{MS18}
K.~Mazowiecka and A.~Schikorra.
\newblock Fractional div-curl quantities and applications to nonlocal geometric
  equations.
\newblock {\em J. Funct. Anal.}, 275(1):1--44, 2018.

\bibitem{MS20}
K.~Mazowiecka and A.~Schikorra.
\newblock {Minimal $W^{s,\frac{n}{s}}$-harmonic maps in homotopy classes}.
\newblock {\em arXiv:2006.07138}, 2020.

\bibitem{MSY20}
T.~Mengesha, A.~Schikorra, and S.~Yeepo.
\newblock {Calderon-Zygmund type estimates for nonlocal PDE with H\"older
  continuous kernel}.
\newblock {\em arXiv: 2001.11944}, 2020.

\bibitem{MPS20}
V.~Millot, M.~Pegon, and A.~Schikorra.
\newblock Partial regularity for fractional harmonic maps into spheres.
\newblock {\em arXiv: 1909.11466}, 2020.

\bibitem{MS15}
V.~Millot and Y.~Sire.
\newblock On a fractional {G}inzburg-{L}andau equation and 1/2-harmonic maps
  into spheres.
\newblock {\em Arch. Ration. Mech. Anal.}, 215(1):125--210, 2015.

\bibitem{MSW19}
V.~Millot, Y.~Sire, and K.~Wang.
\newblock Asymptotics for the fractional {A}llen-{C}ahn equation and stationary
  nonlocal minimal surfaces.
\newblock {\em Arch. Ration. Mech. Anal.}, 231(2):1129--1216, 2019.

\bibitem{Monteil-VanSchaftingen}
A.~Monteil and J.~Van~Schaftingen.
\newblock Uniform boundedness principles for {S}obolev maps into manifolds.
\newblock {\em Ann. Inst. H. Poincar\'{e} Anal. Non Lin\'{e}aire},
  36(2):417--449, 2019.

\bibitem{M11}
R.~Moser.
\newblock Intrinsic semiharmonic maps.
\newblock {\em J. Geom. Anal.}, 21(3):588--598, 2011.

\bibitem{P19}
M.~Prats.
\newblock Measuring {T}riebel-{L}izorkin fractional smoothness on domains in
  terms of first-order differences.
\newblock {\em J. Lond. Math. Soc. (2)}, 100(2):692--716, 2019.

\bibitem{Prats-Saksman}
M.~Prats and E.~Saksman.
\newblock A {${\rm T}(1)$} theorem for fractional {S}obolev spaces on domains.
\newblock {\em J. Geom. Anal.}, 27(3):2490--2538, 2017.

\bibitem{R07}
T.~Rivi\`ere.
\newblock Conservation laws for conformally invariant variational problems.
\newblock {\em Invent. Math.}, 168(1):1--22, 2007.

\bibitem{R12}
T.~Rivi\'{e}re.
\newblock The role of conservation laws in the analysis of conformally
  invariant problems.
\newblock In {\em Topics in modern regularity theory}, volume~13 of {\em CRM
  Series}, pages 117--167. Ed. Norm., Pisa, 2012.

\bibitem{RS96}
T.~Runst and W.~Sickel.
\newblock {\em Sobolev spaces of fractional order, {N}emytskij operators, and
  nonlinear partial differential equations}, volume~3 of {\em De Gruyter Series
  in Nonlinear Analysis and Applications}.
\newblock Walter de Gruyter \& Co., Berlin, 1996.

\bibitem{Sucks1}
J.~Sacks and K.~Uhlenbeck.
\newblock The existence of minimal immersions of {$2$}-spheres.
\newblock {\em Ann. of Math. (2)}, 113(1):1--24, 1981.

\bibitem{S18}
A.~Schikorra.
\newblock Boundary equations and regularity theory for geometric variational
  systems with {N}eumann data.
\newblock {\em Arch. Ration. Mech. Anal.}, 229(2):709--788, 2018.

\bibitem{S89}
A.~Seeger.
\newblock A note on {T}riebel-{L}izorkin spaces.
\newblock In {\em Approximation and function spaces ({W}arsaw, 1986)},
  volume~22 of {\em Banach Center Publ.}, pages 391--400. PWN, Warsaw, 1989.

\bibitem{Stein1961}
E.~M. Stein.
\newblock The characterization of functions arising as potentials.
\newblock {\em Bull. Amer. Math. Soc.}, 67(1):102--104, 01 1961.

\bibitem{1983functionspaces}
H.~Triebel.
\newblock {\em Theory of Function Spaces}.
\newblock Monographs in Mathematics. Springer Basel, 1983.

\bibitem{T89}
H.~Triebel.
\newblock Local approximation spaces.
\newblock {\em Z. Anal. Anwendungen}, 8(3):261--288, 1989.

\bibitem{U82}
K.~K. Uhlenbeck.
\newblock Connections with {$L^{p}$} bounds on curvature.
\newblock {\em Comm. Math. Phys.}, 83(1):31--42, 1982.

\end{thebibliography}
\end{document}